\newcommand{\mlt}[1]{ \begin{tabular}[c]{@{}c@{}}#1\end{tabular}}
\newcommand\TT{\rule{0pt}{2.9ex}}
\newcommand\TB{\rule[-1.5ex]{0pt}{0pt}}
\newcommand\TTTt{\rule{0pt}{2.50ex}}
\newcommand\TTt{\rule{0pt}{2.60ex}}
\newcommand\TTb{\rule[-1.7ex]{0pt}{0pt}}
\newcommand\TTT{\rule{0pt}{3.1ex}}
\newcommand\TTB{\rule[-2.3ex]{0pt}{0pt}}
\newcommand{\R}{\mathbb{R}}
\newcommand{\C}{\mathbb{C}}
\newcommand{\Z}{\mathbb{Z}}
\newcommand{\N}{\mathbb{N}}
\newcommand{\expe}{{\rm e}}
\newcommand\Ga{\Gamma}
\newcommand{\F}[3]{{}_2F_1\hspace{-0.05cm}\left(
 \genfrac{}{}{0pt}{}{#1}{#2};#3\right)}
\newcommand{\FF}{\mathbf{F}}
\renewcommand{\P}{{\sf P}}
\newcommand{\Q}{\mathsf{Q}}
\newcommand{\QQ}{\boldsymbol{Q}}
\renewcommand{\L}[2]{L^{#1}_{#2}}
\def\eqnarray{\stepcounter{equation}\let\@currentlabel=\theequation
\global\@eqnswtrue
\tabskip\@centering\let\\=\@eqncr
$$\halign to \displaywidth\bgroup\hfil\global\@eqcnt\z@
 $\displaystyle\tabskip\z@{##}$&\global\@eqcnt\@ne
 \hfil$\displaystyle{{}##{}}$\hfil
 &\global\@eqcnt\tw@ $\displaystyle{##}$\hfil
 \tabskip\@centering&\llap{##}\tabskip\z@\cr}
\def\endeqnarray{\@@eqncr\egroup
 \global\advance\c@equation\m@ne$$\global\@ignoretrue}
\def\@yeqncr{\@ifnextchar [{\@xeqncr}{\@xeqncr[5pt]}}
\numberwithin{equation}{section}
\newtheorem{thm}{Theorem}[section]
\newtheorem{Lemma}[thm]{Lemma}
 { \theoremstyle{definition}
\newtheorem{rem}[thm]{Remark}}
\begin{document}

\newcommand{\arXivNumber}{2009.07318}

\renewcommand{\PaperNumber}{053}

\FirstPageHeading

\ShortArticleName{Gauss Hypergeometric Representations of the Ferrers Function of the Second Kind}

\ArticleName{Gauss Hypergeometric Representations\\ of the Ferrers Function of the Second Kind}

\Author{Howard S.~COHL~$^{\rm a}$, Justin PARK~$^{\rm b}$ and Hans VOLKMER~$^{\rm c}$}

\AuthorNameForHeading{H.S.~Cohl, J.~Park and H.~Volkmer}

\Address{$^{\rm a)}$~Applied and Computational Mathematics Division,\\
\hphantom{$^{\rm a)}$}~National Institute of Standards and Technology,
Mission Viejo, CA 92694, USA}
\EmailD{\href{mailto:howard.cohl@nist.gov}{howard.cohl@nist.gov}}
\URLaddressD{\url{https://www.nist.gov/people/howard-cohl}}

\Address{$^{\rm b)}$~Department of Mathematics, Massachusetts Institute of Technology,\\
\hphantom{$^{\rm b)}$}~Cambridge, MA 02139, USA}
\EmailD{\href{mailto:justin.s.park77@gmail.com}{jpark00@mit.edu}}

\Address{$^{\rm c)}$~Department of Mathematical Sciences, University of Wisconsin-Milwaukee,\\
\hphantom{$^{\rm c)}$}~Milwaukee, WI 53211, USA}
\EmailD{\href{mailto:volkmer@uwm.edu}{volkmer@uwm.edu}}

\ArticleDates{Received September 17, 2020, in final form May 04, 2021; Published online May 20, 2021}

\Abstract{We derive all eighteen Gauss hypergeometric representations for the Ferrers function of the second kind, each with a different argument. They are obtained from the eighteen hypergeometric representations of the associated Legendre function of the second kind by using a limit representation. For the 18 hypergeometric arguments which correspond to these representations, we give geometrical descriptions of the corresponding convergence regions in the complex plane. In~addition, we consider a corresponding single sum Fourier expansion for the Ferrers function of the second kind. In~four of the eighteen cases, the determination of the Ferrers function of the second kind requires the evaluation of the hyper\-geo\-met\-ric function separately above and below the branch cut at $[1,\infty)$. In~order to complete these derivations, we use well-known results to derive expressions for the hypergeometric function above and below its branch cut. Finally we give a detailed review of the 1888 paper by Richard Olbricht who was the first to study hypergeometric representations of~Legendre functions.}

\Keywords{Ferrers functions; associated Legendre functions; Gauss hypergeometric function}

\Classification{33C05; 33C55; 42B05}

\section{Introduction}\label{sec1}
In 1888, Richard Emil Olbricht, a student of Felix Klein,
gave a list of 72 solutions of the associated Legendre differential equation~\cite[Section~3]{Olbricht1888}
(see Appendix~\ref{Olbrichtseventytwo} for a detailed description of
Olbricht's analysis),~\cite[equation~(14.2.1)]{NIST:DLMF}
\begin{gather}\label{Legendre}
\big(1-x^2\big)\frac{{\rm d}^2y}{{\rm d}x^2}-2x \frac{{\rm d}y}{{\rm d}x} +\left(\nu(\nu+1)-\frac{\mu^2}{1-x^2}\right) y=0,
\end{gather}
in terms of the Gauss hypergeometric series~\cite[Chapter 15]{NIST:DLMF}.
For instance, the first entry of the list is
\begin{gather*}
\left(\frac{1-x}{2}\right)^{\frac12\mu}\left(\frac{1+x}{2}\right)^{\frac12\mu} \F{\mu-\nu,\nu+\mu+1}{1+\mu}{\frac{1-x}{2}}\!.
\end{gather*}
In fact, as Olbricht has
pointed out, there are 18 possible
arguments $w$ of this type of
hyper\-geo\-metric series. Olbricht
has arranged these arguments into three separate groups
which we now~list.

Group I:
\begin{gather*}
w_1=\frac{1-x}{2},\quad\ w_2=\frac{1+x}{2},\quad\ w_3=\frac{x-1}{x+1},\quad\
 w_4=\frac{x+1}{x-1},\quad\ w_5=\frac{2}{1+x},\quad\ w_6=\frac{2}{1\!-x}.
\end{gather*}

Group II:
\begin{gather*}
w_7=1-x^2,\quad\ w_8=\frac{1}{1-x^2},\quad\ w_9=x^2,\quad\
w_{10}=\frac{1}{x^2},\quad\ w_{11}=\frac{x^2-1}{x^2},\quad\ w_{12}=\frac{x^2}{x^2-1}.
\end{gather*}

Group III:
\begin{gather*}
w_{13}=\frac{-x+\sqrt{x^2-1}}{2\sqrt{x^2-1}},\qquad w_{14}=\frac{x-\sqrt{x^2-1}}{x+\sqrt{x^2-1}},\qquad
w_{15}=\frac{2\sqrt{x^2-1}}{x+\sqrt{x^2-1}},
\\
w_{16}=\frac{2\sqrt{x^2-1}}{-x+\sqrt{x^2-1}},\qquad
w_{17}= \frac{x+\sqrt{x^2-1}}{2\sqrt{x^2-1}},\qquad w_{18}=\frac{x+\sqrt{x^2-1}}{x-\sqrt{x^2-1}}.
\end{gather*}
The associated Legendre functions of the first kind $P_\nu^\mu(x)$ and of the second kind $Q_\nu^\mu(x)$
are solutions of \eqref{Legendre} with simple behavior at the regular singularities $x=1$ and $x=\infty$, respectively. In~Magnus, Oberhettinger and Soni~\cite[pp.\ 155--163]{MOS} (see also~\cite[pp.~124--139]{Erdelyi}) these associated Legendre functions are expressed
as linear combinations of one or two of Olbricht's 72 solutions for every of the 18 possible $w_j$.
The hypergeometric representation{s} of the associated Legendre function $P_\nu^\mu(x)$ involving $w_j$, $1\le j\le 18$, is
labelled by the number $j$, and the corresponding formula{s} for $Q_\nu^\mu(x)$ {are} labelled by $j+18$.

The purpose of this paper is to generate a complete list of 18 such hypergeometric representations for the
Ferrers function of the second kind $\Q_\nu^\mu(x)$ which is also a solution of \eqref{Legendre}.
Some of these 18 representations can be found in the literature but we have not seen a complete list.
We {will} also discuss a related Fourier expansion and some {relevant} geometric topics. We pay special attention to
a precise formulation of the domains of validity of the stated representations in the complex plane including
a discussion of branch cuts.

Of course, the theory of the associated Legendre differential equation is a very classical topic. For instance, we
refer to Heine~\cite{Heine1878,Heine}, Hobson~\cite{Hob}, Lebedev~\cite{Lebedev}, Sch\"afke~\cite{Schafke63},
Magnus, Oberhettinger and Soni~\cite{MOS}, Andrews, Askey and Roy~\cite{AAR}, as well as Zhurina
and Karmazina~\cite{ZhurinaKarmazina1966}. In~such a well-investigated area one cannot claim to have new results.
This is a~review paper that, as we may hope, throws a new light on the theory of the Ferrers function of the second kind.
Since this review contains a large number
of individual formulas, we have summarized the
main results in Table~\ref{Tabsum}.
We will use the complex domains
\begin{gather}
\label{D1}
D_1:=\C\setminus((-\infty,-1]\cup[ 1,\infty)),\\
\label{D1p}
D_1^+:=D_1\cap\{x\colon \mathop{\rm Re} x>0\},\\
\label{D2}
D_2:=\C\setminus(-\infty,1],\\
\label{D2p}
D_2^+:=D_2\cap\{x\colon \mathop{\rm Re} x>0\},\\
\label{D3}
D_3:=\C\setminus(-1,\infty)=-D_2.
\end{gather}

\begin{rem}
The \href{http://dlmf.nist.gov}{DLMF} tabulates hypergeometric
representations for the Ferrers function of~the first kind~\cite[equations~(14.3.1) and~(14.3.11)]{NIST:DLMF} with arguments $(1-x)/2$ and $x^2$ respectively.
One may also find representations with
arguments $(x-1)/(x+1)$, $1-x^2$, $1-x^{-2}$ in~\cite[pp.~166--167]{MOS}. As far as we are
aware, an extensive list for all of the hypergeometric representations of~the Ferrers
function of the first kind does not exist in the literature. However, such a list could easily
be generated by starting with the hypergeometric representations of the associated Legendre
function of the first kind tabulated in~\cite{MOS} and using \eqref{FerrersP}.
\end{rem}

\begin{table}[h!]
\begin{center}
\caption{This table lists all Gauss hypergeometric representations
of the Ferrers function of the second kind presented in this paper. In~regard to Theorem~\ref{Fourier1}, $u=x+{\rm i}\sqrt{1-x^2}$, $v=x-{\rm i}\sqrt{1-x^2}$.}\vspace{1ex}
\label{Tabsum}\setlength{\tabcolsep}{5pt}
{\small\begin{tabular}{c|c|c|c|c|c|c} \hline
\mlt{ { Olbricht}\\[-.5ex] { group}} \TT\TB &
\mlt{ { Domain}\\[-.5ex] { of} $x$} & \mlt{{ Domain} \\[-.5ex] {\sc } { of} $\nu$} & \mlt{{ Domain}\\[-.5ex] { of} $\mu$} &\mlt{{ Extra}\\{ restriction}} & {Argument(s)} & {Theorem}
\\ \hline
\hline I &$x\in D_1$ &$\nu\in{\mathbb C}$ &$\mu\in{\mathbb C}\setminus{\mathbb Z}$ & $\nu+\mu\not\in-{\mathbb N}$& $\frac{1-x}{2}$ \TT\TB & \ref{tI1}
\\
\hline I &$x\in D_1$ &$\nu\in{\mathbb C}$ &$\mu\in{\mathbb C}\setminus{\mathbb Z}$ & $\nu+\mu\not\in-{\mathbb N}$& $\frac{1+x}{2}$ \TT\TB & \ref{tI2}
\\
\hline I &$x\in D_1$ &$\nu\in{\mathbb C}$ &$\mu\in{\mathbb C}\setminus{\mathbb Z}$ & $\nu+\mu\not\in-{\mathbb N}$& $\frac{x-1}{x+1}$ \TT\TB & \ref{tI3}
\\
\hline I &$x\in D_1$ &$\nu\in{\mathbb C}$ &$\mu\in{\mathbb C}\setminus{\mathbb Z}$& $\nu+\mu\not\in-{\mathbb N}$&$\frac{x+1}{x-1}$ \TT\TB& \ref{tI4}
\\
\hline I &$\pm\mathop{\rm Im} x\lessgtr 0$ &$\nu\in{\mathbb C},\ 2\nu\not\in{\mathbb Z}$ &$\mu\in{\mathbb C}$ & $\nu+\mu\not\in-{\mathbb N}_0$&$\frac{2}{1+x}$ \TT\TB& \ref{tI5}
\\
\hline I &$\pm\mathop{\rm Im} x\lessgtr 0$ &$\nu\in{\mathbb C},\ 2\nu\not\in{\mathbb Z}$ &$\mu\in{\mathbb C}$ & $\nu+\mu\not\in-{\mathbb N}_0$&$\frac{2}{1-x}$ \TT\TB& \ref{tI6}
\\
\hline I &$x\in D_1$ &$\nu\in{\mathbb C}$ &$\mu\in{\mathbb C}$ & $\nu+\mu\not\in-{\mathbb Z}$&$\frac{1-x}{2},\ \frac{1+x}{2}$ \TT\TB& \ref{tI7}
\\
\hline
\hline II &$x\in D_1^+$\TB &$\nu\in{\mathbb C}$ &$\mu\in{\mathbb C}\setminus{\mathbb Z}$ & $\nu+\mu\not\in
-{\mathbb N}$& $1-x^2$\TTTt& \ref{tII1}
\\
\hline II &$\pm\mathop{\rm Im} x\lessgtr 0$ &$\nu\in{\mathbb C},\ \nu+\frac12\not\in{\mathbb Z}$ &$\mu\in{\mathbb C}$ & $\nu+\mu\not\in-{\mathbb N}$& $\frac{1}{1-x^2}$\TT\TTb & \ref{tII2}
\\
\hline II &$x\in D_1$ &$\nu\in{\mathbb C}$ &$\mu\in{\mathbb C}$ & $\nu+\mu\not\in-{\mathbb N}$& $x^2$ & \ref{tII3}
\\
\hline II &$\pm\mathop{\rm Im} x\lessgtr 0$ &$\nu\in{\mathbb C},\ \nu+\frac12\not\in{\mathbb Z}$ &$\mu\in{\mathbb C}$ & $\nu+\mu\not\in-{\mathbb N}$&$\frac{1}{x^2}$ \TT\TTb & \ref{tII4}
\\
\hline II &$x\in D_1^+$ &$\nu\in{\mathbb C}$ &$\mu\in{\mathbb C}\setminus{\mathbb Z}$ & $\nu+\mu\not\in-{\mathbb N}$&$\frac{x^2-1}{x^2}$ \TT\TTb& \ref{tII5}
\\
\hline II &$x\in D_1$ &$\nu\in{\mathbb C}$ &$\mu\in{\mathbb C}$ & $\nu+\mu\not\in{\mathbb N}$&$\frac{x^2}{x^2-1}$ \TT\TTb& \ref{tII6}
\\
\hline
\hline III &$x\in D_1$ &$\nu\in{\mathbb C},\ \nu+\frac12\not\in{\mathbb Z}$ &$\mu\in{\mathbb C}$ & $\nu+\mu\not\in-{\mathbb N}$& $\frac{\pm x+{\rm i}\sqrt{1-x^2}}{2{\rm i}\sqrt{1-x^2}}$\TTT\TTB & \ref{tIII1}
\\
\hline III &$x\in D_1$ &$\nu\in{\mathbb C},\ \nu+\frac12\not\in{\mathbb Z}$ &$\mu\in{\mathbb C}$ & $\nu+\mu\not\in-{\mathbb N}$& $\frac{x\mp {\rm i}\sqrt{1-x^2}}{x\pm {\rm i}\sqrt{1-x^2}}$ \TTT\TTB& \ref{tIII2}
\\
\hline III &$x\in D_1^+$ &$\nu\in{\mathbb C}$ &$\mu\in{\mathbb C},\ 2\mu\not\in{\mathbb Z}$ & $\nu+\mu\not\in-{\mathbb N}$& $\frac{2{\rm i}\sqrt{1-x^2}}{\pm x+{\rm i}\sqrt{1-x^2}}$\TTB\TTT & \ref{tIII3}
\\
\hline
\hline III &$x\in D_1$ &$\nu\in{\mathbb C}$ &$\mu\in{\mathbb C}$ & $\nu+\mu\not\in-{\mathbb N}$&$\frac{u}{v},\ \frac{v}{u}$\TTt\TB & \ref{Fourier1}
\\
\hline
\end{tabular}}
\end{center}\vspace{-2ex}
\end{table}

This paper is organized as follows. In~Section~\ref{sec2} we provide definitions and properties of the special functions
used in this paper. In~Sections~\ref{sec3} (Group~I),~\ref{sec4} (Group~II), and~\ref{sec5} (Group~III), we~give the list of 18 Gauss hypergeometric representations of~$\Q_\nu^\mu(x)$
in terms of Olbricht's solutions and include some discussion of these results.
In~Section~\ref{sec6} we look at the Fourier expansion of~the function $(\sin \theta)^{-\mu} \Q_\nu^\mu(\cos\theta)$, and in
Section~\ref{sec7} we list the regions in the complex $x$-plane, where the hypergeometric arguments $|w_j|<1$ for each $j=1,2,\dots,18$.

\section{Special functions}\label{sec2}

We will use the following notations.
Let $\N := \{1, 2, 3, \ldots \}$, $\N_0:=\N\cup\{0\}$, $\Z:=\{0,\pm1,\pm2,\ldots\}$,
$\R$ real numbers, $\C$ complex numbers.

\subsection{{The gamma and Gauss hypergeometric functions}}
The gamma function $\Ga\colon\C\setminus-\N_0\to{\mathbb C}$
generalizes the factorial $n!=\Ga(n+1)$, $n\in\N_0$
(see~\cite[Chapter 5]{NIST:DLMF}).
We will use Euler's reflection
formula~\cite[equation~(5.5.3)]{NIST:DLMF}
\begin{gather}\label{Eulerreflect}
\Ga(z) \Ga(1-z) = \frac{\pi}{\sin(\pi z)}{,}
\end{gather}
and the duplication formula~\cite[equation~(5.5.5)]{NIST:DLMF}
\begin{gather*}
\Gamma(2z)=\frac{2^{2z-1}}{\sqrt{\pi}}\Ga(z)\Ga\big(z+\tfrac12\big).
\end{gather*}
The hypergeometric function
${}_2F_1(a,b;c;w)$, where
$(a,b,c,w)\in\C^2\times(\C\setminus-\N_0)\times
(\C\setminus[1,\infty))$
is a~complex valued analytic function which can be defined
in terms of the infinite series
for $|w|<1$,
\begin{gather}\label{Fseries}
 \F{a,b}{c}{w}:=\sum_{n=0}^\infty \frac{(a)_n(b)_n}{(c)_n} \frac{w^n}{n!},
\end{gather}
where $(a)_n=a(a+1)\cdots (a+n-1)$ is the Pochhammer symbol~\cite[equation~(5.2.4)]{NIST:DLMF},
and elsewhere by analytic continuation.
Frequently, we will use the following linear transformations
of the Gauss hypergeometric {function}, valid for
 $w\in \C\setminus[1,\infty)$, namely
Euler's and
and Pfaff's transformations
\cite[equation~(15.8.1)]{NIST:DLMF}
\begin{align}
\label{Euler}
\F{a,b}{c}{w}&=(1-w)^{c-a-b}\F{c-a,c-b}{c}{w}\\
\label{Pfaff1}
&=(1-w)^{-a}\F{a,c-b}{c}{\frac{w}{w-1}}\\
&=(1-w)^{-b}\F{b,c-a}{c}{\frac{w}{w-1}}\!. \label{Pfaff2}
\end{align}
Please see Appendix~\ref{hypercutsection} for a presentation of the values
of the hypergeometric function from above and below its branch cut $[1,\infty)$.
Unless stated otherwise the complex power $x^y$ denotes its principal value defined for $x\in \C\setminus(-\infty,0]$.

\subsection{The associated Legendre and Ferrers functions}

{We will follow the notation for Legendre functions as used in~\cite[Chapter 14]{NIST:DLMF}.}
The associated Legendre functions of the first kind $P_\nu^\mu(z)$ is given by~\cite[equation~(14.3.6)]{NIST:DLMF}
\begin{gather*}
P_\nu^\mu(z):=\frac{1}{\Ga(1-\mu)}\left(\frac{z+1}{z-1}\right)^{\frac12\mu}
\F{-\nu,\nu+1}{1-\mu}{\frac{1-z}{2}}\!.
\end{gather*}
It is defined for all $\nu,\mu\in \C$ and all $z\in D_2$.
The associated Legendre function of the second kind $Q_\nu^\mu({z})$ is given by~\cite[equation~(14.3.7)]{NIST:DLMF}
\begin{gather}\label{defQ}
Q_\nu^\mu(z):=\frac{\sqrt{\pi}\,\expe^{{\rm i}\pi\mu}\Ga(\nu+\mu+1)\big(z^2-1\big)^{\frac12\mu}}
{2^{\nu+1}\Ga\big(\nu+\frac32\big)z^{\nu+\mu+1}}
\F{\frac{\nu+\mu+2}{2},\frac{\nu+\mu+1}{2}}{\nu+\frac32}{\frac{1}{z^2}}\!,
\end{gather}
where $z\in D_2$ and $\nu+\mu\in\C\setminus -\N $.
Note that for any expression of the form $\big(z^2-1\big)^\alpha$, read this~as
\begin{gather*}
\big(z^2-1\big)^\alpha:=(z+1)^\alpha(z-1)^\alpha,
\end{gather*}
for any fixed $\alpha\in\C$ and $z\in D_2$.

The Ferrers functions of the first and second kind respectively
$\P_\nu^\mu(x)$, $\Q_\nu^\mu(x)$,
can be defined by~\cite[cf.~equations~(14.23.4) and~(14.23.5)]{NIST:DLMF}
\begin{gather}
\label{FerrersP}
\P_\nu^\mu (x) = \expe^{\frac12{\rm i}\pi\mu} P_\nu^\mu (x + {\rm i}0),
\\[.5ex]
\Q_\nu^\mu (x)=
\tfrac{1}{2} \big[\expe^{-\frac{1}{2}{\rm i}\pi\mu}
\big(\expe^{-{\rm i}\pi\mu} Q_\nu^\mu (x + {\rm i}0) \big)
+ \expe^{\frac{1}{2}{\rm i}\pi\mu} \big( \expe^{-{\rm i}\pi\mu} Q_\nu^\mu (x - {\rm i}0) \big)\big],
\label{FerrersQ}
\end{gather}
for $x\in(-1,1)$,
although the Ferrers functions can be extended analytically to the domain $D_1$.
Note that we have written \eqref{FerrersQ} in this
particular form since the table of associated Legendre
functions of the second kind found in
Magnus, Oberhettinger and Soni (1966)~\cite[pp.~155--163]{MOS}
(see also~\cite[pp.~124--139]{Erdelyi}), are listed in
terms of $\expe^{-{\rm i}\pi\mu}Q_\nu^\mu(z)$, so this form
of the limit representation
is particularly useful.

In regard to the name
\textit{Ferrers functions}
(also known as Legendre
functions on the cut), this
is the name adopted for these functions in the NIST DLMF~\cite[Chapter 14]{NIST:DLMF}.
This is because
the mathematician Norman Macleod
Ferrers (1829--1903), was the first
to extensively describe their properties~\cite{Ferrers1877}.
Of course, since they are intimately
connected to associated Legendre functions, another name as suggested by a referee could be Legendre--Ferrers functions.\footnote{One
may compute the Ferrers function of the first and second kind respectively
with the computer algebra system
{\sf Mathematica} by using the commands
\texttt{LegendreP[}$\nu$\texttt{,}$\mu$\texttt{,p,}$x$\texttt{]} and
\texttt{LegendreQ[}$\nu$\texttt{,}$\mu$\texttt{,p,}$x$\texttt{]}, where
$p=1,2$. In~the computer algebra system {\sf MAPLE} if you define \texttt{\_\,EnvLegendreCut\,:= -1..1;} (default), then you get the standard associated Legendre functions. Alternatively, if you define
\texttt{\_\,EnvLegendreCut\,:=1..infinity;} then you get the Ferrers functions.}\,\footnote{
The mention of specific products, trademarks, or brand names is for purposes of
identification only. Such mention is not to be interpreted in any way as an endorsement
or certification of such products or brands by the National Institute of Standards and
Technology, nor does it imply that the products so identified are necessarily the best
available for the purpose. All trademarks mentioned herein belong to their respective
owners.}

Common applications of the Ferrers functions include spherical or hyperspherical and~hyper\-spheroidal
harmonics (e.g., the quantum theory of angular momentum), conical functions, the~Mehler--Fock integral
transforms, opposite antipodal fundamental solutions of the Laplace--Beltrami operator and Helmholtz
operators on hyperspheres, and many other applications. The~Ferrers functions appear whenever one
performs harmonic analysis on spheres or hyperspheres with dimension greater than or equal to one because
they arise from the method of~sepa\-ra\-tion of
variables for the Laplace--Beltrami operator on these constant positive curvature
Riemannian manifolds. In~fact, the orthogonal Gegenbauer (or ultraspherical) polynomials provide a basis
for functions on hyperspheres (as well as on Euclidean space, hyperbolic geometry and other isotropic
manifolds) and are fundamentally connected to the Ferrers functions of the first kind (see~\cite[equation~(14.3.21)]{NIST:DLMF}.

The following relations expressing the Ferrers function of the second kind $\Q_\nu^\mu$
in terms of~the associated Legendre functions $P_\nu^\mu$ and $Q_\nu^\mu$ will be useful.
Equations \eqref{FerrersQa}
and \eqref{FerrersQd} can be
found in~\cite[equation~(14.23.6)]{NIST:DLMF}, and the other equations follow from
the connection relations~\cite[equations~(14.9.12) and~(14.9.15)]{NIST:DLMF}.

\begin{thm}\quad
\begin{enumerate}\itemsep=0pt
\item[$(a)$] For $\mathop{\rm Im} x>0$, we have
\begin{align}
\Q_\nu^\mu(x)&=\expe^{-\frac32\pi {\rm i}\mu} Q_\nu^\mu(x)+
\frac{\pi {\rm i}}2\expe^{\frac12\pi {\rm i}\mu} P_\nu^\mu(x)\label{FerrersQa}
\\
&= \frac12\pi \cot(\pi\mu)\expe^{\frac12\pi {\rm i}\mu}P_\nu^\mu(x)-\frac{\pi{\expe^{-\frac12\pi {\rm i}\mu}}}
{2\sin(\pi\mu)}
\frac{\Ga(\nu+\mu+1)}{\Ga(\nu-\mu+1)}P_\nu^{-\mu}(x)\label{FerrersQb}
\\
&= \expe^{-\frac12\pi {\rm i}\mu}\left(\cos(\pi\mu)-{\rm i}\frac{\sin(\pi(\mu-\nu))}{2\cos(\pi\nu)}\right)Q_\nu^\mu(x)\nonumber
\\
&\phantom{=}+{\rm i} \expe^{-\frac12\pi {\rm i}\mu}\frac{\sin(\pi(\mu-\nu))}{2\cos(\pi\nu)} Q_{-\nu-1}^\mu(x).\label{FerrersQc}
\end{align}

\item[$(b)$] For $\mathop{\rm Im} x<0$, we have
\begin{gather}\allowdisplaybreaks
\Q_\nu^\mu(x)=\expe^{-\frac12\pi {\rm i}\mu} Q_\nu^\mu(x)-\frac{\pi {\rm i}}2\expe^{-\frac12\pi {\rm i}\mu} P_\nu^\mu(x)\label{FerrersQd}
\\
\hphantom{\Q_\nu^\mu(x)}
{}= \frac12\pi \cot(\pi\mu)\expe^{-\frac12\pi {\rm i}\mu}P_\nu^\mu(x)
-\frac{\pi{\expe^{\frac12\pi {\rm i}\mu}}}{2\sin(\pi\mu)}
\frac{\Ga(\nu+\mu+1)}{\Ga(\nu-\mu+1)}P_\nu^{-\mu}(x)\label{FerrersQe}
\\ \hphantom{\Q_\nu^\mu(x)}
{}= \expe^{-\frac32\pi {\rm i}\mu}\left(\!\cos(\pi\mu)+{\rm i}\frac{\sin(\pi(\mu-\nu))}{2\cos(\pi\nu)}\right)Q_\nu^\mu(x)\nonumber
\\ \hphantom{\Q_\nu^\mu(x)=}
{}-{\rm i} \expe^{-\frac32\pi {\rm i}\mu}\frac{\sin(\pi(\mu-\nu))}{2\cos(\pi\nu)} Q_{-\nu-1}^\mu(x).\label{FerrersQf}
\end{gather}
\end{enumerate}
\end{thm}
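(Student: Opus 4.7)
The plan is first to establish equations \eqref{FerrersQa} and \eqref{FerrersQd} from the definition \eqref{FerrersQ} combined with the standard jump relation of $Q_\nu^\mu$ across the cut $(-1,1)$, and then to produce the remaining four equations by substituting into these two the standard connection formulas between $P_\nu^\mu$ and $Q_\nu^\mu$.

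To prove \eqref{FerrersQa}, I would start from the defining relation \eqref{FerrersQ} valid on $(-1,1)$, which presents $\Q_\nu^\mu$ as a symmetric combination of the two boundary values $Q_\nu^\mu(x+ i 0)$ and $Q_\nu^\mu(x- i 0)$. Using \cite[equation~(14.23.6)]{NIST:DLMF} to express $Q_\nu^\mu(x- i 0)$ in terms of $Q_\nu^\mu(x+ i 0)$ and $P_\nu^\mu(x+ i 0)$, substituting back into \eqref{FerrersQ}, and analytically continuing $x$ from $(-1,1)$ into $\{\mathop{\rm Im} x>0\}\subset D_2$, where both $P_\nu^\mu$ and $Q_\nu^\mu$ are single-valued and holomorphic, yields \eqref{FerrersQa}. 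Equation \eqref{FerrersQd} is obtained identically with the roles of $x+ i 0$ and $x- i 0$ interchanged; since $D_1\cap\{\mathop{\rm Im} x\neq 0\}$ is connected to the real segment $(-1,1)$ through either half-plane, the analytic continuation is unambiguous.

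Next, I would derive \eqref{FerrersQb} by substituting into \eqref{FerrersQa} the connection formula \cite[equation~(14.9.12)]{NIST:DLMF}, which expresses $Q_\nu^\mu$ as a linear combination of $P_\nu^\mu$ and $P_\nu^{-\mu}$. Combining the resulting $P_\nu^\mu$ coefficient with the $P_\nu^\mu$ term already present in \eqref{FerrersQa}, and consolidating the trigonometric factors via Euler's reflection formula \eqref{Eulerreflect}, collapses the $P_\nu^\mu$ prefactor into $\tfrac12 \pi\cot(\pi\mu)\expe^{\frac12\pi i\mu}$. Equation \eqref{FerrersQc} follows by the analogous substitution of the companion connection formula \cite[equation~(14.9.15)]{NIST:DLMF}, which writes $P_\nu^\mu$ in terms of $Q_\nu^\mu$ and $Q_{-\nu-1}^\mu$, into \eqref{FerrersQa}, and collecting the resulting $Q_\nu^\mu$ and $Q_{-\nu-1}^\mu$ coefficients. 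Equations \eqref{FerrersQe} and \eqref{FerrersQf} of part (b) come out of \eqref{FerrersQd} by exactly the same two substitutions.

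The main obstacle is not analytic but algebraic: tracking the phases $\expe^{\pm k\pi i\mu/2}$ with $k\in\{1,2,3\}$ together with the trigonometric factors $\sin(\pi\mu)$, $\cos(\pi\nu)$, and $\sin(\pi(\mu-\nu))$ so that they collapse into the stated compact forms. Once this bookkeeping is executed correctly the identities are essentially immediate, and it also explains the implicit exclusions (such as $2\mu\notin\Z$ when dividing by $\sin(\pi\mu)$ in \eqref{FerrersQb}, \eqref{FerrersQe}, and $2\nu+1\notin\Z$ when dividing by $\cos(\pi\nu)$ in \eqref{FerrersQc}, \eqref{FerrersQf}) that are forced by the denominators appearing in the intermediate connection formulas.
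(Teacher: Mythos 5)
Your proposal is correct and takes essentially the same route as the paper: there, \eqref{FerrersQa} and \eqref{FerrersQd} are taken directly from \cite[equation~(14.23.6)]{NIST:DLMF} (equivalently, from \eqref{FerrersQ} and the boundary values of $Q_\nu^\mu$ on $(-1,1)$, extended to the half-planes by analytic continuation), and the remaining four identities are obtained by substituting the connection formulas \cite[equations~(14.9.12) and~(14.9.15)]{NIST:DLMF}, exactly as you describe. The phase and trigonometric bookkeeping, and the implicit parameter restrictions you point out, are precisely what the paper leaves to the reader.
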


\section[Group I hypergeometric representations for Q nu mu(x)]
{Group I hypergeometric representations for $\boldsymbol{\Q_\nu^\mu(x)}$}\label{sec3}

The following result appears in~\cite[p.~167]{MOS} in a slightly different form.
However, it is claimed therein that the result is only valid for $x\in(0,1)$,
where in fact it is valid for $x\in(-1,1)$. On the other hand, the same formula
is reproduced in terms of trigonometric functions in~\cite[p.~169]{MOS}, where
the full range $x\in(-1,1)$ is indicated.
This result is also stated in~\cite[equation~(14.3.2)]{NIST:DLMF}.

\begin{thm}\label{tI1}
Let $x\in D_1$, $\nu \in \C$, $\mu \in \C \setminus \Z$, $\nu+\mu\not\in-\N$. Then
\begin{gather*}
\Q_\nu^\mu (x) = \frac{\pi}{2 \sin(\pi\mu)} \Bigg[ \frac{\cos(\pi\mu)}{\Ga(1 - \mu)}
{\left( \frac{1+x}{1-x} \right)}^{{\frac12\mu}}
\F{-\nu, \nu+1}{1 - \mu}{\frac{1-x}{2}} \nonumber
\\ \hphantom{\Q_\nu^\mu (x)=\frac{\pi}{2 \sin(\pi\mu)} \Bigg[}
{}- \frac{\Ga(\nu+\mu+1)}{\Ga(\mu+1)\Ga(\nu-\mu+1)}
{\left( \frac{1-x}{1+x} \right)}^{{\frac12\mu}}
\F{-\nu, \nu+1}{1+\mu}{\frac{1-x}{2}}\! \Bigg].
\end{gather*}
\end{thm}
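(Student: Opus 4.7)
The plan is to combine the Ferrers--Legendre connection relation \eqref{FerrersQb} with the defining hypergeometric representation of the associated Legendre function of the first kind, and then extend the resulting identity from the real interval $(-1,1)$ to all of $D_1$ by analytic continuation.

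First I would fix $x\in(-1,1)$ and take the limit $z=x+{\rm i}\epsilon$ with $\epsilon\to 0^+$ in \eqref{FerrersQb}, which is valid for $\mathop{\rm Im} z>0$. Applying \eqref{FerrersP} at both $\mu$ and $-\mu$ gives $P_\nu^{\pm\mu}(x+{\rm i}0)=\expe^{\mp\frac12{\rm i}\pi\mu}\P_\nu^{\pm\mu}(x)$, so the phase factors $\expe^{\pm\frac12{\rm i}\pi\mu}$ in \eqref{FerrersQb} are absorbed and I recover the standard Ferrers connection relation
\begin{gather*}
\Q_\nu^\mu(x)=\tfrac12\pi\cot(\pi\mu)\P_\nu^\mu(x)-\frac{\pi}{2\sin(\pi\mu)}\frac{\Ga(\nu+\mu+1)}{\Ga(\nu-\mu+1)}\P_\nu^{-\mu}(x).
\end{gather*}
Substituting into $\P_\nu^{\pm\mu}(x)$ the hypergeometric representation read off from the definition of $P_\nu^{\pm\mu}(z)$ at $z=x+{\rm i}0$ then produces the asserted identity on $(-1,1)$.

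The delicate step is a careful branch computation: for $z=x+{\rm i}\epsilon$ with $x\in(-1,1)$ and $\epsilon\to 0^+$, a short calculation shows that $(z+1)/(z-1)$ tends to the negative real number $-(1+x)/(1-x)$ from the lower half-plane, so its principal argument tends to $-\pi$. Therefore $\bigl((z+1)/(z-1)\bigr)^{\mu/2}\to\expe^{-\frac12{\rm i}\pi\mu}\bigl((1+x)/(1-x)\bigr)^{\mu/2}$, and this phase cancels the $\expe^{\frac12{\rm i}\pi\mu}$ sitting in front of $P_\nu^\mu$ in \eqref{FerrersQb}; the same computation with $\mu\mapsto-\mu$ cancels the $\expe^{-\frac12{\rm i}\pi\mu}$ in front of $P_\nu^{-\mu}$. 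This bookkeeping of branches is the main obstacle.

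Finally, I would extend the identity from $(-1,1)$ to all of $x\in D_1$ by analytic continuation. The Ferrers function $\Q_\nu^\mu$ is analytic on $D_1$ by construction. On the right-hand side, $\F{-\nu,\nu+1}{1\mp\mu}{(1-x)/2}$ is analytic for $(1-x)/2\notin[1,\infty)$, i.e.\ for $x\notin(-\infty,-1]$, and the principal-branch power $\bigl((1\pm x)/(1\mp x)\bigr)^{\mu/2}$ is analytic precisely when $(1+x)/(1-x)\notin(-\infty,0]$, which holds throughout $D_1$. The hypotheses $\mu\in\C\setminus\Z$ and $\nu+\mu\notin-\N$ ensure that $\sin(\pi\mu)$, $\Ga(1\pm\mu)$ and the ratio $\Ga(\nu+\mu+1)/\Ga(\nu-\mu+1)$ are all finite and nonzero where required, so by the identity principle the formula extends from $(-1,1)$ to all of $D_1$.
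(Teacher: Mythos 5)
Your proof is correct, but it takes a different route from the one the paper actually writes out for this theorem. The paper starts from the hypergeometric representation of $\expe^{-{\rm i}\pi\mu}Q_\nu^\mu(z)$ with argument $(1-z)/2$ (\cite[Entry~19, p.~160]{MOS}, equivalently \cite[equation~(14.3.20)]{NIST:DLMF}), rewrites the gamma factors via Euler's reflection formula \eqref{Eulerreflect}, and then feeds this into the two-sided averaging formula \eqref{FerrersQ}; the $\cos(\pi\mu)$ appears as $\tfrac12\big(\expe^{{\rm i}\pi\mu}+\expe^{-{\rm i}\pi\mu}\big)$ coming from the phases of $\big((z+1)/(z-1)\big)^{\frac12\mu}$ above and below the cut. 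You instead use the one-sided relation \eqref{FerrersQb} in the upper half-plane, pass to the boundary to recover the Ferrers connection formula \eqref{connect1}, and substitute the representation \eqref{hyprep} of $\P_\nu^{\pm\mu}$; your branch bookkeeping for $\big((z+1)/(z-1)\big)^{\frac12\mu}$ as $z\to x+{\rm i}0$ is exactly right and is consistent with \eqref{FerrersP}. This is in fact the identification the authors themselves sketch in the remark following Theorem~\ref{tI7}, where they note that Theorem~\ref{tI1} follows from \eqref{connect1} and \eqref{hyprep}. What the paper's route buys is uniformity: the same template (a tabulated representation of $\expe^{-{\rm i}\pi\mu}Q_\nu^\mu$ plus \eqref{FerrersQ}) is reused for most of the eighteen cases. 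What your route buys is that only one boundary limit is needed and the $\cos(\pi\mu)$ is inherited directly from the connection coefficient rather than produced by averaging exponentials. Your analytic-continuation step at the end, including the verification that $(1+x)/(1-x)\notin(-\infty,0]$ exactly on $D_1$, matches the paper's concluding argument.
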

\begin{proof}
According to~\cite[equation~(14.3.20)]{NIST:DLMF}
and~\cite[Entry 19, p.~160]{MOS},
the associated Legendre function of the second kind satisfies
\begin{gather*}
\expe^{-{\rm i}\pi\mu} Q_\nu^\mu (z) =
\frac{\Ga(-\mu)\Ga(\nu+\mu+1)}{2 \Ga(\nu-\mu+1)}
\left(\frac{z-1}{z+1}\right)^{{\frac12\mu}}
\F{-\nu, \nu+1}{1+\mu}{\frac{1-z}{2}}\nonumber
\\ \hphantom{\expe^{-{\rm i}\pi\mu} Q_\nu^\mu (z) =}
{} + \frac12 \Ga(\mu)\left(\frac{z+1}{z-1}\right)^{{\frac12\mu}}
\F{-\nu, \nu+1}{1 - \mu}{\frac{1-z}{2}}\!,
\end{gather*}
where $z\in D_2$. This can be rewritten as
\begin{gather*}
\expe^{-{\rm i}\pi\mu} Q_\nu^\mu (z)
= \frac{-\Ga(\nu+\mu+1)}{2 \Ga(\nu-\mu+1) \sin(\pi\mu) \Ga(\mu+1)}
{\left( \frac{z-1}{z+1} \right)}^{{\frac12\mu}}
\F{-\nu, \nu+1}{1+\mu}{\frac{1-z}{2}}\nonumber
\\ \hphantom{\expe^{-{\rm i}\pi\mu} Q_\nu^\mu (z)= }
{}+ \frac{\pi}{2 \sin(\pi\mu)
\Ga(1 - \mu)} {\left( \frac{z+1}{z-1} \right)}^{{\frac12\mu}}
\F{-\nu, \nu+1}{1 - \mu}{\frac{1-z}{2}}\!,
\end{gather*}
\noindent using the reflection formula \eqref{Eulerreflect}.
Using \eqref{FerrersQ}, we derive from this equation an expression for~$\Q_\nu^\mu{(x)}$, namely
\begin{gather*}
\Q_\nu^\mu (x) = \frac{\pi}{4 \sin(\pi\mu)} \Bigg[ \frac{-{2}\Ga(\nu+\mu+1)}
{\Ga(\nu-\mu+1) \Ga(\mu+1)} {\left( \frac{1-x}{1+x} \right)}^{{\frac12\mu}}
\F{-\nu, \nu+1}{1+\mu}{\frac{1-x}{2}}
\\ \hphantom{\Q_\nu^\mu (x) = \frac{\pi}{4 \sin(\pi\mu)} \Bigg[}
{} + \frac{\expe^{-{\rm i}\pi\mu}+\expe^{{\rm i}\pi\mu}}
{\Ga(1 - \mu)} {\left( \frac{1+x}{1-x} \right)}^{{\frac12\mu}}
\F{-\nu, \nu+1}{1 - \mu}{\frac{1-x}{2}}\Bigg]
\\ \hphantom{\Q_\nu^\mu (x)}
{}= \frac{\pi}{2 \sin(\pi\mu)} \Bigg[ \frac{\cos(\pi\mu)}{\Ga(1 - \mu)}
{\left( \frac{1+x}{1-x} \right)}^{{\frac12\mu}}
\F{-\nu,\nu+1}{1 - \mu}{\frac{1-x}{2}}
\\ \hphantom{\Q_\nu^\mu (x)= \frac{\pi}{2 \sin(\pi\mu)} \Bigg[}
{} - \frac{\Ga(\nu+\mu+1)}{\Ga(\nu-\mu+1) \Ga(\mu+1)}
{\left( \frac{1-x}{1+x} \right)}^{{\frac12\mu}}
\F{-\nu, \nu+1}{1+\mu}{\frac{1-x}{2}}\Bigg],
\end{gather*}
which is true for all $x\in(-1,1)$. Since both sides of the claimed identity are analytic functions on $D_1$, the
identity is true for all $x\in D_1$.
\end{proof}

\begin{thm}\label{tI2}
Let $x \in D_1$, $\nu \in \C$, $\mu \in \C \setminus \Z$, such
that $\nu + \mu \notin -\N$. Then
\begin{gather*}
\Q_\nu^\mu (x) = -\frac12\Bigg[\cos(\pi\nu) \Ga(\mu)
{\left( \frac{1-x}{1+x} \right)}^{{\frac12\mu}}
\F{-\nu, \nu+1}{1 - \mu}{\frac{1+x}{2}}
\\ \hphantom{\Q_\nu^\mu (x) =}
{}+\cos(\pi(\nu + \mu))\Ga(-\mu)
\frac{\Ga(\nu+\mu+1)}{\Ga(\nu-\mu+1)}
{\left( \frac{1+x}{1-x} \right)}^{{\frac12\mu}}
\F{-\nu, \nu+1}{1+\mu}{\frac{1+x}{2}}\Bigg].
\end{gather*}
\end{thm}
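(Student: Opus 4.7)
The plan is to follow the same strategy used in the proof of Theorem~\ref{tI1}, but to start from the companion hypergeometric representation of $\expe^{-{\rm i}\pi\mu}Q_\nu^\mu(z)$, $z\in D_2$, whose Gauss argument is $(1+z)/2$. Explicitly, I would invoke the MOS entry for $Q_\nu^\mu(z)$ that corresponds to $w_2$ (entry number $2+18$), which writes
\begin{gather*}
\expe^{-{\rm i}\pi\mu}Q_\nu^\mu(z) = A_{\nu,\mu}\left(\frac{z+1}{z-1}\right)^{\frac12\mu}\F{-\nu,\nu+1}{1-\mu}{\frac{1+z}{2}}\\
{}+ B_{\nu,\mu}\left(\frac{z-1}{z+1}\right)^{\frac12\mu}\F{-\nu,\nu+1}{1+\mu}{\frac{1+z}{2}}
\end{gather*}
as a combination of the two Kummer solutions of \eqref{Legendre} based at $z=-1$, with coefficients $A_{\nu,\mu}$ and $B_{\nu,\mu}$ that are products of $\Ga$-values and exponential phases such as $\expe^{\pm{\rm i}\pi\nu}$ and $\expe^{\pm{\rm i}\pi(\nu+\mu)}$. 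If the formula is not directly available in MOS in the required shape, I would derive it by combining the $(1-z)/2$-representation used in the proof of Theorem~\ref{tI1} with a connection formula between $Q_\nu^\mu(z)$ and the $P_\nu^{\pm\mu}(-z)$ basis at $z=-1$, via \cite[Section~14.9]{NIST:DLMF}.

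Then I apply \eqref{FerrersQ}. For $x\in(-1,1)$ the hypergeometric series are continuous across the interval, since $(1+z)/2\to(1+x)/2\in(0,1)$ regardless of the sign of $\mathop{\rm Im}z$. The power factors, by contrast, pick up opposite phases from above and below: $(z+1)^{\mu/2}$ passes continuously to $(1+x)^{\mu/2}$, but $z-1$ crosses the principal-value branch cut, giving $(z-1)^{\mu/2}\to \expe^{\pm{\rm i}\pi\mu/2}(1-x)^{\mu/2}$ for $z\to x\pm{\rm i}0$. Substituting these limits into \eqref{FerrersQ} and multiplying by the explicit $\expe^{\pm\frac12\pi{\rm i}\mu}$ prefactors therein, the two contributions on the right-hand side reorganise into real half-sums.

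The crux, and the reason the conclusion differs qualitatively from Theorem~\ref{tI1}, is the phase bookkeeping. The $\expe^{\pm{\rm i}\pi\nu}$ inside $A_{\nu,\mu}$ and the $\expe^{\pm{\rm i}\pi(\nu+\mu)}$ inside $B_{\nu,\mu}$ combine with the branch phases $\expe^{\pm{\rm i}\pi\mu/2}$ of the $(z-1)^{\mu/2}$ factors and the $\expe^{\pm\frac12\pi{\rm i}\mu}$ weights of \eqref{FerrersQ} to produce $\cos(\pi\nu)$ in front of the first term and $\cos(\pi(\nu+\mu))$ in front of the second, together with the $\Ga(\mu)$ and $\Ga(-\mu)\Ga(\nu+\mu+1)/\Ga(\nu-\mu+1)$ coefficients shown in the statement. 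Once the identity is established on $(-1,1)$, both sides are analytic on $D_1$, so it extends by analytic continuation to all $x\in D_1$.
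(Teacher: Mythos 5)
Your proposal is correct and follows essentially the same route as the paper: the paper's proof invokes exactly the MOS entry you describe (Entry 20, p.~160, with argument $(1+z)/2$ and sign-dependent phases $\expe^{\mp{\rm i}\pi\nu}$ for $\pm\mathop{\rm Im}z>0$), substitutes into \eqref{FerrersQ}, and lets the branch phases of $\big((z\mp1)/(z\pm1)\big)^{\frac12\mu}$ combine with the $\expe^{\pm\frac12\pi{\rm i}\mu}$ weights to yield the factors $\cos(\pi\nu)$ and $\cos(\pi(\nu+\mu))$. The only cosmetic difference is that in the actual entry both coefficients carry the same phase $\expe^{\mp{\rm i}\pi\nu}$, with the extra $\expe^{\mp{\rm i}\pi\mu}$ of the second term arising from the power factor, but your aggregate phase bookkeeping and the analytic continuation to $D_1$ match the paper.
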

\begin{proof}
According to~\cite[Entry 20, p.~160]{MOS}, the associated Legendre function
of the second kind satisfies
\begin{gather*}
\expe^{-{\rm i}\pi\mu} Q_\nu^\mu (z) = -\frac12 \expe^{\mp{\rm i}\pi\nu} \Ga(\mu)
{\left( \frac{z-1}{z+1} \right)}^{{\frac12\mu}}
\F{-\nu, \nu+1}{1 - \mu}{\frac{1 + z}{2}}\nonumber
\\ \hphantom{\expe^{-{\rm i}\pi\mu} Q_\nu^\mu (z) =}
{} - \frac12 \expe^{\mp{\rm i}\pi\nu} \frac{\Ga(\nu+\mu+1) \Ga(-\mu)}{\Ga(\nu-\mu+1)}
{\left( \frac{z+1}{z-1} \right)}^{{\frac12\mu}}
\F{-\nu, \nu+1}{1+\mu}{\frac{1 + z}{2}}\!,
\end{gather*}
when $\pm\mathop{\rm Im} z>0$.
Using \eqref{FerrersQ}, we derive from this equation an expression for $\Q_\nu^\mu{(x)}$, namely
\begin{gather*}
\Q_\nu^\mu (x) = -\frac{1}{4} \Bigg[ \expe^{-{\rm i}\pi\nu} \Ga(\mu)
{\left( \frac{1-x}{1+x} \right)}^{{\frac12\mu}}
\F{-\nu, \nu+1}{1 - \mu}{\frac{1 + x}{2}}
\\ \hphantom{\Q_\nu^\mu (x) = -\frac{1}{4} \Bigg[}
{} + {\expe^{-{\rm i}\pi(\nu+\mu)}} \frac{\Ga(\nu+\mu+1) \Ga(-\mu)}{\Ga(\nu-\mu+1)}
{\left( \frac{1+x}{1-x} \right)}^{{\frac12\mu}}
\F{-\nu, \nu+1}{1+\mu}{\frac{1 + x}{2}}
\\ \hphantom{\Q_\nu^\mu (x) = -\frac{1}{4} \Bigg[}
{} + \expe^{{\rm i}\pi\nu} \Ga(\mu)
{\left( \frac{1-x}{1+x} \right)}^{{\frac12\mu}}
\F{-\nu, \nu+1}{1 - \mu}{\frac{1 + x}{2}}
\\ \hphantom{\Q_\nu^\mu (x) = -\frac{1}{4} \Bigg[}
{}+{\expe^{{\rm i}\pi(\nu+\mu)}}
\frac{\Ga(\nu+\mu+1) \Ga(-\mu)}{\Ga(\nu-\mu+1)}
{\left( \frac{1+x}{1-x} \right)}^{{\frac12\mu}}
\F{-\nu, \nu+1}{1+\mu}{\frac{1 + x}{2}} \Bigg]
\\ \hphantom{\Q_\nu^\mu (x)}
{}= -\frac12 \Bigg[ \cos(\pi\nu) \Ga(\mu)
{\left( \frac{1-x}{1+x} \right)}^{{\frac12\mu}}
\F{-\nu, \nu+1}{1 - \mu}{\frac{1 + x}{2}}
\\ \hphantom{\Q_\nu^\mu (x)= -\frac12 \Bigg[}
{}+ \cos(\pi(\nu + \mu)) \frac{\Ga(\nu+\mu+1) \Ga(-\mu)}{\Ga(\nu-\mu+1)}
\left( \frac{1+x}{1-x} \right)^{\frac\mu2}
\F{-\nu, \nu+1}{1+\mu}{\frac{1 + x}2}
\Bigg].
\end{gather*}
This completes the proof.
\end{proof}

\begin{thm}\label{tI3}
Let $x \in D_1$, $\nu \in \C$, $\mu \in \C \setminus \Z$,
such that $\nu + \mu \notin -\N$. Then
\begin{gather*}
\Q_\nu^\mu (x) = \frac{(1 + x)^\nu}{2^{\nu + 1}} \Bigg[
\cos(\pi\mu) \Ga(\mu) {\left( \frac{1+x}{1-x} \right)}^{{\frac12\mu}}
\F{-\nu, -\nu - \mu}{1 - \mu}{\frac{x-1}{x+1}}\nonumber
\\ \hphantom{\Q_\nu^\mu (x) = \frac{(1 + x)^\nu}{2^{\nu + 1}} \Bigg[}
{}+ \frac{\Ga(\nu+\mu+1) \Ga(-\mu)}{\Ga(\nu-\mu+1)}
{\left( \frac{1 - x}{1 + x} \right)}^{{\frac12\mu}}
\F{-\nu, \mu - \nu}{1+\mu}{\frac{x-1}{x+1}} \Bigg].
\end{gather*}
\end{thm}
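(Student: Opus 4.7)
The cleanest route is to deduce this representation directly from Theorem~\ref{tI1} by applying Pfaff's transformation \eqref{Pfaff1} to each of the two hypergeometric functions appearing there, since the argument pair $\bigl((1-x)/2,\;(x-1)/(x+1)\bigr)$ is exactly the one linked by that transformation. Specifically, taking $a=-\nu$, $b=\nu+1$, $c=1\mp\mu$, and $w=(1-x)/2$ in \eqref{Pfaff1} yields $w/(w-1)=(x-1)/(x+1)$ and $(1-w)^{-a}=((1+x)/2)^\nu$, so that
\begin{gather*}
\F{-\nu,\nu+1}{1-\mu}{\frac{1-x}{2}} = \left(\frac{1+x}{2}\right)^{\nu} \F{-\nu,-\nu-\mu}{1-\mu}{\frac{x-1}{x+1}},
\end{gather*}
and analogously the second hypergeometric function in Theorem~\ref{tI1} becomes $((1+x)/2)^{\nu}\F{-\nu,\mu-\nu}{1+\mu}{(x-1)/(x+1)}$. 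Substituting into Theorem~\ref{tI1} pulls out the common factor $((1+x)/2)^\nu$, producing the prefactor $(1+x)^\nu/2^{\nu+1}$ announced in the statement.

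The remaining task is to convert the gamma quotients inherited from Theorem~\ref{tI1} into those appearing in the target. Using Euler's reflection formula \eqref{Eulerreflect}, I rewrite $\pi/(\sin(\pi\mu)\Ga(1-\mu))=\Ga(\mu)$, which turns the first coefficient $\pi\cos(\pi\mu)/(2\sin(\pi\mu)\Ga(1-\mu))$ into $\tfrac12\cos(\pi\mu)\Ga(\mu)$. For the second coefficient I use the companion form $-\pi/(\sin(\pi\mu)\Ga(1+\mu))=\Ga(-\mu)$, which both absorbs the minus sign of Theorem~\ref{tI1} and converts the second prefactor into $\tfrac12\Ga(-\mu)\Ga(\nu+\mu+1)/\Ga(\nu-\mu+1)$, exactly as required.

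None of these steps presents a genuine obstacle; the only item deserving care is the domain of validity. The hypotheses of Theorem~\ref{tI1} are met under the same assumptions $x\in D_1$, $\nu\in\C$, $\mu\in\C\setminus\Z$, $\nu+\mu\notin-\N$, so the starting identity holds. For $x\in D_1$ one has $(1-x)/2\in\C\setminus[1,\infty)$ (since $x\notin(-\infty,-1]$), which is the domain where Pfaff's transformation \eqref{Pfaff1} is stated to apply; the image $(x-1)/(x+1)$ likewise lies in $\C\setminus[1,\infty)$; and $(1+x)/2\in\C\setminus(-\infty,0]$ on $D_1$, so the principal value $((1+x)/2)^\nu$ is well defined and single-valued. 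Thus every manipulation stays within the stated analytic domain, and the asserted identity holds throughout $D_1$.
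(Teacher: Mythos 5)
Your proof is correct, and it takes a genuinely different route from the one the paper gives. The paper's own proof of Theorem~\ref{tI3} is a one-line appeal to the tabulated representation of $\expe^{-{\rm i}\pi\mu}Q_\nu^\mu(z)$ with argument $(z-1)/(z+1)$ in Magnus--Oberhettinger--Soni (Entry 21, p.~160), combined, as in the proofs of Theorems~\ref{tI1} and~\ref{tI2}, with the boundary-value formula \eqref{FerrersQ} and analytic continuation to $D_1$. You instead deduce the result from Theorem~\ref{tI1} by applying Pfaff's transformation \eqref{Pfaff1} with $a=-\nu$, $b=\nu+1$, $c=1\mp\mu$ to both hypergeometric functions, which correctly produces the parameter pairs $(-\nu,-\nu-\mu)$ and $(-\nu,\mu-\nu)$ and the common factor $((1+x)/2)^{\nu}$ giving the prefactor $(1+x)^{\nu}/2^{\nu+1}$; your gamma bookkeeping via \eqref{Eulerreflect}, namely $\pi/(\sin(\pi\mu)\Ga(1-\mu))=\Ga(\mu)$ and $-\pi/(\sin(\pi\mu)\Ga(1+\mu))=\Ga(-\mu)$ (the latter absorbing the minus sign in Theorem~\ref{tI1}), is also right. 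This is precisely the alternative the authors themselves flag in the remark following Theorem~\ref{tI6} (``by applying Pfaff's transformations \eqref{Pfaff1} and \eqref{Pfaff2}, the results of Theorems~\ref{tI3}, \ref{tI4} and \ref{tI6} follow immediately from those of Theorems~\ref{tI1}, \ref{tI2} and \ref{tI5}''), but which they do not use as the proof. What your route buys: it is self-contained within the paper's already-proved results, needs no fresh appeal to the MOS table or to the limit formula \eqref{FerrersQ}, and inherits the domain of validity from Theorem~\ref{tI1}, with your checks that $(1-x)/2$ and $(x-1)/(x+1)$ avoid $[1,\infty)$ and that $(1+x)/2\in\C\setminus(-\infty,0]$ on $D_1$ justifying the principal-value manipulations. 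What the paper's route buys: each of the eighteen representations is obtained uniformly and independently from the Legendre-function tables, so the theorems do not depend on one another and the table entries are verified one by one.
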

\begin{proof}
This follows from~\cite[Entry 21, p.~160]{MOS}.
\end{proof}

\begin{thm}\label{tI4}
Let $x \in D_1$, $\nu \in \C$, $\mu \in \C \setminus \Z$,
such that $\nu + \mu \notin -\N$. Then
\begin{gather*}
\Q_\nu^\mu(x)=\frac{-2^\nu}{(1-x)^{\nu+1}}\Bigg[
\Ga(\mu)\cos(\pi\nu)\left(\frac{1-x}{1+x}\right)^{{\frac12\mu}}
\F{\nu+1,\nu-\mu+1}{1-\mu}{\frac{x+1}{x-1}}\nonumber
\\ \hphantom{\Q_\nu^\mu(x)=}
{}+\frac{\Ga(-\mu)\cos(\pi(\nu+\mu))\Ga(\nu+\mu+1)}{\Ga(\nu-\mu+1)}
\left(\frac{1+x}{1-x}\right)^{{\frac12\mu}}\F{\nu+1,\nu+\mu+1}{1+\mu}{\frac{x+1}{x-1}}
\Bigg].
\end{gather*}
\end{thm}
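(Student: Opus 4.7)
The proof follows the template of Theorems~\ref{tI2} and~\ref{tI3}: I would invoke the MOS entry for the associated Legendre function $Q_\nu^\mu(z)$ with Olbricht's fourth argument $w_4 = \frac{z+1}{z-1}$ (Entry~22 of \cite[p.~160]{MOS}) and then apply the limit representation \eqref{FerrersQ}. That entry expresses $\expe^{-{\rm i}\pi\mu}Q_\nu^\mu(z)$, for $\pm\mathop{\rm Im} z > 0$, as the prefactor $\frac{-2^\nu\expe^{\mp{\rm i}\pi\nu}}{(1-z)^{\nu+1}}$ times the sum of the two Olbricht solutions $\Ga(\mu)\bigl(\tfrac{z-1}{z+1}\bigr)^{\frac12\mu}\F{\nu+1,\nu-\mu+1}{1-\mu}{\tfrac{z+1}{z-1}}$ and $\tfrac{\Ga(-\mu)\Ga(\nu+\mu+1)}{\Ga(\nu-\mu+1)}\bigl(\tfrac{z+1}{z-1}\bigr)^{\frac12\mu}\F{\nu+1,\nu+\mu+1}{1+\mu}{\tfrac{z+1}{z-1}}$, with the sign chosen according to $\mathop{\rm Im} z$.

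Next I would specialize to $z=x\pm{\rm i}0$ for $x\in(-1,1)$ and combine the two boundary values with the weights $\tfrac12\expe^{\mp\frac12{\rm i}\pi\mu}$ dictated by \eqref{FerrersQ}. Exactly as in the proof of Theorem~\ref{tI2}, the complex powers $\bigl(\tfrac{z-1}{z+1}\bigr)^{\mu/2}$ and $\bigl(\tfrac{z+1}{z-1}\bigr)^{\mu/2}$ reduce to the real-argument powers that appear in the statement, multiplied by boundary phases $\expe^{\pm{\rm i}\pi\mu/2}$ and $\expe^{\mp{\rm i}\pi\mu/2}$ (since $\arg(z-1)\to\pm\pi$ while $z+1>0$), while $\frac{z+1}{z-1}$ approaches the real value $\frac{x+1}{x-1}\in(-\infty,0)$ without branch ambiguity. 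After cancellation with the $\expe^{\mp\frac12{\rm i}\pi\mu}$ weights, the first summand carries only the factor $\expe^{\mp{\rm i}\pi\nu}$, averaging to $\cos(\pi\nu)$, whereas the second picks up an extra $\expe^{\mp{\rm i}\pi\mu}$, shifting $\expe^{\mp{\rm i}\pi\nu}$ to $\expe^{\mp{\rm i}\pi(\nu+\mu)}$ and averaging to $\cos(\pi(\nu+\mu))$. This produces exactly the claimed identity for $x\in(-1,1)$.

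To extend to $x\in D_1$, I would argue analyticity of both sides on $D_1$. The left-hand side $\Q_\nu^\mu$ is analytic there by construction; the hypergeometric argument $\frac{x+1}{x-1}$ maps $D_1$ into $\C\setminus[1,\infty)$, since the preimage of $[1,\infty)$ under this map is $(1,\infty)\subset\C\setminus D_1$; and the principal-branch powers $(1-x)^{\nu+1}$, $\bigl(\tfrac{1-x}{1+x}\bigr)^{\mu/2}$, $\bigl(\tfrac{1+x}{1-x}\bigr)^{\mu/2}$ are unambiguous on $D_1$ because the non-positive preimages of the corresponding bases lie on $[1,\infty)\cup(-\infty,-1]$, the complement of $D_1$. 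Analytic continuation then promotes the identity from the real interval to all of $D_1$.

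The only delicate step worth flagging is the phase bookkeeping that produces the asymmetric pairing $\cos(\pi\nu)$ with the first summand and $\cos(\pi(\nu+\mu))$ with the second; but this is precisely the mechanism already exercised in the proof of Theorem~\ref{tI2}, so no essentially new difficulty is expected.
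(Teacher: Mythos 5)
Your proposal is correct and follows essentially the same route as the paper, whose proof of Theorem~\ref{tI4} is just the citation of \cite[Entry 22, p.~160]{MOS} combined (implicitly) with the limit representation \eqref{FerrersQ} and analytic continuation to $D_1$, exactly the mechanism you spell out by analogy with Theorem~\ref{tI2}; your phase bookkeeping ($\cos(\pi\nu)$ with the $\bigl(\tfrac{1-x}{1+x}\bigr)^{\mu/2}$ term, $\cos(\pi(\nu+\mu))$ with the other) and the branch-cut checks for $\tfrac{x+1}{x-1}$ and the powers on $D_1$ are all sound. The only cosmetic caveat is that your quoted form of Entry~22, with prefactor $-2^\nu \expe^{\mp{\rm i}\pi\nu}(1-z)^{-\nu-1}$ split by half-plane, may differ in presentation from the tabulated entry (it equals $2^\nu(z-1)^{-\nu-1}$ on both half-planes), but the formula you use is correct, e.g.\ it follows from Entry~20 by Pfaff's transformation \eqref{Pfaff2}, as the paper itself remarks.
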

\begin{proof}
This follows from~\cite[Entry 22, p.~160]{MOS}.
\end{proof}

\begin{thm}\label{tI5}
Let $\nu,\mu\in\C$ such that $\nu+\mu\notin -\N_0$, $2\nu\notin\Z$. If $\pm \mathop{\rm Im} x>0$ then
\begin{gather*}
\Q_\nu^\mu(x)= 2^\nu \left(\cos(\pi\mu)\mp {\rm i}\frac{\sin(\pi(\mu-\nu))}{2\cos(\pi\nu)}\right) \frac{\Ga(\nu+1)\Ga(\nu+\mu+1)}{\Ga(2\nu+2)}\nonumber
\\ \hphantom{\Q_\nu^\mu(x)=2^\nu}
{}\times (1+x)^{-1+\frac12\mu-\nu}(1-x)^{-\frac12\mu}\F{\nu-\mu+1,\nu+1}{2\nu+2}{\frac{2}{1+x}}\nonumber
\\ \hphantom{\Q_\nu^\mu(x)=}
{} \pm {\rm i}\pi 2^{-\nu-2}\sec(\pi\nu)\frac{\Ga(-\nu)}{\Ga(-2\nu)\Ga(\nu-\mu+1)}\nonumber
\\ \hphantom{\Q_\nu^\mu(x)=2^\nu}
{}\times (1+x)^{\nu+\frac12\mu}(1-x)^{-\frac12\mu}\F{-\nu,-\nu-\mu}{-2\nu}{\frac{2}{1+x}}\!.
\end{gather*}
\end{thm}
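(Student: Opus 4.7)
The plan is to combine a known hypergeometric representation of $\expe^{-{\rm i}\pi\mu}Q_\nu^\mu(z)$ with argument $2/(1+z)$, together with its companion obtained by replacing $\nu$ by $-\nu-1$, and feed these into one of the connection relations \eqref{FerrersQc} or \eqref{FerrersQf}. Unlike in Theorems~\ref{tI1}--\ref{tI4}, the argument $w_5=2/(1+x)$ lies in $[1,\infty)$ whenever $x\in(-1,1)$, so one cannot first derive the identity on the real interval and then analytically continue via \eqref{FerrersQ}. Instead we must work off the real axis from the outset and use a connection relation that expresses $\Q_\nu^\mu(x)$ at a single point in terms of $Q_\nu^\mu(x)$ and $Q_{-\nu-1}^\mu(x)$ at the same point; this is exactly what \eqref{FerrersQc} and \eqref{FerrersQf} supply, for the upper and lower half planes respectively.

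First I would record the two ingredients. The relevant entry in~\cite[p.~160]{MOS} (alternatively derivable from \eqref{defQ} by a Pfaff transformation and the duplication formula) expresses $\expe^{-{\rm i}\pi\mu}Q_\nu^\mu(z)$, for $z\in D_2$, as $2^\nu\Ga(\nu+1)\Ga(\nu+\mu+1)\Ga(2\nu+2)^{-1}$ times $(z+1)^{-\nu-1+\mu/2}(z-1)^{-\mu/2}\F{\nu+1,\nu-\mu+1}{2\nu+2}{2/(1+z)}$. Substituting $\nu\mapsto -\nu-1$ yields the parallel representation for $Q_{-\nu-1}^\mu(z)$ with hypergeometric parameters $\F{-\nu,-\nu-\mu}{-2\nu}{2/(1+z)}$ and denominator $\Ga(-2\nu)$. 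The restrictions $2\nu\notin\Z$ and $\nu+\mu\notin -\N_0$ in the statement are exactly what is required for both formulas to be simultaneously meaningful and for the $\cos(\pi\nu)$ appearing in the denominators of \eqref{FerrersQc} and \eqref{FerrersQf} to be nonzero.

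I would then substitute these two representations into \eqref{FerrersQc} (for $+\mathop{\rm Im} x>0$) and \eqref{FerrersQf} (for $-\mathop{\rm Im} x>0$). The coefficient of the first hypergeometric term assembles directly: the factor $\cos(\pi\mu)\mp{\rm i}\sin(\pi(\mu-\nu))/(2\cos(\pi\nu))$ comes from the bracket in \eqref{FerrersQc}/\eqref{FerrersQf}, the $\expe^{-\frac12{\rm i}\pi\mu}$ prefactor converts $(x-1)^{-\mu/2}$ into $(1-x)^{-\mu/2}$ via the principal-branch convention, and the remaining $2^\nu\Ga(\nu+1)\Ga(\nu+\mu+1)/\Ga(2\nu+2)$ comes from the $Q_\nu^\mu$ representation; this matches the first line of the claim.

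The main obstacle lies in verifying the coefficient of the second hypergeometric term. From the $Q_{-\nu-1}^\mu$ formula the prefactor is $2^{-\nu-1}\Ga(-\nu)\Ga(-\nu+\mu)/\Ga(-2\nu)$, and in \eqref{FerrersQc}/\eqref{FerrersQf} it is multiplied by $\pm{\rm i}\expe^{-\frac12{\rm i}\pi\mu}\sin(\pi(\mu-\nu))/(2\cos(\pi\nu))$. Applying Euler's reflection formula \eqref{Eulerreflect} in the guise $\Ga(-\nu+\mu)\sin(\pi(\nu-\mu))=\pi/\Ga(\nu-\mu+1)$ collapses the product to $\pm{\rm i}\pi 2^{-\nu-2}\sec(\pi\nu)\Ga(-\nu)/(\Ga(-2\nu)\Ga(\nu-\mu+1))$, matching the second line. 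The delicate bookkeeping is in three simultaneous sign choices — the half-plane ($\mathop{\rm Im} x\gtrless 0$), the $\pi\mapsto-\pi$ in the branch of $(x-1)^{-\mu/2}$, and the reflection sign — all of which must align with the $\pm$ in the statement. A clean route is to complete the upper-half-plane case in full and then observe that switching to \eqref{FerrersQf} flips exactly those signs, so the two cases merge into the single stated identity. No separate analytic-continuation step is required, since both sides are already holomorphic in $x$ on each open half plane.
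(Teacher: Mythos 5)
Your proposal is correct and takes essentially the same route as the paper: represent $\Q_\nu^\mu(x)$ by \eqref{FerrersQc} for $\mathop{\rm Im} x>0$ and by \eqref{FerrersQf} for $\mathop{\rm Im} x<0$, insert \cite[Entry~23, p.~161]{MOS} together with its $\nu\mapsto-\nu-1$ companion, track the principal branch of $(x-1)^{-\frac12\mu}$ versus $(1-x)^{-\frac12\mu}$ in each half-plane, and simplify the second coefficient with \eqref{Eulerreflect} --- which is exactly the computation the paper compresses into ``the desired result follows.'' One minor slip: the reflection identity should read $\Ga(\mu-\nu)\sin(\pi(\mu-\nu))=\pi/\Ga(\nu-\mu+1)$ rather than with $\sin(\pi(\nu-\mu))$, but with that sign corrected your coefficients match the stated result.
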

\begin{proof}
We represent $\Q_\nu^\mu(x)$ by \eqref{FerrersQc}
if $\mathop{\rm Im} x>0$ and by
\eqref{FerrersQf}
if $\mathop{\rm Im} x<0$. Then we use~\cite[Entry 23, p.~161]{MOS}, namely
\begin{gather*}
\expe^{-{\rm i}\pi\mu} Q_\nu^\mu (x) =
2^\nu\frac{\Ga(\nu+1) \Ga(\nu+\mu+1)}{\Ga(2\nu+2)}\frac{(x+1)^{{\frac12\mu}-\nu-1}}{(x-1)^{{\frac12\mu}}}
\F{\nu+1,\nu-\mu+1}{2\nu+2}{\frac{2}{1+x}}\!,
\end{gather*}
for $x\in D_2$
and the desired result follows.
\end{proof}

\begin{thm}\label{tI6}
Let $\nu,\mu\in\C$ such that $\nu+\mu\notin -\N_0$, $2\nu\notin\Z$. If $\pm \mathop{\rm Im} x>0$ then
\begin{gather*}
\Q_\nu^\mu(x)= 2^\nu \expe^{\mp\pi {\rm i}(\nu+1)}\left(\cos(\pi\mu)\mp {\rm i}\frac{\sin(\pi(\mu-\nu))} {2\cos(\pi\nu)}\right) \frac{\Ga(\nu+1)\Ga(\nu+\mu+1)}{\Ga(2\nu+2)}\nonumber
\\ \hphantom{\Q_\nu^\mu(x)= 2^\nu}
{}\times (1+x)^{\frac12\mu}(1-x)^{{-\nu-\frac12\mu-1}} \F{\nu+\mu+1,\nu+1}{2\nu+2}{\frac{2}{1-x}}\nonumber
\\ \hphantom{\Q_\nu^\mu(x)= }
 \pm {\rm i}\pi 2^{-\nu-2}\expe^{\pm \pi {\rm i}\nu}\sec(\pi\nu) \frac{\Ga(-\nu)}{\Ga(-2\nu)\Ga(\nu-\mu+1)}\nonumber
\\ \hphantom{\Q_\nu^\mu(x)= 2^\nu}
{}\times (1+x)^{\frac12\mu}(1-x)^{\nu-\frac12\mu}\F{-\nu,\mu-\nu}{-2\nu}{\frac{2}{1-x}}\!.
\end{gather*}
\end{thm}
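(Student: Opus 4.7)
The proof will closely parallel that of Theorem \ref{tI5}, differing only in the choice of the MOS entry used (the companion entry to Entry 23 with argument $\frac{2}{1+x}$, namely the one with argument $\frac{2}{1-x}$, which should appear as Entry 24 on p.~161) and in the careful bookkeeping of branch factors that appear when the variables of that entry, originally written for $x \in D_2$, are interpreted on the two sides of the Ferrers cut.

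My plan is to represent $\Q_\nu^\mu(x)$ via the connection formulas derived earlier: use \eqref{FerrersQc} when $\mathop{\rm Im} x > 0$ and \eqref{FerrersQf} when $\mathop{\rm Im} x < 0$. Both identities express $\Q_\nu^\mu(x)$ as a linear combination of $Q_\nu^\mu(x)$ and $Q_{-\nu-1}^\mu(x)$ with explicit coefficients in terms of $\cos(\pi\mu)$, $\sin(\pi(\mu-\nu))$, and $\cos(\pi\nu)$ (this is the source of the factor $\cos(\pi\mu) \mp \mathrm{i}\,\sin(\pi(\mu-\nu))/(2\cos(\pi\nu))$ visible in the statement, together with the restriction $2\nu \notin \Z$). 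Next, I substitute the MOS hypergeometric representation of $\mathrm{e}^{-\mathrm{i}\pi\mu}Q_\nu^\mu(x)$ with argument $\frac{2}{1-x}$ into the first piece, and apply the same representation with $\nu$ replaced by $-\nu-1$ to the second piece. The symmetry $\Gamma(2\nu+2) \leftrightarrow \Gamma(-2\nu)$ and the reflection/duplication identities for $\Gamma$ (Section~\ref{sec2}) collapse the prefactor of the $Q_{-\nu-1}^\mu$ term into the compact form $\pm\mathrm{i}\pi\, 2^{-\nu-2} \sec(\pi\nu)\,\Gamma(-\nu)/[\Gamma(-2\nu)\Gamma(\nu-\mu+1)]$, exactly as in Theorem~\ref{tI5}.

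The main technical point — the only place where this proof meaningfully differs from that of Theorem~\ref{tI5} — is the interpretation of the MOS factors $(x-1)^{-\nu-\frac12\mu-1}$ and $(x-1)^{\nu-\frac12\mu}$ when $x$ lies in $D_1$ rather than in $D_2$. For $\pm\mathop{\rm Im} x > 0$, one has $(x-1)^\alpha = \mathrm{e}^{\pm\mathrm{i}\pi\alpha}(1-x)^\alpha$ under the principal branch convention fixed in Section~\ref{sec2}. Applied to the exponents $\alpha = -\nu - \frac12\mu - 1$ in the $Q_\nu^\mu$-term and $\alpha = \nu - \frac12\mu$ in the $Q_{-\nu-1}^\mu$-term, the half-integer $\mu$-contributions cancel against the explicit $\mathrm{e}^{-\mathrm{i}\pi\mu}$ factors present in \eqref{FerrersQc}/\eqref{FerrersQf}, leaving precisely the phases $\mathrm{e}^{\mp\pi\mathrm{i}(\nu+1)}$ in the first line of the theorem and $\mathrm{e}^{\pm\pi\mathrm{i}\nu}$ in the second. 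This branch-tracking is the step that requires the most care; once it is done correctly, the remaining algebra is a straightforward assembly using Euler's reflection formula \eqref{Eulerreflect} and the duplication formula to identify the coefficients with those stated.

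Finally, I would remark that, as in Theorem~\ref{tI5}, the hypothesis $\nu + \mu \notin -\N_0$ and $2\nu \notin \Z$ is exactly what is needed so that every Gamma factor and every $\sec(\pi\nu)$ appearing in the proof is well defined, and the condition $\pm\mathop{\rm Im} x > 0$ is imposed precisely because \eqref{FerrersQc} and \eqref{FerrersQf} apply on the upper and lower half-planes respectively. No analytic continuation from $(-1,1)$ is required because the branches of $(1 \pm x)^{\pm \frac12\mu}$ and $(1-x)^{-\nu - \frac12\mu - 1}$ used on the right-hand side are, by construction, analytic on each half-plane.
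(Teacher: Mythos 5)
Your proposal is correct and follows essentially the same route as the paper's proof: represent $\Q_\nu^\mu(x)$ via \eqref{FerrersQc} for $\mathop{\rm Im} x>0$ and \eqref{FerrersQf} for $\mathop{\rm Im} x<0$, insert the MOS Entry~24 representation with argument $2/(1-x)$ (and its $\nu\mapsto-\nu-1$ version for $Q_{-\nu-1}^\mu$), and track the branch phases $(x-1)^\alpha=\expe^{\pm{\rm i}\pi\alpha}(1-x)^\alpha$. Your write-up simply makes explicit the phase bookkeeping and reflection-formula simplification that the paper leaves implicit in ``the desired result follows.''
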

\begin{proof}
We represent $\Q_\nu^\mu(x)$ by \eqref{FerrersQc}
if $\mathop{\rm Im} x>0$ and by \eqref{FerrersQf}
if $\mathop{\rm Im} x<0$. Then we use~\cite[Entry 24, p.~161]{MOS}, namely
\begin{gather*}
\expe^{-{\rm i}\pi\mu}
Q_\nu^\mu (x) = 2^\nu\frac{\Ga(\nu+1)\Ga(\nu+\mu+1)}{\Ga(2\nu+2)}
\frac{(x+1)^{\frac12\mu}}{(x-1)^{\nu+\frac12\mu+1}}
\F{\nu+1,\nu+\mu+1}{2\mu+2}{\frac{2}{1-x}}\!,
\end{gather*}
for $x\in D_2$
and the desired result follows.
\end{proof}

\begin{rem}
The hypergeometric representations of $\Q_\nu^\mu(x)$ given in the previous six theorems can be written
in slightly different forms by applying Euler's transformation~\eqref{Euler}. For instance, the result of Theorem~\ref{tI1} can be written as
\begin{gather*}
\Q_\nu^\mu (x) = \frac{\pi}{2 \sin(\pi\mu)} \Bigg[2^\mu \frac{\cos(\pi\mu)}{\Ga(1 - \mu)}
\big(1-x^2\big)^{-\frac\mu2}\F{\nu-\mu+1, -\mu-\nu}{1 - \mu}{\frac{1-x}{2}} \nonumber
\\ \hphantom{\Q_\nu^\mu (x) = \frac{\pi}{2 \sin(\pi\mu)} \Bigg[}
{}- 2^{-\mu}\frac{\Ga(\nu+\mu+1)}{\Ga(\mu+1)\Ga(\nu-\mu+1)}\big(1-x^2\big)^{\frac\mu2}
\F{\nu\!+\mu\!+1, \mu\!-\nu}{1+\mu}{\frac{1\!-x}{2}} \Bigg].
\end{gather*}
We observe that by applying Pfaff's transformation \eqref{Pfaff1} and~\eqref{Pfaff2},
the results of Theorems~\ref{tI3}, \ref{tI4} and~\ref{tI6}
follow immediately from those of Theo\-rems~\ref{tI1}, \ref{tI2} and~\ref{tI5}, respectively.
\end{rem}

Each of the previous six theorems represents $\Q_\nu^\mu(x)$ as a sum of two functions each of which is itself a solution of the associated Legendre equation \eqref{Legendre}. In~the following we identify these solutions.

\begin{rem}
In Theorems~\ref{tI1} and~\ref{tI3}, $\Q_\nu^\mu(x)$ is expressed as a linear combination of $\P_\nu^\mu(x)$ and $\P_\nu^{-\mu}(x)$.
This follows from the connection formula
\cite[cf.~equation~(14.9.2)]{NIST:DLMF}
\begin{gather}\label{connect1}
\frac2\pi \sin(\pi\mu)\Q_\nu^\mu(x) =\cos(\pi\mu)\P_\nu^\mu(x)-\frac{\Ga(\nu+\mu+1)}{\Ga(\nu-\mu+1)}\P_\nu^{-\mu}(x),
\end{gather}
and the hypergeometric representation~\cite[equation~(14.3.1)]{NIST:DLMF}
of the Ferrers function of the first kind
\begin{gather}\label{hyprep}
\P_\nu^\mu(x)=\left(\frac{1+x}{1-x}\right)^{\frac12\mu} \frac{1}{\Ga(1-\mu)}\F{-\nu,\nu+1}{1-\mu}{\frac{1-x}{2}}\!.
\end{gather}
Analogously, in Theorems~\ref{tI2} and~\ref{tI4}, $\Q_\nu^\mu(x)$ is written as a linear combination of $\P_\nu^\mu(-x)$
and~$\P_\nu^{-\mu}(-x)$. This follows from the connection relation
\begin{gather*}
\frac2\pi\sin(\pi\mu)\Q_\nu^\mu(x)=-\cos(\pi\nu)\P_\nu^\mu(-x)+\cos(\pi(\nu+\mu))
\frac{\Ga(\nu+\mu+1)}{\Ga(\nu-\mu+1)}\P_\nu^{-\mu}(-x){,}
\end{gather*}
which is a consequence of~\cite[equations~(14.9.7) and~(14.9.10)]{NIST:DLMF}. We {will use} this observation to derive additional hypergeometric representations of $\Q_\nu^\mu(x)$.
\end{rem}

The connection relation~\cite[equation~(14.9.10)]{NIST:DLMF}
\begin{gather*}
\frac2\pi\sin(\pi(\nu+\mu))\Q_\nu^\mu(x)=\cos(\pi(\nu+\mu))\P_\nu^\mu(x)-\P_\nu^\mu(-x),
\end{gather*}
and \eqref{hyprep}
lead to the following result.

\begin{thm}\label{tI7}
Let $x\in D_1$, $\mu,\nu\in \C$ such that $\nu+\mu\notin-\Z$. Then
\begin{gather*}
\Q_\nu^\mu(x)= \frac\pi2 \cot(\pi(\nu+\mu)) \left(\frac{1+x}{1-x}\right)^{\frac12\mu} \frac{1}{\Ga(1-\mu)} \F{-\nu,\nu+1}{1-\mu}{\frac{1-x}2}\nonumber
\\ \hphantom{\Q_\nu^\mu(x)=}
{}-\frac\pi2\csc(\pi(\nu+\mu)) \left(\frac{1-x}{1+x}\right)^{\frac12\mu}\frac{1}{\Ga(1-\mu)}
\F{-\nu,\nu+1}{1-\mu}{\frac{1+x}2}\!.
\end{gather*}
\end{thm}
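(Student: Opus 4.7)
The plan is to follow the hint given in the sentence immediately preceding the statement: I would treat the identity as a direct consequence of the connection formula
$$\frac{2}{\pi}\sin(\pi(\nu+\mu))\,\Q_\nu^\mu(x)=\cos(\pi(\nu+\mu))\,\P_\nu^\mu(x)-\P_\nu^\mu(-x),$$
combined with the hypergeometric representation \eqref{hyprep} of the Ferrers function of the first kind. So the proof reduces to an algebraic manipulation once we allow ourselves these two ingredients.

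First I would solve the connection relation for $\Q_\nu^\mu(x)$ by dividing through by $(2/\pi)\sin(\pi(\nu+\mu))$, which is legal precisely because the hypothesis $\nu+\mu\notin-\Z$ guarantees that this sine does not vanish. This produces
$$\Q_\nu^\mu(x)=\frac{\pi}{2}\cot(\pi(\nu+\mu))\,\P_\nu^\mu(x)-\frac{\pi}{2}\csc(\pi(\nu+\mu))\,\P_\nu^\mu(-x).$$
Next I would substitute \eqref{hyprep} into the first term directly, which immediately supplies the factor $\bigl((1+x)/(1-x)\bigr)^{\mu/2}/\Gamma(1-\mu)$ together with the hypergeometric function of argument $(1-x)/2$, matching the first line of the stated identity.

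For the second term I would apply \eqref{hyprep} with $x$ replaced by $-x$. Because $x\in D_1$, the point $-x$ also lies in $D_1$, so both $1\pm x$ stay away from the rays $(-\infty,0]$ and the principal branches in \eqref{hyprep} remain valid; the substitution produces the factor $\bigl((1-x)/(1+x)\bigr)^{\mu/2}/\Gamma(1-\mu)$ and a hypergeometric function of argument $(1+x)/2$. Collecting the two terms yields exactly the asserted formula.

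The only genuinely delicate point is the domain of validity: one must verify that \eqref{hyprep} and the connection formula are simultaneously valid on all of $D_1$ (not merely on the real interval $(-1,1)$), and that the removal of the factor $\sin(\pi(\nu+\mu))$ does not introduce spurious singularities. The first issue is handled because both sides of \eqref{hyprep} are analytic on $D_1$, and by analytic continuation the identity propagates from $(-1,1)$ to all of $D_1$; the second issue is exactly why the hypothesis $\nu+\mu\notin-\Z$ has been imposed. This is the step I expect to require the most care, but no computation: it is purely a matter of tracking the exceptional set under which both the cotangent and cosecant remain finite and \eqref{connect1}-type identities remain valid.
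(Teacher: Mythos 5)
Your proposal is correct and follows exactly the route the paper intends: the paper offers no separate proof beyond the remark that the connection relation $\tfrac2\pi\sin(\pi(\nu+\mu))\Q_\nu^\mu(x)=\cos(\pi(\nu+\mu))\P_\nu^\mu(x)-\P_\nu^\mu(-x)$ together with \eqref{hyprep} yields the result, which is precisely your derivation (divide by the nonvanishing sine, then insert \eqref{hyprep} at $x$ and at $-x$). Your added care about $-x\in D_1$, the principal branches, and the exceptional set $\nu+\mu\in\Z$ is consistent with the paper's hypotheses, so nothing further is needed.
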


\begin{rem}
The hypergeometric representation in Theorem~\ref{tI7} involves two hypergeometric functions with the same parameters
$a=\nu+1$, $b=-\nu$, $c=1-\mu$ but
two different arguments {$(1\pm x)/2$} while the previous results in this section involve two
hypergeometric functions with two different set of parameters but the same argument.
By representing $\Q_\nu^\mu(x)$ in terms of~$\big\{\P_\nu^\mu(-x), \P_\nu^{-\mu}(x)\big\}$, $\big\{\P_\nu^{-\mu}(-x), \P_\nu^\mu(x)\big\}$,
$\big\{\P_\nu^{-\mu}(-x), \P_\nu^{-\mu}(x)\big\}$, respectively, we can obtain three addi\-tional hypergeometric representations of
$\Q_\nu^\mu(x)$.
\end{rem}

\begin{rem}
In Theorem~\ref{tI5}, $\Q_\nu^\mu(x)$ is expressed as a linear combination of $Q_\nu^\mu(x)$ and $Q_{-\nu-1}^\mu(x)$ in the upper and lower half-plane.
The argument of the hypergeometric function is {$w_5=2/(1+x)$}.
If one considers $x\in(-1,1)$, then this function maps to $(1,\infty)$, where the hypergeometric
function takes two values depending on whether the value is approached from above or
below the ray $[1,\infty)$. These values can be computed by Theorems
\ref{Theorem1}--\ref{Theorem4} which are given in
Appendix~\ref{hypercutsection}.
We map $w_5(x\pm {\rm i}0)$ to
$(1+x)/2$, $(x-1)/(x+1)$, $(1-x)/2$, $(x+1)/(x-1)$, which have already
been encountered in
Theorems~\ref{tI2},~\ref{tI3},~\ref{tI1} and~\ref{tI4}, respectively.
Similar remarks apply to Theorem~\ref{tI6}.
\end{rem}

\section[Group II hypergeometric representations for Q nu mu (x)]
{Group II hypergeometric representations for $\boldsymbol{\Q_\nu^\mu(x)}$}\label{sec4}

\begin{thm}\label{tII1}
Let $x\in D_1^+$, $\nu\in\C$, $\mu\in\C\setminus\Z$, $\nu+\mu\in\C\setminus-\N$. Then
\begin{gather*}
\Q_\nu^\mu(x)=\frac{2^{\mu-1}\Ga(\mu)\cos(\pi\mu)}{\big(1-x^2\big)^{\frac12\mu}}
\F{\frac{\nu-\mu+1}{2},\frac{-\nu-\mu}{2}}{1-\mu}{1-x^2}
\\ \hphantom{\Q_\nu^\mu(x)=}
{}+\frac{\Ga(-\mu)\Ga(\nu+\mu+1)\big(1-x^2\big)^{\frac12\mu}}{2^{1+\mu}\Ga(\nu-\mu+1)}
\F{\frac{\nu+\mu+1}{2},\frac{\mu-\nu}{2}}{1+\mu}{1-x^2}\!.
\end{gather*}
\end{thm}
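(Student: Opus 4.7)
The plan is to mimic the template used for Theorems~\ref{tI1}--\ref{tI6}: start from the Magnus--Oberhettinger--Soni representation of $\expe^{-{\rm i}\pi\mu}Q_\nu^\mu(z)$ with Olbricht argument $w_7=1-z^2$ (the corresponding $Q$-type entry in~\cite{MOS}, valid on $D_2$), and then pass to $\Q_\nu^\mu(x)$ by means of the Ferrers limit~\eqref{FerrersQ}. That MOS formula contains exactly the two hypergeometric functions appearing in the statement of Theorem~\ref{tII1}, multiplied respectively by the prefactors $2^{\mu-1}\Ga(\mu)\big(z^2-1\big)^{-\mu/2}$ and $\frac{\Ga(-\mu)\Ga(\nu+\mu+1)}{2^{\mu+1}\Ga(\nu-\mu+1)}\big(z^2-1\big)^{\mu/2}$. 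Since the hypergeometric series themselves are analytic in $1-z^2$ and take real values on $x\in(-1,1)$, the only delicate step is to track the branch of $\big(z^2-1\big)^{\pm\mu/2}$ as $z\to x\pm{\rm i}0$.

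Using the convention $\big(z^2-1\big)^\alpha=(z-1)^\alpha(z+1)^\alpha$, the factor $(z+1)^\alpha$ is single-valued across $(-1,1)$, while $(z-1)^\alpha$ picks up $\expe^{\pm{\rm i}\pi\alpha}(1-x)^\alpha$ as $z=x\pm{\rm i}0$ with $x\in(-1,1)$. Thus $\big(z^2-1\big)^{\mp\mu/2}=\expe^{\mp{\rm i}\pi\mu/2}\big(1-x^2\big)^{\mp\mu/2}$ at $z=x+{\rm i}0$, with the opposite phase at $z=x-{\rm i}0$. Substituting these two limits into the MOS identity and inserting the result into~\eqref{FerrersQ}, the two contributions in front of the first hypergeometric function combine as $\frac12\big(\expe^{-{\rm i}\pi\mu/2}\cdot\expe^{-{\rm i}\pi\mu/2}+\expe^{{\rm i}\pi\mu/2}\cdot\expe^{{\rm i}\pi\mu/2}\big)=\cos(\pi\mu)$, while those in front of the second hypergeometric function combine as $\frac12\big(\expe^{-{\rm i}\pi\mu/2}\cdot\expe^{{\rm i}\pi\mu/2}+\expe^{{\rm i}\pi\mu/2}\cdot\expe^{-{\rm i}\pi\mu/2}\big)=1$. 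This reproduces the stated identity on the real segment $x\in(0,1)$.

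The reason for the restriction to $D_1^+$, rather than the full $D_1$, is that the argument $1-x^2$ must avoid the hypergeometric branch cut $[1,\infty)$ (which happens precisely when $x$ is purely imaginary) and the prefactors $\big(1-x^2\big)^{\pm\mu/2}$ must be single-valued (which requires $x^2\notin[1,\infty)$, i.e.\ $x\notin(-\infty,-1]\cup[1,\infty)$); both of these are enforced by combining $\mathop{\rm Re}x>0$ with $x\in D_1$. The parameter conditions $\mu\notin\Z$ and $\nu+\mu\notin-\N$ are needed only so that $\Ga(\pm\mu)$ and the ratio $\Ga(\nu+\mu+1)/\Ga(\nu-\mu+1)$ remain finite. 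Once the identity is verified on $(0,1)$, both sides are holomorphic on $D_1^+$ and analytic continuation extends it to all of $D_1^+$, exactly as in the closing step of the proof of Theorem~\ref{tI1}. The principal obstacle is not the analysis but the sourcing: one needs to locate the appropriate entry of~\cite{MOS} and confirm its prefactors; once the MOS formula is in hand, the Ferrers limit and the phase bookkeeping are entirely mechanical.
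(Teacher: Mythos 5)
Your proposal follows essentially the same route as the paper's proof: take the Magnus--Oberhettinger--Soni representation of $\expe^{-{\rm i}\pi\mu}Q_\nu^\mu(z)$ with argument $1-z^2$ (Entry 25, p.~161), insert it into the limit formula \eqref{FerrersQ} to obtain the identity on $(0,1)$, and extend to $D_1^+$ by analyticity; your explicit phase bookkeeping for $\big(z^2-1\big)^{\pm\frac12\mu}$ across the cut, producing the factors $\cos(\pi\mu)$ and $1$, is exactly what the paper leaves implicit in the step ``\eqref{FerrersQ} gives the desired representation,'' and it is carried out correctly.
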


\begin{proof}
Use the Gauss hypergeometric representation of the associated Legendre function of the second kind~\cite[Entry 25, p.~161]{MOS},
\begin{gather*}
\expe^{-{\rm i}\pi\mu} Q_\nu^\mu (z) = 2^{\mu - 1} \Ga(\mu)(z^2 - 1)^{-{\frac12\mu}}
\F{\frac{\nu-\mu+1}{2}, \frac{-\nu-\mu}{2}}{1 - \mu}{1-z^2}
\\ \hphantom{\expe^{-{\rm i}\pi\mu} Q_\nu^\mu (z) =}
{} +\frac{\Ga(\nu+\mu+1) \Ga(-\mu)}{2^{1+\mu}\Ga(\nu-\mu+1)}(z^2 - 1)^{{\frac12\mu}}
\F{\frac{\nu+\mu+1}{2}, \frac{\mu-\nu}{2}}{1+\mu}{1 - z^2}\!,
\end{gather*}
valid for $z\in D_2$ with $\mathop{\rm Re} z>0$. Now \eqref{FerrersQ} gives the desired representation for $x\in(0,1)$.
Since both sides of the equation are analytic for $x\in D_1$, $\mathop{\rm Re} x>0${. The} full statement follows.
\end{proof}

\begin{thm}\label{tII2}
Let $\nu,\mu\in\C$ such that $\nu+\mu\notin-\N$ and $\nu+\frac12\notin\Z$. If $\pm \mathop{\rm Im} x>0$ then
\begin{gather*}
\Q_\nu^\mu(x)=\sqrt\pi\, 2^{-\nu-1} \expe^{\pm\frac12{\rm i}\pi(-\nu+\mu-1)}\left(\cos(\pi\mu)\mp {\rm i}\frac{\sin(\pi(\mu-\nu))}{2\cos(\pi\nu)}\right)\frac{\Ga(\nu+\mu+1)}{\Ga\big(\nu+\frac32\big)}
\\ \hphantom{\Q_\nu^\mu(x)=+}
{}\times \big(1-x^2\big)^{-\frac12\nu-\frac12} \F{\frac{\nu-\mu+1}2,\frac{\nu+\mu+1}2}{\nu+\frac32}{\frac{1}{1-x^2}}
\\ \hphantom{\Q_\nu^\mu(x)=}
{}+ \pi^{\frac32} 2^{\nu-1} \frac{\expe^{\pm\frac12{\rm i}\pi(\nu+\mu+1)}\sec(\pi\nu)}{\Ga(\nu-\mu+1)\Ga\big(\frac12-\nu\big)}
\big(1-x^2\big)^{\frac12\nu}\F{-\frac{\nu+\mu}2,\frac{\mu-\nu}2}{\frac12-\nu}{\frac{1}{1-x^2}}\!.
\end{gather*}
\end{thm}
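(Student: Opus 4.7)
The plan is to mirror the proofs of Theorems~\ref{tI5} and~\ref{tI6}: use the connection relations~\eqref{FerrersQc} (for $\mathop{\rm Im} x>0$) and~\eqref{FerrersQf} (for $\mathop{\rm Im} x<0$) to write $\Q_\nu^\mu(x)$ as a linear combination of $Q_\nu^\mu(x)$ and $Q_{-\nu-1}^\mu(x)$, then substitute a Group~II hypergeometric representation of $\expe^{-{\rm i}\pi\mu}Q_\nu^\mu(z)$ with argument $1/(1-z^2)$. The relevant entry is Entry~26 of~\cite[p.~161]{MOS}, which has the shape
\begin{gather*}
\expe^{-{\rm i}\pi\mu} Q_\nu^\mu(z)=\sqrt{\pi}\,2^{-\nu-1}\frac{\Ga(\nu+\mu+1)}{\Ga(\nu+\tfrac32)}\big(z^2-1\big)^{-\frac{\nu+1}{2}}\F{\frac{\nu-\mu+1}{2},\frac{\nu+\mu+1}{2}}{\nu+\frac{3}{2}}{\frac{1}{1-z^2}}\!,
\end{gather*}
valid on $D_2$, together with its companion obtained by replacing $\nu$ with $-\nu-1$; the latter carries the parameter $\tfrac12-\nu$ in the third slot of the hypergeometric series and produces the second term of the theorem via the reflection and duplication identities applied to $\Ga(-\nu)/\Ga(-2\nu)$.

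First, I would record the two substitutions cleanly: apply the $Q_\nu^\mu$ formula directly, and apply it again with $\nu\mapsto -\nu-1$, noting that $(\nu-\mu+1)/2\mapsto -(\nu+\mu)/2$ and $(\nu+\mu+1)/2\mapsto (\mu-\nu)/2$, while the prefactor $\Ga(\nu+\mu+1)/\Ga(\nu+\tfrac32)$ becomes $\Ga(\mu-\nu)/\Ga(\tfrac12-\nu)$. I would then rewrite $\Ga(\mu-\nu)$ using~\eqref{Eulerreflect} to recover $\Ga(\nu-\mu+1)$ in the denominator and produce the $\sec(\pi\nu)$ factor expected in the theorem, together with an overall $\pi^{3/2}2^{\nu-1}$ constant.

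Second, I would track the branch of $(z^2-1)^{\alpha}=(z+1)^\alpha(z-1)^\alpha$ as $z=x\pm {\rm i}0$ with $x\in(-1,1)$. Since $x-1<0$ while $x+1>0$, the principal branch gives $(z^2-1)^{\alpha}=\expe^{\pm{\rm i}\pi\alpha}(1-x^2)^{\alpha}$ for $\pm\mathop{\rm Im} z>0$. This is precisely what generates the phase factors $\expe^{\pm\tfrac12{\rm i}\pi(-\nu+\mu-1)}$ and $\expe^{\pm\tfrac12{\rm i}\pi(\nu+\mu+1)}$ that multiply the two surviving hypergeometric functions. I would also verify that the hypergeometric arguments $1/(1-z^2)$ from above and below the cut merge into the single real value $1/(1-x^2)$, which is legitimate because the restrictions $2\nu+3,\,1-2\nu\notin-\N_0$ (encoded in $\nu+\tfrac12\notin\Z$) ensure no pole collisions in the $\Ga$-factors.

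The main obstacle will be the bookkeeping of the phase factors: after inserting both $Q_\nu^\mu$ and $Q_{-\nu-1}^\mu$ representations into~\eqref{FerrersQc} and~\eqref{FerrersQf}, one obtains four complex exponentials multiplying two hypergeometric series, and these must collapse to the pair of clean coefficients $(\cos(\pi\mu)\mp{\rm i}\sin(\pi(\mu-\nu))/(2\cos(\pi\nu)))\cdot\expe^{\pm\tfrac12{\rm i}\pi(-\nu+\mu-1)}$ and $\expe^{\pm\tfrac12{\rm i}\pi(\nu+\mu+1)}\sec(\pi\nu)$. Once those identifications are made, analyticity on the upper and lower half planes of $D_1$ (separately) finishes the proof; unlike Theorem~\ref{tII1}, we cannot unify the two half-planes into a single identity because of the branch behaviour of $(1-x^2)^{-(\nu+1)/2}$ and $(1-x^2)^{\nu/2}$ at generic non-integer $\nu$.
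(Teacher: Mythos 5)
Your proposal is correct and follows essentially the same route as the paper: represent $\Q_\nu^\mu(x)$ via \eqref{FerrersQc}/\eqref{FerrersQf} and insert MOS Entry~26 together with its $\nu\mapsto-\nu-1$ companion (and your version of that entry, with exponent $-(\nu+1)/2$, is the one that actually produces $(1-x^2)^{-\frac12\nu-\frac12}$; the exponent displayed in the paper's proof has a sign slip). The only inaccuracies are cosmetic: no duplication formula or ratio $\Ga(-\nu)/\Ga(-2\nu)$ enters here (that belongs to Theorems~\ref{tI5} and~\ref{tI6}), and the $\sec(\pi\nu)$ comes from the factor $1/(2\cos(\pi\nu))$ already present in the connection formulas, while reflection applied to $\Ga(\mu-\nu)$ merely cancels $\sin(\pi(\mu-\nu))$ and yields $\Ga(\nu-\mu+1)$ in the denominator.
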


\begin{proof}
We represent $\Q_\nu^\mu(x)$ by~\eqref{FerrersQc}
if $\mathop{\rm Im} x>0$ and by~\eqref{FerrersQf}
if $\mathop{\rm Im} x<0$. Then we use~\cite[Entry 26, p.~161]{MOS}
\begin{gather*}
\expe^{-{\rm i}\pi\mu}Q_\nu^\mu (z)=
\sqrt{\pi}\,2^{-\nu-1}\frac{\Ga(\nu+\mu+1)}{\Ga\big(\nu+\frac32\big)}(z^2-1)^{\tfrac12(\nu+1)}
\F{\frac{\nu-\mu+1}{2}, \frac{\nu+\mu+1}{2}}{\nu+\frac32}{\frac{1}{1-z^2}}\!,
\end{gather*}
for $z\in D_2$ and the desired result follows.
\end{proof}

\begin{thm}[equation~(14.3.12) in~\cite{NIST:DLMF}]\label{tII3}
Let $x \in D_1$ and $\nu, \mu \in \C$, such that $\nu + \mu \notin -\N$. Then
\begin{gather*}
\Q_\nu^\mu (x) = \frac{\sqrt{\pi}\, 2^{\mu - 1}}{(1 - x^2)^{\frac12\mu}} \Bigg[
\frac{-\sin\big(\frac{\pi}2(\nu+\mu)\big)
\Ga\big(\frac{\nu+\mu+1}{2}\big)}{\Ga\big(\frac{\nu-\mu+2}{2}\big)}
\F{-\frac{\nu+\mu}2,\frac{\nu-\mu+1}2}{\frac12}{x^2}
\\ \hphantom{\Q_\nu^\mu (x) = \frac{\sqrt{\pi}\, 2^{\mu - 1}}{(1 - x^2)^{\frac12\mu}} \Bigg[}
{} +\frac{2\cos\big(\tfrac{\pi}{2}(\nu+\mu)\big)
\Ga\big(\frac{\nu+\mu+2}{2}\big)\,x}{\Ga\big(\frac{\nu-\mu+1}{2}\big)}
\F{\frac{-\nu-\mu+1}{2}, \frac{\nu-\mu+2}{2}}{\frac32}{x^2}\! \Bigg].
\end{gather*}
\end{thm}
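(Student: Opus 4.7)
The plan is to derive this by combining the connection formula~\eqref{connect1} with the hypergeometric representation of the Ferrers function of the first kind having argument~$x^2$, namely~\cite[equation~(14.3.11)]{NIST:DLMF}. That formula writes $\P_\nu^\mu(x)$ as a constant multiple of $(1-x^2)^{-\mu/2}$ times a linear combination of the two basic ${}_2F_1$-solutions of the Legendre equation at $x=0$: the even solution $G_1:=\F{-\frac{\nu+\mu}{2},\frac{\nu-\mu+1}{2}}{\frac12}{x^2}$ and the odd solution $xG_2$ with $G_2:=\F{\frac{1-\nu-\mu}{2},\frac{\nu-\mu+2}{2}}{\frac32}{x^2}$, the coefficients involving $\cos(\tfrac\pi2(\nu+\mu))$, $\sin(\tfrac\pi2(\nu+\mu))$, and half-integer gamma ratios.

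I would first substitute this representation and the analogous representation of $\P_\nu^{-\mu}(x)$ (obtained by $\mu\mapsto-\mu$) into
\begin{gather*}
\frac{2\sin(\pi\mu)}{\pi}\Q_\nu^\mu(x)=\cos(\pi\mu)\P_\nu^\mu(x)-\frac{\Ga(\nu+\mu+1)}{\Ga(\nu-\mu+1)}\P_\nu^{-\mu}(x).
\end{gather*}
The $\P_\nu^{-\mu}$ piece naturally carries an overall factor $(1-x^2)^{+\mu/2}$ in front of basic ${}_2F_1$ solutions whose parameters differ from those of $G_1$ and $G_2$. Applying Euler's transformation~\eqref{Euler} to $G_1$ and $G_2$ shows that in both cases $c-a-b=\mu$, so the alternative basic solutions appearing in $\P_\nu^{-\mu}(x)$ are precisely $(1-x^2)^{-\mu}G_1$ and $(1-x^2)^{-\mu}G_2$. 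This brings both contributions under the common prefactor $(1-x^2)^{-\mu/2}$, with the ${}_2F_1$ pair being only $G_1$ and $G_2$.

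Next I would invoke the gamma duplication formula to rewrite
\begin{gather*}
\frac{\Ga(\nu+\mu+1)}{\Ga(\nu-\mu+1)}=2^{2\mu}\,\frac{\Ga(\tfrac{\nu+\mu+1}{2})\,\Ga(\tfrac{\nu+\mu+2}{2})}{\Ga(\tfrac{\nu-\mu+1}{2})\,\Ga(\tfrac{\nu-\mu+2}{2})},
\end{gather*}
so that the coefficient of $G_1$ becomes $2^\mu\,\Ga(\tfrac{\nu+\mu+1}{2})/\Ga(\tfrac{\nu-\mu+2}{2})$ times $\bigl[\cos(\pi\mu)\cos(\tfrac\pi2(\nu+\mu))-\cos(\tfrac\pi2(\nu-\mu))\bigr]$, which collapses by a product-to-sum identity to $-\sin(\pi\mu)\sin(\tfrac\pi2(\nu+\mu))$. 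The coefficient of $xG_2$ reduces analogously via $\cos(\pi\mu)\sin(\tfrac\pi2(\nu+\mu))-\sin(\tfrac\pi2(\nu-\mu))=\sin(\pi\mu)\cos(\tfrac\pi2(\nu+\mu))$. The resulting $\sin(\pi\mu)$ cancels the one in the denominator coming from~\eqref{connect1}, and the numerical prefactor collects into exactly $\sqrt\pi\,2^{\mu-1}/(1-x^2)^{\mu/2}$, yielding the claimed identity for $\mu\in\C\setminus\Z$ and $x\in(-1,1)$.

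The main obstacle is this combined bookkeeping: the Euler transformation is what puts both $\P_\nu^\mu$ and $\P_\nu^{-\mu}$ on the same pair of ${}_2F_1$'s with a single $(1-x^2)^{-\mu/2}$ prefactor, and the duplication formula is what aligns the half-integer gamma denominators; once these alignments are in place, the trigonometric collapse is routine. To conclude, the right-hand side of the theorem is holomorphic in $(\mu,x)$ on $\{\mu\in\C\colon\nu+\mu\notin-\N\}\times D_1$ (no $\sin(\pi\mu)$ survives in the denominator), and $\Q_\nu^\mu(x)$ is holomorphic there as well, so analytic continuation in $\mu$ removes the restriction $\mu\notin\Z$ and analytic continuation in $x$ extends the identity from $(-1,1)$ to all of $D_1$.
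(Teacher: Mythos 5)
Your derivation is correct, but it follows a genuinely different route from the paper's. The paper proves this theorem by inserting the representation of $\expe^{-{\rm i}\pi\mu}Q_\nu^\mu(z)$ with argument $z^2$ from \cite[Entry 27, p.~161]{MOS}, whose coefficients carry the sign-dependent phases $\expe^{\pm\frac12{\rm i}\pi(\mu-\nu)}$ and $\expe^{\pm\frac12{\rm i}\pi(\mu-\nu-1)}$ according to $\pm\mathop{\rm Im}z>0$, into the defining average \eqref{FerrersQ} of the boundary values $Q_\nu^\mu(x\pm{\rm i}0)$; the two phases then combine directly into $\cos\big(\frac\pi2(\nu+\mu)\big)$ and $\sin\big(\frac\pi2(\nu+\mu)\big)$, so the identity is obtained at once for all $\mu\in\C$ with $\nu+\mu\notin-\N$ on $(-1,1)$, and only continuation in $x$ to $D_1$ remains. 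You instead combine the connection formula \eqref{connect1} with the $x^2$-argument representation of $\P_\nu^{\pm\mu}$ (DLMF (14.3.11) with (14.3.13)--(14.3.14)), align the two hypergeometric pairs by Euler's transformation (indeed $c-a-b=\mu$ for both $G_1$ and $G_2$, and $\mathop{\rm Re}$-independent of branch issues since $1-x^2\notin(-\infty,0]$ and $x^2\notin[1,\infty)$ on $D_1$), use the duplication formula on $\Ga(\nu+\mu+1)/\Ga(\nu-\mu+1)$, and collapse the trigonometric coefficients; I checked that $\cos(\pi\mu)\cos\big(\frac\pi2(\nu+\mu)\big)-\cos\big(\frac\pi2(\nu-\mu)\big)=-\sin(\pi\mu)\sin\big(\frac\pi2(\nu+\mu)\big)$ and $\cos(\pi\mu)\sin\big(\frac\pi2(\nu+\mu)\big)-\sin\big(\frac\pi2(\nu-\mu)\big)=\sin(\pi\mu)\cos\big(\frac\pi2(\nu+\mu)\big)$, and that the constants do assemble into exactly $\sqrt\pi\,2^{\mu-1}\big(1-x^2\big)^{-\frac12\mu}$ times the stated bracket. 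What the paper's method buys is that no division by $\sin(\pi\mu)$ ever occurs, so integer $\mu$ needs no special treatment; what your method buys is a derivation carried out entirely with Ferrers functions, avoiding boundary values of $Q_\nu^\mu$ on the cut, at the cost of the extra step you correctly flag: removing the restriction $\mu\notin\Z$ by analyticity of both sides in $\mu$ on $\{\mu\colon\nu+\mu\notin-\N\}$ (the poles of $\Ga\big(\frac{\nu+\mu+1}{2}\big)$ and $\Ga\big(\frac{\nu+\mu+2}{2}\big)$ lie exactly on $\nu+\mu\in-\N$), in addition to the continuation in $x$ that both arguments share.
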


\begin{proof}
By~\cite[Entry 27, p.~161]{MOS},
the associated Legendre function of the second kind satisfies
\begin{gather*}
\expe^{-{\rm i}\pi\mu} Q_\nu^\mu (z) = \frac{\pi^\frac12 2^\mu \Ga\big(\frac{\nu+\mu+2}{2}\big) z}
{\Ga\big(\frac{\nu-\mu+1}{2}\big) (z^2 - 1)^{{\frac12\mu}}} \expe^{\pm \frac12 {\rm i} \pi (\mu - \nu)}
\F{\frac{-\nu-\mu+1}{2}, \frac{\nu-\mu+2}{2}}{\frac32}{z^2}
\\ \hphantom{\expe^{-{\rm i}\pi\mu} Q_\nu^\mu (z) =}
{}+ \frac{\pi^\frac12 2^{\mu - 1} \Ga\big(\frac{\nu+\mu+1}{2}\big)}{\Ga\big(\frac{\nu-\mu+1}{2}\big)
(z^2 - 1)^{{\frac12\mu}}} \expe^{\pm \frac12 {\rm i} \pi (\mu - \nu - 1)}
\F{\frac{-\nu-\mu}{2}, \frac{\nu-\mu+1}{2}}{\frac12}{z^2}\!,
\end{gather*}
where the upper and lower sign holds according to $\pm\mathop{\rm Im} z>0$.
Using this equation with \eqref{FerrersQ}, one derives
\begin{gather*}
\Q_\nu^\mu (x) = \frac{\pi^\frac12 2^{\mu - 2}}{(1 - x^2)^{{\frac12\mu}}} \Bigg[2x
 \big(\expe^{\frac12 {\rm i}\pi(\nu + \mu)} +\expe^{-\frac12 {\rm i}\pi(\nu + \mu)} \big)
 \frac{\Ga\big(\frac{\nu+\mu+2}{2}\big)} {\Ga\big(\frac{\nu-\mu+1}{2}\big)}
 \F{\frac{-\nu-\mu+1}{2}, \frac{\nu-\mu+2}{2}}{\frac32}{x^2}
 \\ \hphantom{\Q_\nu^\mu (x) = \frac{\pi^\frac12 2^{\mu - 2}}{(1 - x^2)^{{\frac12\mu}}}\Bigg[}
{} +
\big(\expe^{-\frac12 {\rm i}\pi(\nu + \mu + 1)}+\expe^{\frac12 {\rm i}\pi(\nu + \mu + 1)}
\big)\frac{\Ga\big(\frac{\nu+\mu+1}{2}\big)}{\Ga\big(\frac{\nu-\mu+2}{2}\big)}
\F{\frac{-\nu-\mu}{2}, \frac{\nu-\mu+1}{2}}{\frac12}{x^2}\!\Bigg].
\end{gather*}
Since this is equivalent to the claimed result, we have completed the proof.
\end{proof}

\begin{thm}\label{tII4}
Let $\nu,\mu\in\C$ such that $\nu+\mu\notin-\N$ and $\nu+\frac12\notin\Z$. If $\pm \mathop{\rm Im} x>0$ then
\begin{gather*}
\Q_\nu^\mu(x)=\sqrt\pi\, 2^{-\nu-1} \expe^{\pm {\rm i}\pi\mu}\left(\cos(\pi\mu)
\mp {\rm i}\frac{\sin(\pi(\mu-\nu))}{2\cos(\pi\nu)}
\right)\frac{\Ga(\nu+\mu+1)}{\Ga\big(\nu+\frac32\big)}
\\ \hphantom{\Q_\nu^\mu(x)=\sqrt\pi}
{}\times x^{{-\nu-\mu-1}}\big(1-x^2\big)^{\frac12\mu} \F{\frac{\nu+\mu+1}2,\frac{\nu+\mu+2}2}{\nu+\frac32}{\frac{1}{x^2}}
\\ \hphantom{\Q_\nu^\mu(x)=}
{}+ \pi^{\frac32} 2^{\nu-1} \frac{\expe^{\pm\pi {\rm i}\left(\frac12+\mu\right)}\sec(\pi\nu)}{\Ga(\nu-\mu+1)\Ga\big(\frac12-\nu\big)}
 x^{\nu-\mu}\big(1-x^2\big)^{\frac12\mu}
\F{\frac{\mu-\nu}2,\frac{\mu-\nu+1}{2}}{\frac12-\nu}{\frac{1}{x^2}}\!.
\end{gather*}
\end{thm}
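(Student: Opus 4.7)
The plan is to mimic the proof of Theorem~\ref{tII2} almost verbatim, replacing the MOS representation at hypergeometric argument $1/(1-z^2)$ by the one at argument $1/z^2$ (which is essentially the definition~\eqref{defQ} itself). The hypothesis $\nu+\tfrac12\not\in\Z$ guarantees that $Q_\nu^\mu$ and $Q_{-\nu-1}^\mu$ are linearly independent, which is precisely what is required so that the connection relations~\eqref{FerrersQc} and~\eqref{FerrersQf} genuinely expand $\Q_\nu^\mu(x)$ as a nondegenerate combination of these two functions in the upper and lower half planes respectively.

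Concretely, first apply~\eqref{FerrersQc} (for $\mathop{\rm Im} x>0$) or~\eqref{FerrersQf} (for $\mathop{\rm Im} x<0$) to rewrite $\Q_\nu^\mu(x)$ as a linear combination of $Q_\nu^\mu(x)$ and $Q_{-\nu-1}^\mu(x)$. Next, invoke~\eqref{defQ} directly for $Q_\nu^\mu(z)$, and use the same formula with $\nu$ replaced by $-\nu-1$ for $Q_{-\nu-1}^\mu(z)$; the coefficient of the latter contains a $\Ga(\mu-\nu)$. To bring this second coefficient into the form stated in the theorem, apply the reflection formula~\eqref{Eulerreflect} to write $\Ga(\mu-\nu)=\pi/[\Ga(\nu-\mu+1)\sin(\pi(\mu-\nu))]$. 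The factor $\sin(\pi(\mu-\nu))$ then cancels against the corresponding factor appearing in~\eqref{FerrersQc}/\eqref{FerrersQf}, yielding the combination $\pi^{3/2}2^{\nu-1}\sec(\pi\nu)/[\Ga(\nu-\mu+1)\Ga(\tfrac12-\nu)]$ that appears in the theorem.

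The main obstacle is the careful bookkeeping of branch factors. The powers $(z^2-1)^{\mu/2}$, $z^{-\nu-\mu-1}$ and $z^{\nu-\mu}$ issuing from~\eqref{defQ} must be rewritten in terms of the principal value $(1-x^2)^{\mu/2}$ on $D_1$ together with principal values of $x$ in the cut plane. Writing $z^2-1=\expe^{\pm\pi{\rm i}}(1-z^2)$ with the sign chosen according to whether one sits above or below the real axis produces phases $\expe^{\pm\pi{\rm i}\mu/2}$ from $(z^2-1)^{\mu/2}$; combining these with the $\expe^{-\pi{\rm i}\mu/2}$ already present in~\eqref{FerrersQc}/\eqref{FerrersQf} and the phases coming from the negative powers of $z$ yields exactly the prefactors $\expe^{\pm{\rm i}\pi\mu}$ and $\expe^{\pm\pi{\rm i}(\frac12+\mu)}$ listed in the statement. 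Once the identity is verified on $(0,1)$ approached from above and below, analyticity in each open half plane extends it to all $x$ with $\pm\mathop{\rm Im} x>0$, completing the proof.
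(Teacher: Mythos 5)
Your proposal is correct and follows essentially the paper's own proof: represent $\Q_\nu^\mu(x)$ via \eqref{FerrersQc} (upper half-plane) or \eqref{FerrersQf} (lower half-plane), insert \eqref{defQ} (which is MOS Entry 28) for $Q_\nu^\mu$ and, with $\nu\mapsto-\nu-1$, for $Q_{-\nu-1}^\mu$; the reflection-formula cancellation and the branch/phase bookkeeping you spell out are exactly the computation the paper compresses into ``the desired result follows.'' Two minor points: the prefactor in \eqref{FerrersQf} is $\expe^{-\frac32\pi{\rm i}\mu}$ rather than $\expe^{-\frac12\pi{\rm i}\mu}$ (your final phases are nonetheless the right ones), and no boundary-value-plus-continuation step is needed, since \eqref{defQ} is valid on all of $D_2$ and hence the identity holds pointwise throughout each open half-plane.
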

\begin{proof}
We represent $\Q_\nu^\mu(x)$ by~\eqref{FerrersQc}
if $\mathop{\rm Im} x>0$ and by~\eqref{FerrersQf} if~$\mathop{\rm Im} x<0$.
Then we use~\cite[Entry 28, p.~162]{MOS} which agrees with \eqref{defQ},
and the desired result follows.
\end{proof}

\begin{thm}\label{tII5}
Let $x\in D_1^+$, $\nu \in \C$, $\mu \in \C \setminus \Z$ such
that $\nu + \mu \notin -\N$. Then
\begin{gather*}
\Q_\nu^\mu (x) =
\frac{2^{\mu-1}\cos(\pi\mu)\Ga(\mu)}{\big(1-x^2\big)^{{\frac12\mu}}} x^{\nu+\mu}
\F{\frac{-\nu-\mu}{2}, \frac{-\nu-\mu+1}{2}}{1-\mu}{\frac{x^2-1}{x^2}}
\\ \hphantom{\Q_\nu^\mu (x) =}
{} +\frac{\Ga(\nu+\mu+1) \Ga(-\mu)}{2^{\mu+1}\Ga(\nu-\mu+1)}\big(1-x^2\big)^{{\frac12\mu}} x^{\nu-\mu}
\F{\frac{\mu-\nu}{2}, \frac{\mu-\nu+1}{2}}{1+\mu}{\frac{x^2-1}{x^2}}\!.
\end{gather*}
\end{thm}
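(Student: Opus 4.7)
The plan is to derive Theorem \ref{tII5} directly from Theorem \ref{tII1} by applying Pfaff's transformation \eqref{Pfaff2} to each of the two hypergeometric factors. The key observation is that the Pfaff substitution $w\mapsto w/(w-1)$ sends $1-x^2$ to $(x^2-1)/x^2$, so it is the natural map between the two theorems' hypergeometric arguments, and the $\Gamma$-function coefficients together with the $(1-x^2)^{\pm\mu/2}$ prefactors already coincide between the two statements.

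Concretely, applied to the first $\F{}{}{}$ in Theorem \ref{tII1} with $a=\tfrac{\nu-\mu+1}{2}$, $b=\tfrac{-\nu-\mu}{2}$, $c=1-\mu$, $w=1-x^2$, the transformation \eqref{Pfaff2} produces the factor $(1-w)^{-b}=(x^2)^{(\nu+\mu)/2}$ together with a new hypergeometric having parameters $b,\,c-a=\tfrac{-\nu-\mu}{2},\tfrac{-\nu-\mu+1}{2}$ and argument $(x^2-1)/x^2$, which reproduces the first term of Theorem \ref{tII5} verbatim. A parallel computation applied to the second $\F{}{}{}$ in Theorem \ref{tII1}, with $a=\tfrac{\nu+\mu+1}{2}$, $b=\tfrac{\mu-\nu}{2}$, $c=1+\mu$, yields parameters $\tfrac{\mu-\nu}{2},\tfrac{\mu-\nu+1}{2}$ and the factor $x^{\nu-\mu}$, matching the second term of Theorem \ref{tII5}.

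The main subtlety lies in branch tracking. For $x\in D_1^+$ we have $\mathop{\rm Re} x>0$, hence $x^2\notin(-\infty,0]$, so the principal branch satisfies $(x^2)^{(\nu\pm\mu)/2}=x^{\nu\pm\mu}$ unambiguously. I must also verify that $(x^2-1)/x^2=1-x^{-2}$ avoids the branch cut $[1,\infty)$ of the hypergeometric function, which would fail only when $x^{-2}\in(-\infty,0]$, equivalently $x\in {\rm i}\R$; this is excluded by $\mathop{\rm Re} x>0$. Likewise, $1-x^2$ lies outside $[1,\infty)$ precisely because $x^2\notin(-\infty,0]$, so \eqref{Pfaff2} is applicable at the input as well.

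The hard part is really just this branch verification; once established, the identity of Theorem \ref{tII5} follows on all of $D_1^+$ without further computation. As an alternative parallel to the paper's earlier proofs, one could start from an MOS-type entry for $\expe^{-{\rm i}\pi\mu}Q_\nu^\mu(z)$ with argument $(z^2-1)/z^2$ on $D_2\cap\{\mathop{\rm Re} z>0\}$, apply the Ferrers formula \eqref{FerrersQ} to obtain the identity on $(0,1)$, and extend by analytic continuation to $D_1^+$ exactly as in the proof of Theorem \ref{tII1}; the two routes give the same conclusion.
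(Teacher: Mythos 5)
Your proof is correct, but it takes a different route from the paper's own proof of Theorem~\ref{tII5}. The paper works directly from the Magnus--Oberhettinger--Soni entry for $\expe^{-{\rm i}\pi\mu}Q_\nu^\mu(z)$ with argument $1-1/z^2$ (valid for $\mathop{\rm Re} z>0$, $z\notin(0,1]$), feeds it into the limit formula \eqref{FerrersQ} to get the identity on $(0,1)$, and then extends to $D_1^+$ by analyticity --- essentially the ``alternative'' you sketch in your last paragraph. Your primary argument instead deduces Theorem~\ref{tII5} from the already-established Theorem~\ref{tII1} by applying Pfaff's transformation \eqref{Pfaff2} to each ${}_2F_1$: your parameter bookkeeping checks out (with $a=\tfrac{\nu-\mu+1}{2}$, $b=\tfrac{-\nu-\mu}{2}$, $c=1-\mu$ one gets $c-a=\tfrac{-\nu-\mu+1}{2}$ and the factor $(x^2)^{(\nu+\mu)/2}$, and similarly for the second term), and your branch verification is exactly the point that needs care: for $x\in D_1^+$ one has $x^2\notin(-\infty,0]$, so $1-x^2\notin[1,\infty)$, $(x^2-1)/x^2\notin[1,\infty)$, and $(x^2)^{(\nu\pm\mu)/2}=x^{\nu\pm\mu}$ with principal branches. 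This is not circular, since Theorem~\ref{tII1} is proved independently earlier, and in fact the paper itself notes (in the remark following Theorem~\ref{tII6}) that Theorems~\ref{tII5}, \ref{tII4} and \ref{tII6} follow from Theorems~\ref{tII1}, \ref{tII2} and \ref{tII3} by Pfaff --- though it does not adopt this as the proof. What each approach buys: your Pfaff route is shorter and requires no additional tabulated representation beyond Theorem~\ref{tII1}, while the paper's route is uniform across all eighteen cases and independent of the other Group~II theorems.
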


\begin{proof}
The hypergeometric representation~\cite[Entry 29, p.~162]{MOS},{\samepage
\begin{gather*}
\expe^{-{\rm i}\pi\mu} Q_\nu^\mu (z) = \frac{2^{\mu-1}\Ga(\mu)z^{\nu+\mu}}
{(z^2-1)^{\frac12\mu}}
\F{\frac{-\nu-\mu}{2},\frac{1-\nu-\mu}{2}}{1-\mu}{1-\frac{1}{z^2}}
\\ \hphantom{\expe^{-{\rm i}\pi\mu} Q_\nu^\mu (z) =}
{} +\frac{\Ga(\nu+\mu+1)\Ga(-\mu)z^{\nu-\mu}(z^2-1)^{\frac12\mu}}
{2^{\mu+1}\Ga(\nu-\mu+1)}
\F{\frac{\mu-\nu}{2},\frac{\mu-\nu+1}{2}}{1+\mu}{1-\frac{1}{z^2}}\!,
\end{gather*}
is valid for $\mathop{\rm Re} z\!>\!0$, $z\not\in(0,1]$ $\big($then $1-\frac1{z^2}\notin [1,\infty)\big)$.
Using~\eqref{FerrersQ}, the result follows.}
\end{proof}

\begin{thm}\label{tII6}
Let $x\in D_1$, $\mu,\nu\in\C$ such that $\nu+\mu\notin \N$. Then
\begin{gather*}
{\sf Q}_\nu^\mu(x)=\sqrt{\pi}\,2^\mu\Bigg[\frac{-\Ga\big(\frac{\nu+\mu+1}{2}\big) \sin\big(\frac{\pi}{2}(\nu+\mu)\big)}{2\Ga\big(\frac{\nu-\mu+2}{2}\big)\big(1-x^2\big)^{\tfrac12(\nu+1)}}
\F{\frac{\nu-\mu+1}{2},\frac{\nu+\mu+1}{2}}{\frac12}{\frac{x^2}{x^2-1}}
\\ \hphantom{{\sf Q}_\nu^\mu(x)=}
{}+\frac{\Ga\big(\frac{\nu+\mu+2}{2}\big)
\cos\big(\frac{\pi}{2}(\nu+\mu)\big)x\big(1-x^2\big)^{{\frac12(\nu-1)}}}
{\Ga\big(\frac{\nu-\mu+1}{2}\big)}
\F{\frac{\mu-\nu+1}{2},\frac{-\nu-\mu+1}{2}}{\frac32}{\frac{x^2}{x^2-1}}\!\Bigg].
\end{gather*}
\end{thm}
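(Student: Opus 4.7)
The plan is to derive this representation from Theorem~\ref{tII3} by applying Pfaff's transformation \eqref{Pfaff2} to each of the two hypergeometric functions in $x^2$ appearing there, followed by Euler's transformation \eqref{Euler} on the second one. The key observation is that $w/(w-1) = x^2/(x^2-1)$ when $w = x^2$, so a single Pfaff step converts the argument from $x^2$ to $x^2/(x^2-1)$, at the cost of a power of $(1-x^2)$ that can be absorbed into the exterior prefactor $(1-x^2)^{-\mu/2}$ carried over from Theorem~\ref{tII3}.

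For the first hypergeometric function in Theorem~\ref{tII3}, with parameters $\bigl(-\tfrac{\nu+\mu}{2}, \tfrac{\nu-\mu+1}{2};\tfrac12\bigr)$, I would apply \eqref{Pfaff2} in the form that interchanges the roles of $a$ and $b$, so that the new first parameter is $b=\tfrac{\nu-\mu+1}{2}$ and the new second is $c-a=\tfrac{\nu+\mu+1}{2}$. This produces exactly the parameters of the first term of tII6 at the argument $x^2/(x^2-1)$. The Pfaff prefactor $(1-x^2)^{-(\nu-\mu+1)/2}$ combines with the exterior $(1-x^2)^{-\mu/2}$ to yield $(1-x^2)^{-(\nu+1)/2}$, matching the denominator in the first summand of the statement.

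For the second hypergeometric function, the same Pfaff step yields parameters $\bigl(\tfrac{\nu-\mu+2}{2}, \tfrac{\nu+\mu+2}{2}; \tfrac32\bigr)$, which do not yet match the statement. Applying \eqref{Euler} converts these into $\bigl(\tfrac{\mu-\nu+1}{2}, \tfrac{-\nu-\mu+1}{2};\tfrac32\bigr)$. Using the identity $1 - x^2/(x^2-1) = 1/(1-x^2)$, the Euler prefactor $(1-w)^{c-a-b}$ simplifies to $(1-x^2)^{\nu+1/2}$; combining it with the exterior $(1-x^2)^{-\mu/2}$, the Pfaff factor $(1-x^2)^{-(\nu-\mu+2)/2}$, and the surviving factor of $x$ out front, the total power of $(1-x^2)$ on the second term becomes $(\nu-1)/2$, in agreement with the statement. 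The trigonometric and gamma coefficients inherited from Theorem~\ref{tII3} pass through unchanged.

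The main obstacle is purely bookkeeping: one must correctly track the three separate powers of $(1-x^2)$ per term (exterior, Pfaff, and, for the second term, Euler) and verify the parameter shifts precisely. A minor subtlety is that all complex powers must be interpreted as principal branches; since $1-x^2$ avoids the non-positive real axis for $x \in D_1$, the identity $1-x^2/(x^2-1) = 1/(1-x^2)$ and the ensuing exponent arithmetic hold unambiguously under the principal-value convention. Since both sides are analytic on $D_1$ wherever the gamma factors are finite, the identity established by the transformations extends to the full domain stated.
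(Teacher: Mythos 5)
Your derivation is correct, but it is not the route the paper takes for this theorem. The paper proves Theorem~\ref{tII6} directly from the tabulated representation of $\expe^{-{\rm i}\pi\mu}Q_\nu^\mu(z)$ with argument $z^2/\big(z^2-1\big)$ (MOS, Entry~30, p.~162), which carries sign-dependent phase factors according to $\pm\mathop{\rm Im} z>0$, and then averages the two boundary values via \eqref{FerrersQ}, the phases combining into the $\sin\big(\tfrac{\pi}{2}(\nu+\mu)\big)$ and $\cos\big(\tfrac{\pi}{2}(\nu+\mu)\big)$ coefficients. You instead transport the already-proved Theorem~\ref{tII3}: Pfaff \eqref{Pfaff2} on both ${}_2F_1$'s (legitimate since $x^2\in\C\setminus[1,\infty)$ and $x^2/\big(x^2-1\big)\in\C\setminus[1,\infty)$ for $x\in D_1$), followed by Euler \eqref{Euler} on the second, and your bookkeeping checks out: the parameters become $\big(\tfrac{\nu-\mu+1}{2},\tfrac{\nu+\mu+1}{2};\tfrac12\big)$ and $\big(\tfrac{\mu-\nu+1}{2},\tfrac{-\nu-\mu+1}{2};\tfrac32\big)$, while the exponents combine as $-\tfrac{\mu}{2}-\tfrac{\nu-\mu+1}{2}=-\tfrac{\nu+1}{2}$ and $-\tfrac{\mu}{2}-\tfrac{\nu-\mu+2}{2}+\nu+\tfrac12=\tfrac{\nu-1}{2}$, with the constants $\sqrt{\pi}\,2^{\mu-1}$ and $\sqrt{\pi}\,2^{\mu}$ matching the statement. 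In fact the paper itself records, in a remark following the Group~II theorems, that Theorems~\ref{tII5}, \ref{tII4} and~\ref{tII6} ``follow immediately'' from Theorems~\ref{tII1}, \ref{tII2} and~\ref{tII3} by Pfaff's transformations, so you have written out the argument the authors only sketch; it buys independence from the MOS table entry and from the boundary-value formula \eqref{FerrersQ}, at the price of resting on Theorem~\ref{tII3}. One small mismatch of hypotheses: your argument inherits the condition $\nu+\mu\notin-\N$ from Theorem~\ref{tII3}, whereas Theorem~\ref{tII6} is stated with $\nu+\mu\notin\N$; since the paper's own definition of $\Q_\nu^\mu$ through \eqref{defQ} and \eqref{FerrersQ} already requires $\nu+\mu\notin-\N$, this discrepancy lies in the statement rather than in your proof, but it is worth noting explicitly which parameter set your chain of transformations actually covers.
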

\begin{proof}
The hypergeometric representation of the associated Legendre function of the second kind~\cite[Entry 30, p.~162]{MOS},
\begin{gather*}
\expe^{-{\rm i}\pi\mu} Q_\nu^\mu (z) = \frac{\sqrt{\pi}\, 2^\mu \Ga\big(\frac{\nu+\mu+2}{2}\big)}
{\Ga\big(\frac{\nu-\mu+1}{2}\big)}
\expe^{\mp \frac12 {\rm i} \pi\left(\nu-\frac12\right)}z(z^2 - 1)^{{\frac12(\nu-1)}}
\F{\frac{-\nu-\mu+1}{2}, \frac{\mu-\nu+1}{2}}{\frac32}{\frac{z^2}{z^2-1}}
\\ \hphantom{\expe^{-{\rm i}\pi\mu} Q_\nu^\mu (z) =}
{} + \frac{\sqrt{\pi}\, 2^{\mu - 1} \Ga\big(\frac{\nu+\mu+1}{2}\big)}{\Ga\big(\frac{\nu-\mu+1}{2}\big)}
\expe^{\mp \frac12 {\rm i} \pi\left(\nu+\frac12\right)} (z^2 - 1)^{{\frac12\nu}}
\F{\frac{-\nu-\mu}{2}, \frac{\mu-\nu}{2}}{\frac12}{\frac{z^2}{z^2 - 1}}\!,
\end{gather*}
holds with the upper and lower sign chosen according to $\pm \mathop{\rm Im} z>0$.
Then \eqref{FerrersQ} yields the desired result.
\end{proof}

\begin{rem}
Theorem~\ref{tII3} (or~\ref{tII6}) writes $\Q_\nu^\mu(x)$
as a sum of an even and an odd solution of~the associated Legendre equation.
\end{rem}

\begin{rem}
The hypergeometric representations of $\Q_\nu^\mu(x)$ given in the previous six theorems can be written
in slightly different forms by applying Euler's transformation~\eqref{Euler}. For instance, the result of Theorem~\ref{tII3} can be written as
\begin{gather*}
\Q_\nu^\mu(x)=\sqrt{\pi}\,2^{\mu-1}\big(1-x^2\big)^{{\frac12\mu}}\Biggl[
\frac{-\sin\big(\tfrac{\pi}{2}(\nu+\mu)\big)
\Ga\big(\frac{\nu+\mu+1}{2}\big)}
{\Ga\big(\frac{\nu-\mu+2}{2}\big)}
\F{\frac{\nu+\mu+1}{2},\frac{\mu-\nu}{2}}{\frac12}{x^2}
\\[1ex] \hphantom{\Q_\nu^\mu(x)=\sqrt{\pi}\,2^{\mu-1}\big(1-x^2\big)^{{\frac12\mu}}\Biggl[}
{}+\frac{2\cos\big(\tfrac{\pi}{2}(\nu+\mu)\big)
\Ga\big(\frac{\nu+\mu+2}{2}\big)}
{\Ga\big(\frac{\nu-\mu+1}{2}\big)}
x\F{\frac{\nu+\mu+2}{2},\frac{\mu-\nu+1}{2}}{\frac32}{x^2\!}\Biggr].
\end{gather*}
\end{rem}

\begin{rem}
We observe that by applying Pfaff's transformation \eqref{Pfaff1} and~\eqref{Pfaff2}, the results of Theorems~\ref{tII5},~\ref{tII4} and~\ref{tII6}
follow immediately from those of Theorems~\ref{tII1},~\ref{tII2} and~\ref{tII3}, respectively.
The hypergeometric representation of $\Q_\nu^\mu(x)$ stated in Theorem~\ref{tII1} (or~\ref{tII5}) is of~the form~\eqref{connect1}
with the Ferrers function $\P_\nu^\mu(x)$ replaced by {the} hypergeometric representation
\begin{gather*}
\P_\nu^\mu(x)=\frac{2^\mu}{\Ga(1-\mu)}\big(1-x^2\big)^{-\frac12\mu}
\F{-\frac12\mu-\frac12\nu,\frac12-\frac12\mu+\frac12\nu}{1-\mu}{1-x^2}\!.
\end{gather*}
\end{rem}

\begin{rem}
Theorem~\ref{tII2} expresses $\Q_\nu^\mu(x)$ in terms of the associated Legendre functions $Q_\nu^\mu(x)$ and $Q_{-\nu-1}^\mu(x)$.
The argument of the hypergeometric function is $w_8=1/\big(1-x^2\big)$.
If~one considers $x\in(-1,1)$, then this function maps to $(1,\infty)$, where the Gauss hypergeometric
function takes two values depending on whether the value is approached from above or
below the ray $[1,\infty)$. Hence, to compute these values, one must use Theorems
\ref{Theorem1}--\ref{Theorem4}.
Then the arguments of the hypergeometric functions are transformed to
$1-x^2$, $x^2/\big(x^2-1\big)$, $x^2$, \mbox{$\big(x^2-1\big)/x^2$}, and
we obtain {the} results that are already contained in Theorems~\ref{tII1},~\ref{tII6},~\ref{tII3} and~\ref{tII5}, respectively.
Similar remarks apply to Theorem~\ref{tII4}.
\end{rem}

\section[Group III hypergeometric representations for Q nu mu (x)]
{Group III hypergeometric representations for $\boldsymbol{\Q_\nu^\mu(x)}$}\label{sec5}

In this section we {focus on} hypergeometric functions listed in
Section~\ref{sec1}, with arguments $w_j$, $j=13,\dots,18$.
Since we want to represent {the} Ferrers functions (defined on $D_1$), we replace $\sqrt{x^2-1}$ by
${\rm i}\sqrt{1-x^2}$. The latter is an analytic function on $D_1$. We note that ${\rm i}\sqrt{1-x^2}=\pm \sqrt{x^2-1}$
if~$\pm\mathop{\rm Im} x>0$.

\medskip

The following result in terms of trigonometric functions appears
in~\cite[p.~168]{MOS} in a different form.

\begin{thm}\label{tIII1}
Let $x\in D_1$, $\nu,\mu\in\C$, $\nu+\mu\notin-\N$,
$\nu+\frac12\notin\Z$. Then
for the upper sign chosen
everywhere as well as with the lower
sign chosen everywhere,
\begin{gather*}
\Q_\nu^\mu(x)=\frac{\sqrt\pi}{2^\frac32\big(1-x^2\big)^\frac14}
\\ \hphantom{\Q_\nu^\mu(x)=}
{}\times\Biggl[
\expe^{\pm\frac{{\rm i}\pi}{2}\left(\mu+\frac12\right)}
\frac{\Ga\big(\nu+\frac12\big)}{\Ga(\nu-\mu+1)}
\big(x\pm {\rm i}\sqrt{1\!-x^2}\big)^{\nu+\frac12}
\F{\frac12\!+\mu,\frac12\!-\mu}{\frac12-\nu}
{\frac{\mp x+{\rm i}\sqrt{1\!-x^2}}{2{\rm i}\sqrt{1-x^2}}}
\\ \hphantom{\Q_\nu^\mu(x)=\times\Biggl[}
{}+ \expe^{\mp\frac{{\rm i}\pi}{2}\left(\mu+\frac12\right)}
\frac{\Ga(\nu+\mu+1)}{\Ga\big(\nu+\frac32\big)}
\left(1+\expe^{\pm {\rm i}\pi(\nu+\mu)}\frac{\cos(\pi\mu)}{\cos(\pi\nu)}\right)
\big(x\mp {\rm i}\sqrt{1-x^2}\big)^{\nu+\frac12}
\\ \hphantom{\Q_\nu^\mu(x)=\times\Biggl[+}
{}\times\F{\frac12+\mu,\frac12-\mu}{\nu+\frac32}
{\frac{\mp x+{\rm i}\sqrt{1-x^2}}{2{\rm i}\sqrt{1-x^2}}}\!\Bigg],
\end{gather*}
or equivalently with $\mathop{\rm Re}\theta\in(0,\pi)$,
\begin{gather*}
\Q_\nu^\mu(\cos\theta)=\frac{\sqrt{\pi}}
{2^\frac32\sqrt{\sin\theta}}\Biggl[
\expe^{\pm\frac{{\rm i}\pi}{2}\left(\mu+\frac12\right)}
\frac{\Ga\big(\nu+\frac12\big)}{\Ga(\nu-\mu+1)}
\expe^{\pm {\rm i}\left(\nu+\frac12\right)\theta}
\F{\frac12+\mu,\frac12-\mu}{\frac12-\nu}{\frac12\pm\frac{\rm i}{2}\cot\theta}
\\ \hphantom{\Q_\nu^\mu(\cos\theta)=}
{} + \expe^{\mp\frac{{\rm i}\pi}{2}\left(\mu+\frac12\right)}
\frac{\Ga(\nu+\mu+1)}{\Ga\big(\nu+\frac32\big)}
\left(1+\expe^{\mp{\rm i}\pi(\nu+\mu)}\frac{\cos(\pi\mu)}{\cos(\pi\nu)}\right)
\\ \hphantom{\Q_\nu^\mu(\cos\theta)=+}
{}\times
\expe^{\mp {\rm i}\left(\nu+\frac12\right)\theta}
\F{\frac12+\mu,\frac12-\mu}{\nu+\frac32}
{\frac12\pm\frac{\rm i}{2}\cot\theta}\Biggr].
\end{gather*}
\end{thm}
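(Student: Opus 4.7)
The plan parallels the approach of Theorems~\ref{tI5}, \ref{tI6}, \ref{tII2}, and~\ref{tII4}. First, assuming $\mathop{\rm Im} x>0$, I would invoke the connection formula~\eqref{FerrersQc} to write $\Q_\nu^\mu(x)$ as a linear combination of $Q_\nu^\mu(x)$ and $Q_{-\nu-1}^\mu(x)$. Each of these two associated Legendre functions is then expanded using the Group~III hypergeometric representation of $\expe^{-{\rm i}\pi\mu} Q_\nu^\mu(z)$ taken from~\cite{MOS} (the entry corresponding to the Group~III arguments $w_{13},\dots,w_{18}$); the representation for $Q_{-\nu-1}^\mu(x)$ comes from the same MOS formula under the substitution $\nu\mapsto -\nu-1$. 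Since $\mathop{\rm Im} x>0$, one replaces $\sqrt{x^2-1}$ by ${\rm i}\sqrt{1-x^2}$ throughout.

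The algebraic manipulation hinges on the identity $\bigl(x+{\rm i}\sqrt{1-x^2}\bigr)\bigl(x-{\rm i}\sqrt{1-x^2}\bigr)=1$, which permits me to rewrite the factor $\bigl(x+{\rm i}\sqrt{1-x^2}\bigr)^{-\nu-\frac12}$ arising from the MOS expansion of $Q_\nu^\mu$ as $\bigl(x-{\rm i}\sqrt{1-x^2}\bigr)^{\nu+\frac12}$. Under $\nu\mapsto -\nu-1$ the lower entry $\nu+\tfrac32$ of the hypergeometric function becomes $\tfrac12-\nu$, and the exponent on $x+{\rm i}\sqrt{1-x^2}$ flips sign; together these yield the two distinct hypergeometric terms of the theorem, coming respectively from the $Q_{-\nu-1}^\mu$- and $Q_\nu^\mu$-contributions (the first carries the parameter $\tfrac12-\nu$, the second $\nu+\tfrac32$, matching the statement). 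The lower-sign version then follows either by repeating the argument in the lower half-plane with~\eqref{FerrersQf} replacing~\eqref{FerrersQc}, or by analytic continuation across $(-1,1)$, since both sides are analytic on $D_1$ under the stated restrictions on $\nu,\mu$.

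I expect the principal obstacle to be the algebraic bookkeeping of the phase factors and $\Ga$-quotients. In particular, the composite coefficient $1+\expe^{\pm{\rm i}\pi(\nu+\mu)}\cos(\pi\mu)/\cos(\pi\nu)$ multiplying the $Q_\nu^\mu$-contribution is produced by combining the prefactor $\cos(\pi\mu)-{\rm i}\sin(\pi(\mu-\nu))/(2\cos(\pi\nu))$ of $Q_\nu^\mu$ in~\eqref{FerrersQc} with the portion of the $Q_{-\nu-1}^\mu$-contribution that, after the inversion step above, happens to carry the same power $\bigl(x\mp{\rm i}\sqrt{1-x^2}\bigr)^{\nu+\frac12}$; simplification at this stage uses the reflection formula~\eqref{Eulerreflect} to collapse the $\Ga$-products. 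Finally, the trigonometric reformulation is immediate from the substitution $x=\cos\theta$: then $\sqrt{1-x^2}=\sin\theta$ and $x\pm{\rm i}\sqrt{1-x^2}=\expe^{\pm{\rm i}\theta}$, so that $\bigl(x\pm{\rm i}\sqrt{1-x^2}\bigr)^{\nu+\frac12}=\expe^{\pm{\rm i}(\nu+\frac12)\theta}$ and the hypergeometric argument reduces to $\tfrac12\pm\tfrac{{\rm i}}{2}\cot\theta$, matching the second displayed form.
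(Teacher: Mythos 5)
Your route is the paper's own: write $\Q_\nu^\mu$ via \eqref{FerrersQc} (resp.\ \eqref{FerrersQf}) as a combination of $Q_\nu^\mu$ and $Q_{-\nu-1}^\mu$, insert the Group~III representation \eqref{MOS31} for both (the second via $\nu\mapsto-\nu-1$), use $\bigl(x+{\rm i}\sqrt{1-x^2}\bigr)\bigl(x-{\rm i}\sqrt{1-x^2}\bigr)=1$, and finish with \eqref{Eulerreflect} and a trigonometric identity. However, there is a genuine gap: the theorem asserts both sign versions for \emph{all} $x\in D_1$, while your derivation only produces the upper-sign identity for $\mathop{\rm Im} x>0$ and the lower-sign one for $\mathop{\rm Im} x<0$, and the extension rests on your unproved assertion that ``both sides are analytic on $D_1$''. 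That assertion is precisely the nontrivial point, because the principal branch of ${}_2F_1$ is cut along $[1,\infty)$ and one must verify that $w_{13}$ (and $w_{17}$) never meets that cut for $x\in D_1$, and that $x\pm{\rm i}\sqrt{1-x^2}\notin(-\infty,0]$. The paper proves this first, via $x=\cos\theta$, under which $w_{13}=\tfrac12+\tfrac{{\rm i}}{2}\cot\theta$ and $\cot$ maps the strip conformally onto $\C\setminus\bigl((-{\rm i}\infty,-{\rm i}]\cup[{\rm i},{\rm i}\infty)\bigr)$, so $w_{13}(D_1)$ avoids $(-\infty,0]\cup[1,\infty)$. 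This check cannot be waved away: in Theorem~\ref{tIII3} the analogous argument $w_{15}$ \emph{does} hit the cut (the imaginary $x$-axis is mapped onto $[1,\infty)$), which is exactly why that result is restricted to $D_1^+$.

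A second, smaller inaccuracy: your account of the coefficient $1+\expe^{\pm{\rm i}\pi(\nu+\mu)}\cos(\pi\mu)/\cos(\pi\nu)$ is wrong. After the inversion, the $Q_{-\nu-1}^\mu$-contribution is a \emph{single} term carrying the power $\bigl(x\pm{\rm i}\sqrt{1-x^2}\bigr)^{\nu+\frac12}$ and lower parameter $\tfrac12-\nu$; no portion of it carries $\bigl(x\mp{\rm i}\sqrt{1-x^2}\bigr)^{\nu+\frac12}$, and the two hypergeometric terms never mix (they have different parameters and are generically independent solutions). The composite coefficient is simply the $Q_\nu^\mu$-prefactor of \eqref{FerrersQc}/\eqref{FerrersQf} rewritten through the elementary identity $2\expe^{\pm{\rm i}\pi\mu}\bigl(\cos(\pi\mu)\mp{\rm i}\sin(\pi(\mu-\nu))/(2\cos(\pi\nu))\bigr)=1+\expe^{\pm{\rm i}\pi(\nu+\mu)}\cos(\pi\mu)/\cos(\pi\nu)$, the factor $\expe^{\pm{\rm i}\pi\mu}$ coming from the normalization $\expe^{-{\rm i}\pi\mu}Q_\nu^\mu$ in \eqref{MOS31}; the reflection formula \eqref{Eulerreflect} is used only in the other term, converting $\Ga(\mu-\nu)\sin(\pi(\mu-\nu))\big/\bigl(\cos(\pi\nu)\Ga\bigl(\tfrac12-\nu\bigr)\bigr)$ into $\Ga\bigl(\nu+\tfrac12\bigr)/\Ga(\nu-\mu+1)$. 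Carrying out the computation would force you to correct this bookkeeping, but as written the described combination step does not exist.
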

\begin{proof}
We first prove the result with the {upper} sign.
We claim that
\begin{gather*}
w_{13}=\frac{-x+{\rm i}\sqrt{1-x^2}}{2{\rm i}\sqrt{1-x^2}}
\end{gather*}
is a conformal map from $D_1$ to the complex plane cut along the rays $(-\infty,0]$ and $[1,\infty)$.
To see this let $x=\cos\theta$. This is a conformal map from the strip $S=\{\theta\colon \mathop{\rm Re}\theta<\pi\}$ onto $D_1$.
Then
\begin{gather*}
w_{13}=\frac12 +\frac{\rm i}{2}\cot\theta .
\end{gather*}
Now $\cot\theta$ is a conformal map from $S$ to $\C\setminus((- {\rm i}\infty,-{\rm i}]\cup [{\rm i}, {\rm i}\infty))$ which establishes the claim.
Hence, using the principal value of the hypergeometric function, the composition of the hypergeometric function
with $w_{13}$
is an analytic function on $D_1$. Further, we note
that $x\pm {\rm i}\sqrt{1-x^2}\notin (-\infty,0]$ for $x\in D_1$.
Therefore, the right-hand side of the stated identity is an analytic function on $D_1$, so
it is sufficient to prove this identity for $\mathop{\rm Im} x>0$.
For $\mathop{\rm Im} z>0$, we insert
the hypergeometric representation of the associated
Legendre function~\cite[Entry 31, p.~162]{MOS}
\begin{gather}\label{MOS31}
\expe^{-{\rm i}\pi\mu} Q_\nu^\mu (z) = \frac{\sqrt{\pi}\,\Ga(\nu\!+\mu\!+1)
\big(z\!-\sqrt{z^2\!-1}\big)^{\nu+\frac12}}{\sqrt{2}\Ga\big(\nu+\frac32\big)(z^2\!-1)^\frac14}
\F{\frac12+\mu,\frac12\!-\mu}{\nu+\frac32}{\frac{-z+\sqrt{z^2-1}}{2\sqrt{z^2-1}}}\!,
\end{gather}
in~\eqref{FerrersQc},
We obtain the desired result by using \eqref{Eulerreflect} and the identity
\begin{gather*}
2\expe^{{\rm i}\pi\mu}\left(\sin(\pi\nu)- {\rm i}\frac{\sin(\pi(\mu-\nu))}{2\cos(\pi\nu)}\right)=1+\expe^{{\rm i}\pi(\nu+\mu)}\frac{\cos(\pi\mu)}{\cos(\pi\nu)} .
\end{gather*}
The proof of the result with the lower sign is similar.
The function
\begin{gather*}
w_{17}=\frac{x+{\rm i}\sqrt{1-x^2}}{2{\rm i}\sqrt{1-x^2}}{,}
\end{gather*}
is also a conformal map from $D_1$ to the complex plane cut along the rays $(-\infty,0]$ and $[1,\infty)$.
Hence the right-hand side of the claimed identity is again an analytic function on $D_1$, so it is sufficient to prove it for $\mathop{\rm Im} x>0$ or $\mathop{\rm Im} x<0$.
The desired result follows from \eqref{FerrersQc} and the hypergeometric representation of the associated
Legendre function cf.~\cite[Entry 35, p.~162]{MOS} in the half-plane $\mathop{\rm Im} x>0$. However, it is
easier to use \eqref{FerrersQf}
and \eqref{MOS31} in the half-plane $\mathop{\rm Im} x<0$. In~this half-plane $\sqrt{x^2-1}=-{\rm i}\sqrt{1-x^2}$ and the desired representation follows. In~order to obtain the trigonometric form of the representation set $x=\cos\theta$ with
$\mathop{\rm Re}\theta\in(0,\pi)$
and note that ${\rm i}\sqrt{1-x^2}=\sin\theta$.
This completes the proof.
\end{proof}

The following result in terms of trigonometric functions appears
in~\cite[p.~168]{MOS} in a different form.

\begin{thm}\label{tIII2}
Let $x\in D_1$, $\nu,\mu\in\C$, $\nu+\mu\notin-\N$,
$\nu+\frac12\notin\Z$. Then for the upper sign chosen
everywhere as well as with the lower sign chosen everywhere,
\begin{gather*}
\Q_\nu^\mu(x)=\sqrt{\pi}\,2^{\mu-1} \big(1-x^2\big)^{{\frac12\mu}}
\\ \hphantom{\Q_\nu^\mu(x)=}
{}\times\Biggl[\expe^{\pm {\rm i}\pi\left(\mu+\frac12\right)}
\frac{\Ga\big(\nu+\frac12\big)}{\Ga(\nu-\mu+1)}
\big(x\pm {\rm i}\sqrt{1-x^2}\big)^{\nu-\mu}
\F{\frac12+\mu,\mu-\nu}{\frac12-\nu}
{\frac{x\mp {\rm i}\sqrt{1-x^2}}{x\pm {\rm i}\sqrt{1-x^2}}}
\\ \hphantom{\Q_\nu^\mu(x)=\times\Biggl[}
{}+ \frac{\Ga(\nu+\mu+1)}{\Ga\big(\nu+\frac32\big)}
\left(1+\expe^{\pm {\rm i}\pi(\nu+\mu)}\frac{\cos(\pi\mu)}{\cos(\pi\nu)}\right)
\big(x\mp {\rm i}\sqrt{1-x^2}\big)^{\nu+\mu+1}
\\ \hphantom{\Q_\nu^\mu(x)=\times\Biggl[+}
{}\times
\F{\frac12+\mu,\nu+\mu+1}{\nu+\frac32}
{\frac{x\mp {\rm i}\sqrt{1-x^2}}{x\pm {\rm i}\sqrt{1-x^2}}}\!\Biggr],
\end{gather*}
or equivalently with $\mathop{\rm Re}\theta\in(0,\pi)$,
\begin{gather*}
\Q_\nu^\mu(\cos\theta)=
\sqrt{\pi}\,2^{\mu-1}(\sin\theta)^\mu
\Biggl[\expe^{\pm {\rm i}\pi\left(\mu+\frac12\right)}
\frac{\Ga\big(\nu+\frac12\big)}{\Ga(\nu-\mu+1)}
\expe^{\pm {\rm i}(\nu-\mu)\theta}
\F{\frac12+\mu,\mu-\nu}{\frac12-\nu}
{\expe^{\mp 2{\rm i}\theta}}
\\ \hphantom{\Q_\nu^\mu(\cos\theta)=}
{}+ \!\frac{\Ga(\nu\!+\!\mu\!+\!1)}{\Ga\big(\nu+\frac32\big)}
\left(\!1\!+\!\expe^{\pm {\rm i}\pi(\nu+\mu)}\frac{\cos(\pi\mu)}{\cos(\pi\nu)}\right)
\expe^{\mp {\rm i}(\nu+\mu+1)\theta}
\F{\frac12\!+\!\mu,\nu\!+\!\mu\!+\!1}{\nu+\frac32}
{\expe^{\mp 2{\rm i}\theta}}\!\Biggr].
\end{gather*}
\end{thm}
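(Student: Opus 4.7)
The plan is to derive Theorem~\ref{tIII2} from Theorem~\ref{tIII1} via Pfaff's transformation~\eqref{Pfaff1}, paralleling the way Theorems~\ref{tII5},~\ref{tII4},~\ref{tII6} follow from Theorems~\ref{tII1},~\ref{tII2},~\ref{tII3}. Setting $w=\frac{\mp x+{\rm i}\sqrt{1-x^2}}{2{\rm i}\sqrt{1-x^2}}$ for the hypergeometric argument of Theorem~\ref{tIII1}, a direct computation gives
\begin{gather*}
1-w=\frac{x\pm {\rm i}\sqrt{1-x^2}}{2{\rm i}\sqrt{1-x^2}},\qquad\frac{w}{w-1}=\frac{x\mp {\rm i}\sqrt{1-x^2}}{x\pm {\rm i}\sqrt{1-x^2}},
\end{gather*}
so $w/(w-1)$ is exactly the argument $w_{14}$ appearing in Theorem~\ref{tIII2}.

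Applying~\eqref{Pfaff1} with $a=\tfrac12+\mu$ to each of the two hypergeometric functions in Theorem~\ref{tIII1} transforms the second parameter $b=\tfrac12-\mu$ into $c-b$. For the first summand, with $c=\tfrac12-\nu$, this yields $c-b=\mu-\nu$, giving $\F{\frac12+\mu,\mu-\nu}{\frac12-\nu}{\frac{w}{w-1}}$; for the second summand, with $c=\nu+\tfrac32$, one obtains $c-b=\nu+\mu+1$, giving $\F{\frac12+\mu,\nu+\mu+1}{\nu+\frac32}{\frac{w}{w-1}}$. Both parameter sets match those in Theorem~\ref{tIII2} exactly.

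The Pfaff prefactor $(1-w)^{-(\frac12+\mu)}=\big(x\pm {\rm i}\sqrt{1-x^2}\big)^{-(\frac12+\mu)}\big(2{\rm i}\sqrt{1-x^2}\big)^{\frac12+\mu}$ then combines with the prefactors of Theorem~\ref{tIII1}. The power of $x\pm {\rm i}\sqrt{1-x^2}$ drops from $\nu+\tfrac12$ to $\nu-\mu$, and the combination $2^{-3/2}(1-x^2)^{-1/4}\expe^{\pm\frac{{\rm i}\pi}{2}(\mu+\frac12)}$ is converted to $2^{\mu-1}(1-x^2)^{\mu/2}\expe^{\pm {\rm i}\pi(\mu+\frac12)}$, reproducing the prefactors of Theorem~\ref{tIII2}. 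The trigonometric form then follows immediately by setting $x=\cos\theta$ with $\mathop{\rm Re}\theta\in(0,\pi)$: one has $x\pm {\rm i}\sqrt{1-x^2}=\expe^{\pm{\rm i}\theta}$, so $w_{14}=\expe^{\mp 2{\rm i}\theta}$ and $(1-x^2)^{\mu/2}=(\sin\theta)^\mu$.

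The main obstacle is the careful bookkeeping of multivalued power factors when performing the Pfaff step. In particular, for the lower-sign branch the factor $(1-w)^{-(\frac12+\mu)}$ involves $(-x+{\rm i}\sqrt{1-x^2})^{-(\frac12+\mu)}$, which under the principal branch equals $\expe^{-{\rm i}\pi(\frac12+\mu)}(x-{\rm i}\sqrt{1-x^2})^{-(\frac12+\mu)}$; this phase is essential to obtain the $\expe^{-{\rm i}\pi(\mu+\frac12)}$ factor in Theorem~\ref{tIII2}. Once the identity is verified on $x\in(-1,1)$, analytic continuation to all of $D_1$ is automatic: with $x=\cos\theta$, the argument $w_{14}=\expe^{\mp 2{\rm i}\theta}$ has argument $\mp 2\mathop{\rm Re}\theta\in(-2\pi,0)\cup(0,2\pi)$ for $\mathop{\rm Re}\theta\in(0,\pi)$, so $w_{14}$ avoids $[1,\infty)$, and the factor $x\pm {\rm i}\sqrt{1-x^2}$ avoids $(-\infty,0]$ on $D_1$ as established in the proof of Theorem~\ref{tIII1}.
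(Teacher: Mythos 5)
Your proposal is correct, but it takes a genuinely different route from the paper's own proof of Theorem~\ref{tIII2}. The paper argues directly: it shows that $w_{14}=(x-{\rm i}\sqrt{1-x^2})/(x+{\rm i}\sqrt{1-x^2})$ maps $D_1$ conformally onto the plane cut along $[0,\infty)$, so the right-hand side is analytic on $D_1$ and it suffices to work in a half-plane; it then inserts the representation \eqref{MOS32} (MOS Entry~32) into the connection formula \eqref{FerrersQc} for the upper sign, and for the lower sign uses MOS Entry~36 in $\mathop{\rm Im}z>0$ or \eqref{MOS32} in $\mathop{\rm Im}z<0$. You instead deduce the theorem from Theorem~\ref{tIII1} by Pfaff's transformation \eqref{Pfaff1} with $a=\tfrac12+\mu$ --- exactly the derivation the authors mention, without detail, in the remark following Theorem~\ref{tIII3}. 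Your parameter and argument computations are right ($c-b=\mu-\nu$, resp.\ $c-b=\nu+\mu+1$, and $w/(w-1)=w_{14}$, resp.\ $u/v$), and the phase bookkeeping does reproduce $\expe^{\pm{\rm i}\pi(\mu+\frac12)}$ in the first summand and no phase in the second, since the relevant principal-power factorizations such as $\bigl(u/(2{\rm i}\sqrt{1-x^2})\bigr)^{c}=u^{c}\bigl(2{\rm i}\sqrt{1-x^2}\bigr)^{-c}$ are legitimate on $(-1,1)$, where the arguments lie in $(-\pi/2,\pi/2)$. The identity-theorem step to all of $D_1$ is also fine, since $w_{14}$ avoids $[1,\infty)$ and $x\pm{\rm i}\sqrt{1-x^2}$ avoids $(-\infty,0]$ there. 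What your route buys is independence from the additional MOS entries and a reduction of the theorem to branch bookkeeping applied to an already-proved result; what the paper's route buys is a self-contained statement of where the formula comes from (the Legendre function $Q_\nu^\mu$ via \eqref{FerrersQc}/\eqref{FerrersQf}) together with the conformal-map description of the argument, which is reused later when discussing convergence regions.

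One slip to fix: your displayed identity $1-w=\bigl(x\pm{\rm i}\sqrt{1-x^2}\bigr)/\bigl(2{\rm i}\sqrt{1-x^2}\bigr)$ is correct only for the upper sign. For the lower sign, $w=w_{17}=\bigl(x+{\rm i}\sqrt{1-x^2}\bigr)/\bigl(2{\rm i}\sqrt{1-x^2}\bigr)$ gives $1-w=\bigl(-x+{\rm i}\sqrt{1-x^2}\bigr)/\bigl(2{\rm i}\sqrt{1-x^2}\bigr)$, whose numerator is $-\bigl(x-{\rm i}\sqrt{1-x^2}\bigr)$, not $x-{\rm i}\sqrt{1-x^2}$. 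Your closing paragraph in fact works with the correct expression and extracts the phase $\expe^{-{\rm i}\pi(\mu+\frac12)}$ from $\bigl(-x+{\rm i}\sqrt{1-x^2}\bigr)^{-(\frac12+\mu)}=\expe^{-{\rm i}\pi(\frac12+\mu)}\bigl(x-{\rm i}\sqrt{1-x^2}\bigr)^{-(\frac12+\mu)}$, so the proof stands, but the display contradicts it and should be corrected.
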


\begin{proof}
We first prove the result with the upper sign.
We note that the function
\begin{gather*}
w_{14}=\frac{x-{\rm i}\sqrt{1-x^2}}{x+{\rm i}\sqrt{1-x^2}}=\frac{w_{13}}{w_{13}-1}
\end{gather*}
is a conformal map from $D_1$ to the complex plane cut along the ray $[0,\infty)$.
Hence the right-hand side of the stated identity is an analytic function on $D_1$, so it is sufficient to prove it for $\mathop{\rm Im} x>0$.
Now we insert the hypergeometric representation~\cite[Entry 32, p.~162]{MOS}
\begin{gather}
\expe^{-{\rm i}\pi\mu} Q_\nu^\mu (z)=\sqrt{\pi}\,2^\mu
\frac{\Ga(\nu+\mu+1)}{\Ga\big(\mu+\frac32\big)}(z^2-1)^{\frac12\mu}
\big(z+\sqrt{z^2-1}\big)^{\nu+\mu+1}\nonumber
\\ \hphantom{\expe^{-{\rm i}\pi\mu} Q_\nu^\mu (z)=}
{}\times\F{\frac12+\mu,\nu+\mu+1}{\nu+\frac32}{\frac{z-\sqrt{z^2-1}}{z+\sqrt{z^2-1}}}\label{MOS32}
\end{gather}
in \eqref{FerrersQc}, and obtain the desired result.
To prove the result with the lower sign we can either use the hypergeometric representation~\cite[Entry 36, p.~163]{MOS}
of $Q_\nu^\mu(z)$ in the half-plane $\mathop{\rm Im} z>0$ or \eqref{MOS32} in the half-plane $\mathop{\rm Im} z<0$.
This completes the proof.
\end{proof}

\begin{thm}\label{tIII3}
Let $x\in D_1^+$, $\nu,\mu\in\C$, $2\mu\not\in\Z$, $\nu+\mu\notin-\N$. Then
for the upper sign chosen everywhere as well as with the lower sign chosen everywhere,
\begin{gather*}
\Q_\nu^\mu(x)=\frac{\Ga(-\mu)}{2^{\mu+1}}
\frac{\Ga(\nu\!+\!\mu\!+\!1)}{\Ga(\nu\!-\!\mu\!+\!1)}\big(1-x^2\big)^{\frac12\mu}
\big(x\pm {\rm i}\sqrt{1\!-\!x^2}\big)^{\nu-\mu}
\F{\frac12\!+\!\mu,\mu\!-\!\nu}{1+2\mu}
{\frac{2{\rm i}\sqrt{1-x^2}}{\pm \, x\!+\!{\rm i}\sqrt{1\!-\!x^2}}}\!
\\ \hphantom{\Q_\nu^\mu(x)=}
{}+2^{\mu-1}\Gamma(\mu)
\frac{\cos(\pi\mu)\big(x\pm {\rm i}\sqrt{1-x^2}\big)^{\nu+\mu}}
{\big(1-x^2\big)^{{\frac12\mu}}}\F{\frac12-\mu,-\nu-\mu}{1-2\mu}
{\frac{2{\rm i}\sqrt{1-x^2}}{\pm\,x+{\rm i}\sqrt{1-x^2}}}\!,
\end{gather*}
or equivalently with $\mathop{\rm Re}\theta\in(0,\pi)$,
\begin{gather*}
\Q_\nu^\mu(\cos\theta)=
\frac{\Ga(-\mu)}{2^{\mu+1}}
\frac{\Ga(\nu+\mu+1)}{\Ga(\nu-\mu+1)}(\sin\theta)^{\mu}
\expe^{\pm {\rm i}(\nu-\mu)\theta}\F{\frac12+\mu,\mu-\nu}{1+2\mu}
{1-\expe^{\mp 2{\rm i}\theta}}\nonumber
\\ \hphantom{\Q_\nu^\mu(\cos\theta)=}
{}+2^{\mu-1}\Gamma(\mu)
\frac{\cos(\pi\mu)\expe^{\pm {\rm i}(\nu+\mu)\theta}}
{(\sin\theta)^{\mu}}\F{\frac12-\mu,-\nu-\mu}{1-2\mu}
{1-\expe^{\mp 2{\rm i}\theta}}\!.
\end{gather*}
\end{thm}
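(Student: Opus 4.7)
My plan follows the same template used for Theorems \ref{tIII1} and \ref{tIII2}: convert a Gauss hypergeometric representation of $\expe^{-{\rm i}\pi\mu}Q_\nu^\mu(z)$ valid on a half-plane into a representation of $\Q_\nu^\mu(x)$ on $D_1^+$, using the limit representation \eqref{FerrersQ} together with an appropriate analytic continuation argument. The key structural observation is that the RHS of the theorem contains only two hypergeometric pieces with the \emph{same} argument $w=\frac{2{\rm i}\sqrt{1-x^2}}{\pm x+{\rm i}\sqrt{1-x^2}}$ (and no appearance of $Q_{-\nu-1}^\mu$), which is exactly the shape one obtains by applying \eqref{FerrersQ} directly to a single MOS entry whose two summands split into $\Ga(\mu)$- and $\Ga(-\mu)$-terms, rather than via the connection formulas \eqref{FerrersQc}/\eqref{FerrersQf} used for Theorems \ref{tIII1} and \ref{tIII2}. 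Thus the starting point will be the MOS entry (p.~162) that represents $\expe^{-{\rm i}\pi\mu}Q_\nu^\mu(z)$ on the half-plane $\mathop{\rm Re} z>0$ as the appropriate sum of two ${}_2F_1$'s with parameters $\bigl(\tfrac12+\mu,\mu-\nu;1+2\mu\bigr)$ and $\bigl(\tfrac12-\mu,-\nu-\mu;1-2\mu\bigr)$ and argument $w_{15}(z)=\frac{2\sqrt{z^2-1}}{z+\sqrt{z^2-1}}$, together with its companion using $w_{16}$ for the lower-sign case.

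First I would verify that the claimed RHS defines an analytic function on $D_1^+$. For this, set $x=\cos\theta$ and compute
\begin{gather*}
\frac{2{\rm i}\sqrt{1-x^2}}{\pm x+{\rm i}\sqrt{1-x^2}}
 = 1-\expe^{\mp 2{\rm i}\theta},
\end{gather*}
so for $\mathop{\rm Re}\theta\in(0,\pi/2)$ (i.e., $x\in(0,1)$) the argument stays off the cut $[1,\infty)$; by the open mapping argument already used in the proofs of Theorems \ref{tIII1}--\ref{tIII2}, this extends to the conformal image of $D_1^+$ under $x\mapsto w$, so the principal branch of ${}_2F_1$ composed with $w$ is analytic on $D_1^+$. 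The powers $\big(x\pm{\rm i}\sqrt{1-x^2}\big)^{\nu\pm\mu}$ and $\big(1-x^2\big)^{\pm\mu/2}$ are likewise analytic on $D_1^+$ (with principal branches), and the restriction $2\mu\notin\Z$ ensures the Gauss series denominator parameters $1\pm2\mu$ avoid $-\N_0$.

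Next, since both sides are analytic on $D_1^+$, it suffices to verify the identity on the upper-half-plane slice $D_1^+\cap\{x\colon \mathop{\rm Im} x>0\}$. There the substitution $\sqrt{x^2-1}={\rm i}\sqrt{1-x^2}$ is valid (principal branches), so the MOS representation of $\expe^{-{\rm i}\pi\mu}Q_\nu^\mu(x)$ transcribes directly into the two terms on the RHS, up to reconciling the gamma/coefficient factors. Applying \eqref{FerrersQ} then averages the two boundary values: the first summand carries the prefactor $\tfrac12\bigl(\expe^{-{\rm i}\pi\mu/2}\expe^{-{\rm i}\pi\mu}+\expe^{{\rm i}\pi\mu/2}\expe^{-{\rm i}\pi\mu}\bigr)$, which simplifies using Euler's identity to the $\cos(\pi\mu)$ visible in front of the $\Ga(\mu)$-term, while the second summand collects into $\Ga(-\mu)$-times-ratios, producing exactly the stated RHS. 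For the lower-sign version, I would either repeat the argument in $\mathop{\rm Im} x<0$ using the companion MOS entry for $w_{16}$, or use the symmetry $w_{16}(x)=w_{15}(-x)$ composed with the reflection $x\mapsto-x$.

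The main obstacle is bookkeeping rather than conceptual: ensuring that the various $\expe^{\pm{\rm i}\pi\mu/2}$ phases from \eqref{FerrersQ}, the $\expe^{-{\rm i}\pi\mu}$ absorbed into $Q_\nu^\mu$, and the reflection/duplication simplifications combine to produce exactly the coefficient $\cos(\pi\mu)\Ga(\mu)/2^{1-\mu}$ on the second summand (while the first summand's phases cancel cleanly because the $\Ga(-\mu)\Ga(\nu+\mu+1)/\Ga(\nu-\mu+1)$ term carries no residual $\expe^{\pm{\rm i}\pi\mu}$ factor after averaging). The equivalent trigonometric form then follows by setting $x=\cos\theta$ and using $x\pm{\rm i}\sqrt{1-x^2}=\expe^{\pm{\rm i}\theta}$ together with the identity $\frac{2{\rm i}\sqrt{1-x^2}}{\pm x+{\rm i}\sqrt{1-x^2}}=1-\expe^{\mp 2{\rm i}\theta}$ derived above.
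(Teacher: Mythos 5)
Your overall frame (analyticity of the right-hand side on $D_1^+$ via the conformal map $w_{15}$, then verification on a smaller set) matches the paper, but the core verification step has a genuine gap. You propose to verify the identity on $D_1^+\cap\{x\colon\mathop{\rm Im} x>0\}$ and then to ``apply \eqref{FerrersQ}''; but \eqref{FerrersQ} is a boundary-value (limit) formula valid only for $x\in(-1,1)$, so it cannot be invoked at points with $\mathop{\rm Im} x>0$ --- there one must use a connection formula such as \eqref{FerrersQa}--\eqref{FerrersQc}. If instead you work on $x\in(0,1)$ with \eqref{FerrersQ}, your bookkeeping fails: the two boundary values $Q_\nu^\mu(x\pm{\rm i}0)$, expressed through the MOS entry with argument $w_{15}$, involve \emph{different} hypergeometric arguments, namely $1-\expe^{-2{\rm i}\theta}$ from above and $1-\expe^{2{\rm i}\theta}$ from below (and different branch values of the power prefactors), so the ${}_2F_1$ factors are not common to the two terms being averaged and cannot be pulled out. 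What the average actually produces is a four-term mixture of the upper-sign and the lower-sign expressions, i.e.\ their mean, which by itself does not prove that either single-sign expression separately equals $\Q_\nu^\mu(x)$ --- and that is precisely what the theorem asserts. (The phase arithmetic is also off: $\tfrac12\expe^{-{\rm i}\pi\mu}\big(\expe^{-{\rm i}\pi\mu/2}+\expe^{{\rm i}\pi\mu/2}\big)=\expe^{-{\rm i}\pi\mu}\cos\big(\tfrac12\pi\mu\big)$, not $\cos(\pi\mu)$; the $\cos(\pi\mu)$ in the statement has a different origin.)

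The paper sidesteps all of this: after the same analyticity reduction it remarks that the route through \eqref{FerrersQ} and MOS Entry 33 is possible but instead, for the upper sign, uses the connection formula \eqref{FerrersQb} (valid for $\mathop{\rm Im} x>0$) together with the single-term representation of the associated Legendre function of the \emph{first} kind, MOS Entry 15,
$P_\nu^\mu(z)=\frac{2^\mu\big(z+\sqrt{z^2-1}\big)^{\nu+\mu}}{\Ga(1-\mu)\,(z^2-1)^{\frac12\mu}}\,
{}_2F_1\big({-}\nu-\mu,\tfrac12-\mu;1-2\mu;\tfrac{2\sqrt{z^2-1}}{z+\sqrt{z^2-1}}\big)$,
applied to both $P_\nu^\mu$ and $P_\nu^{-\mu}$; this immediately yields two terms with the \emph{same} argument and the stated coefficients, and explains why the hypothesis here is $2\mu\notin\Z$ rather than $\nu+\tfrac12\notin\Z$: the theorem is the connection relation \eqref{connect1} in hypergeometric form, as the subsequent Remark notes. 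The lower-sign case is then obtained from MOS Entry 34, or from \eqref{FerrersQe} and \eqref{MOS32} in the quadrant $\mathop{\rm Re} x>0$, $\mathop{\rm Im} x<0$. To salvage your route, replace \eqref{FerrersQ} by \eqref{FerrersQb} (upper sign) and \eqref{FerrersQe} (lower sign), and feed in hypergeometric representations of $P_\nu^{\pm\mu}$ with argument $w_{15}$, rather than averaging boundary values of $Q_\nu^\mu$.
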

\begin{proof}
We first prove the result with the upper sign.
We observe that the function
\begin{gather*}
w_{15}=\frac{2{\rm i}\sqrt{1-x^2}}{x+{\rm i}\sqrt{1-x^2}}=\frac{1}{w_{17}}{,}
\end{gather*}
maps the imaginary axis to the branch cut $[1,\infty)$ of the hypergeometric function.
Then $w_{15}$ is a conformal map from~$D_1^+$ to the upper half-plane.
Therefore, the right-hand side of the stated identity is analytic on~$D_1^+$
so it is sufficient to prove it for $x\in(0,1)$.
We obtain the desired result by using~\eqref{FerrersQ}
and the hypergeometric representation of $Q_\nu^\mu(x)$ given in~\cite[Entry 33, p.~163]{MOS}.
However, it is much simpler to use~\eqref{FerrersQb} and the hypergeometric representation~\cite[Entry 15, p.~158]{MOS}
\begin{gather*}
P_\nu^\mu(z)={\frac{2^\mu\big(z+\sqrt{z^2-1}\big)^{\nu+\mu}}{\Ga(1-\mu)(z^2-1)^{\frac12\mu}}}
\F{-\nu-\mu,\frac12-\mu}{1-2\mu}{\frac{2\sqrt{z^2-1}}{z+\sqrt{z^2-1}}}\!.
\end{gather*}

In order to derive the result with the lower sign, use~\cite[Entry 34, p.~163]{MOS} or~\eqref{FerrersQe} and~\eqref{MOS32} in $\mathop{\rm Re} x>0$, $\mathop{\rm Im} x<0$, respectively.
This completes the proof.
\end{proof}

\begin{rem}
The hypergeometric representations of $\Q_\nu^\mu(x)$ given in the previous three theorems can be written
in slightly different forms by using Euler's transformation~\eqref{Euler}.
We observe that by applying Pfaff's transformations~\eqref{Pfaff1} and~\eqref{Pfaff2}, Theorem~\ref{tIII2} follows from Theorem~\ref{tIII1}.
Moreover, the result of Theorem~\ref{tIII3} with the lower sign follows from the same theorem with the upper sign.
\end{rem}

\begin{rem}
Theorem~\ref{tIII1} with the upper sign represents $\Q_\nu^\mu(x)$ in the upper half-plane as~a~li\-near combination of $Q_\nu^\mu(x)$ and $Q_{-\nu-1}^\mu(x)$ (cf.\ \eqref{FerrersQc}). In~particular, if $x\in(-1,1)$, $\Q_\nu^\mu(x)$ is given by a linear combination of $Q_\nu^\mu(x+{\rm i}0)$
and $Q_{-\nu-1}^\mu(x+{\rm i}0)$. The argument $w_{13}$, \mbox{$x\in(-1,1)$}, of
the hypergeometric functions lies on the line $\mathop{\rm Re} w=\frac12$.
Theorem~\ref{tIII1} with the lower sign represents~$\Q_\nu^\mu(x)$ in the lower half-plane as
a linear combination of $Q_\nu^\mu(x)$ and $Q_{-\nu-1}^\mu(x)$. In~particular, if~$x\in(-1,1)$, $\Q_\nu^\mu(x)$ is given by a linear combination of $Q_\nu^\mu(x-{\rm i}0)$
and $Q_{-\nu-1}^\mu(x-{\rm i}0)$. Again the argument $w_{17}$, $x\in(-1,1)$, of
the hypergeometric functions lies on the line $\mathop{\rm Re} w=\frac12$.
Similar results hold regarding Theorem~\ref{tIII2}. In~this case the argument
$w_{14}$, $x\in(-1,1)$, of~the hypergeometric functions lies on the unit circle $|w|=1$ excluding $w=1$.
Theorem~\ref{tIII3} represents $\Q_\nu^\mu(x)$ as a linear combination \eqref{connect1} of $\P_\nu^{-\mu}(x)$ and $\P_\nu^\mu(x)$ for
$x\in D_1^+$,
using an appropriate hypergeometric representation
of the Ferrers functions of the first kind. The argument $w_{15}$, $x\in(0,1)$, lies on the circle $|w-1|=1$ excluding $w=2$.
The range of values of $w_j$, $j=13,\dots,18$, $x\in(-1,1)$, is depicted in Figure~\ref{fig1}.
\end{rem}

\begin{figure}[htb!]\centering
\includegraphics[scale=.95]{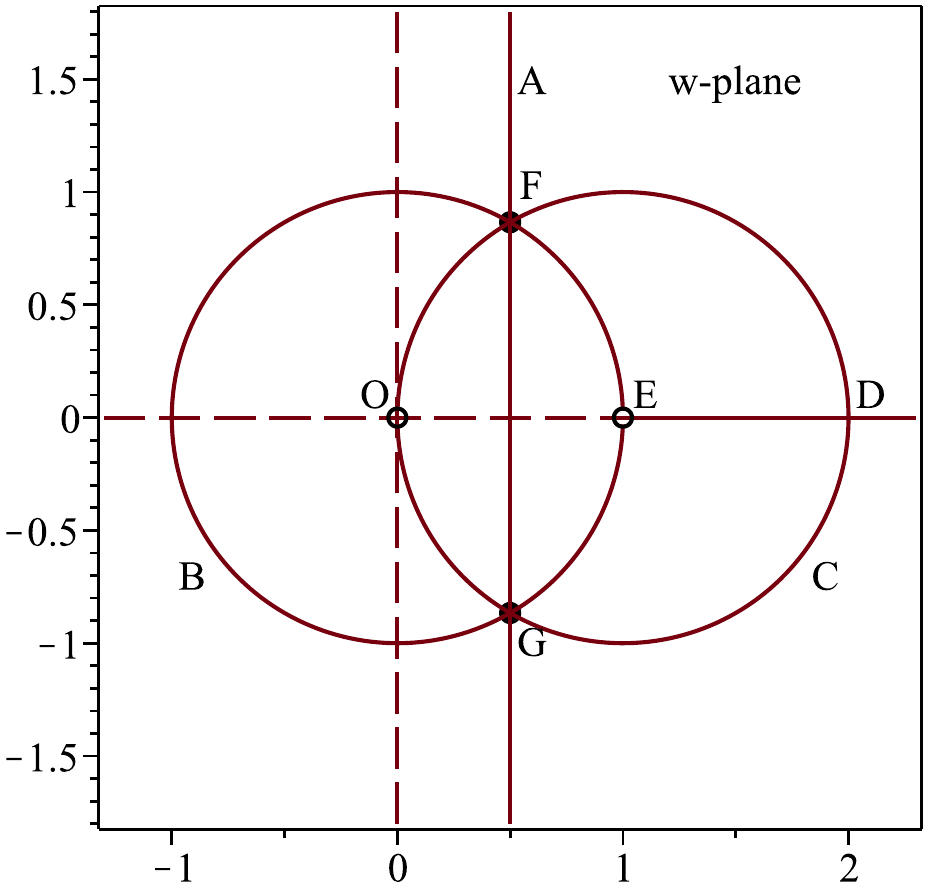}
\caption{This figure represents a plot in~the complex plane of the complex arguments
which appear in~Theorems~\ref{tIII1},~\ref{tIII2} and~\ref{tIII3} when $x\in(-1,1)$.
The symbol $O$ represents the origin of the complex $w$-plane, with the dashed horizontal
and vertical lines emanating from it being the real and imaginary axes respectively.
The symbol $A$ represents the line parallel to the imaginary
axis $w=\frac12\pm \frac{\rm i}{2}\cot\theta$ in~Theorem~\ref{tIII1}, where
$\theta\in(0,\pi)$.
The symbol $B$ represents the unit circle centered at~the origin~excluding
the point located at~$w=1$ in~Theorem~\ref{tIII2}.
The symbol $C$ represents the unit circle centered at~the point $w=1$ excluding
the point located at~$w=0$ in~Theorem~\ref{tIII3}.
The point $w=2$ ($D$) corresponding to $x=0$ is excluded in~Theorem~\ref{tIII3}.
The symbol $E$ represents the branch cut of the hypergeometric function
located at~$w\in[1,\infty)$.
The symbols $F$ and $G$ represent the two points where all three curves intersect
each other at~$w=\frac12\pm \frac{\sqrt{3}}{2}{\rm i}$ respectively.}\label{fig1}
\end{figure}

\section{{A} Fourier series representation}\label{sec6}

A Fourier series for a function
$f(x)$ is given by the infinite
sum
\begin{gather*}
f(x)=\sum_{n=-\infty}^\infty a_n \expe^{{\rm i}n\theta},
\end{gather*}
where the coefficients $a_n$ must
satisfy certain specific conditions to
guarantee convergence, see below. In~this section we perform
analysis for a Fourier series
representation of the Ferrers function
of the second kind.

\begin{thm}\label{Fourier1}
Let $x\in D_1$ and $\nu,\mu\in\C$ such that $\nu+\mu\not\in -\N$. Then
\begin{gather*}
\Q_\nu^\mu(x)= \sqrt\pi\, 2^{\mu-1}\big(1-x^2\big)^{\frac12\mu} \frac{\Ga(\nu+\mu+1)}{\Ga\big(\nu+\frac32\big)}\nonumber
\\ \hphantom{\Q_\nu^\mu(x)=}
{}\times\Bigg[u^{\nu+\mu+1}\F{\mu+\frac12,\nu+\mu+1}{\nu+\frac32}{\frac{u}{v}}
+v^{\nu+\mu+1}\F{\mu+\frac12,\nu+\mu+1}{\nu+\frac32}{\frac{v}{u}}\!\Bigg],
\end{gather*}
where
\begin{gather*}
u := x+{\rm i}\sqrt{1-x^2},\qquad v := x-{\rm i}\sqrt{1-x^2}=\frac1u.
\end{gather*}
\end{thm}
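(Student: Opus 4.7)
My plan is to reduce Theorem~\ref{Fourier1} to Theorem~\ref{tIII1} via Pfaff's transformation. Taking $(a,b,c)=(\mu+\tfrac12,\nu+\mu+1,\nu+\tfrac32)$ in~\eqref{Pfaff1} gives $c-b=\tfrac12-\mu$, while the transformed argument $w/(w-1)$ equals $u/(u-v)=w_{17}$ when $w=u/v$, and $v/(v-u)=w_{13}$ when $w=v/u$ (using the notation of Theorem~\ref{tIII1}). Using $uv=1$ to reduce $u^{\nu+\mu+1}v^{\mu+\frac12}$ to $u^{\nu+\frac12}$ (and symmetrically for the other term), and absorbing the common factor $(u-v)^{-\mu-\frac12}=(2\mathrm{i}\sqrt{1-x^2})^{-\mu-\frac12}$ into the prefactor $(1-x^2)^{\mu/2}$, the right-hand side of Theorem~\ref{Fourier1} assumes (up to branch-cut phase factors of the form $\expe^{\pm\mathrm{i}\pi(\mu+\frac12)/2}$) the form of the arithmetic mean of the \emph{second terms} of Theorem~\ref{tIII1}'s upper- and lower-sign versions, \emph{except} that the coefficients $1+\expe^{\pm\mathrm{i}\pi(\nu+\mu)}\cos(\pi\mu)/\cos(\pi\nu)$ appearing in Theorem~\ref{tIII1} have been replaced by $1$.

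The proof therefore reduces to showing that the missing $\expe^{\pm\mathrm{i}\pi(\nu+\mu)}\cos(\pi\mu)/\cos(\pi\nu)$ contributions are cancelled exactly by the arithmetic mean of the \emph{first terms} of Theorem~\ref{tIII1}. Using the trigonometric identity $1-\expe^{\pm\mathrm{i}\pi(\nu+\mu)}\cos(\pi\mu)/\cos(\pi\nu)=\mp\mathrm{i}\expe^{\pm\mathrm{i}\pi\mu}\sin(\pi(\nu+\mu))/\cos(\pi\nu)$ together with the reflection formula~\eqref{Eulerreflect} to rewrite the gamma ratio $\Ga(\nu+\tfrac12)/\Ga(\nu-\mu+1)$, this cancellation reduces to a Kummer-type connection between the two hypergeometric functions $F(\mu+\tfrac12,\tfrac12-\mu;\tfrac12-\nu;w_j)$ and $F(\mu+\tfrac12,\tfrac12-\mu;\nu+\tfrac32;w_j)$ at the common arguments $w_j\in\{w_{13},w_{17}\}$. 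These hypergeometric functions share the first two parameters, while the third parameters are related by the Legendre involution $\nu\mapsto-\nu-1$, so both are linearly independent solutions of a common second-order hypergeometric ODE.

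The main obstacle is the verification of this last connection identity in the parameter-degenerate case $c=1+b-a$ (which both triples satisfy): it is derived by combining Kummer's connection formula around $z=\infty$ with Euler's transformation~\eqref{Euler}, carefully matching the resulting gamma-function ratios via~\eqref{Eulerreflect}. Once this identity is in hand, all remaining trigonometric and gamma-ratio simplifications are elementary. By analyticity of both sides on $D_1$, the formula then holds for all $x\in D_1$; and a limiting argument in $\nu$ removes the spurious restriction $\nu+\tfrac12\notin\Z$ assumed in Theorem~\ref{tIII1} but dropped in Theorem~\ref{Fourier1}.
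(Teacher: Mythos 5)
Your reduction is set up correctly in its first step: applying \eqref{Pfaff1} with $(a,b,c)=\big(\mu+\tfrac12,\nu+\mu+1,\nu+\tfrac32\big)$, absorbing $\big({\mp}2{\rm i}\sqrt{1-x^2}\big)^{-\mu-\frac12}$ into the prefactor and using $uv=1$, the right-hand side of Theorem~\ref{Fourier1} does become the \emph{sum} (not the arithmetic mean) of the second bracketed terms of the upper- and lower-sign versions of Theorem~\ref{tIII1} with the coefficients $1+\expe^{\pm{\rm i}\pi(\nu+\mu)}\cos(\pi\mu)/\cos(\pi\nu)$ replaced by $1$. But the step that carries the entire weight of the proof --- the cancellation identity --- is only asserted, and the roadmap you give for it points at the wrong formula. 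After Pfaff the parameter triple is $\big(\mu+\tfrac12,\tfrac12-\mu,\nu+\tfrac32\big)$, for which $c\ne 1+b-a$ in general, so there is no quadratic-transformation degeneracy to exploit; and the two functions $\F{\mu+\frac12,\frac12-\mu}{\nu+\frac32}{w}$ and $\F{\mu+\frac12,\frac12-\mu}{\frac12-\nu}{w}$ at a \emph{single} common argument correspond (up to elementary factors) to $Q_\nu^\mu$ and $Q_{-\nu-1}^\mu$, which are linearly independent, so no two-term relation between them at one argument can exist. Writing $A_\pm$, $B_\pm$ for the first and second bracketed terms of Theorem~\ref{tIII1} and $\tilde B_\pm$ for $B_\pm$ stripped of the factor $1+\expe^{\pm{\rm i}\pi(\nu+\mu)}\cos(\pi\mu)/\cos(\pi\nu)$, what you must prove is $\tilde B_-=A_++\expe^{{\rm i}\pi(\nu+\mu)}\frac{\cos(\pi\mu)}{\cos(\pi\nu)}\tilde B_+$, a genuine three-term relation mixing the two arguments. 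Since $w_{13}+w_{17}=1$, this is an instance of the Kummer connection formula between $z$ and $1-z$ (DLMF~15.8.4, followed by Euler's transformation \eqref{Euler} on the $(1-z)^{c-a-b}$ term), \emph{not} of the connection around $z=\infty$, which would relate $w_{13}$ to $1/w_{13}=w_{16}$. Verifying that the coefficients $\Ga(c)\Ga(c-a-b)/(\Ga(c-a)\Ga(c-b))$ and $\Ga(c)\Ga(a+b-c)/(\Ga(a)\Ga(b))$, with $c-a-b=\nu+\tfrac12$, reproduce $\Ga\big(\nu+\tfrac12\big)/\Ga(\nu-\mu+1)$, the factor $\expe^{{\rm i}\pi(\nu+\mu)}\cos(\pi\mu)/\cos(\pi\nu)$, and the phases $\expe^{\pm\frac{{\rm i}\pi}{2}(\mu+\frac12)}$ is exactly the content you have omitted, and it is where all the branch-cut and reflection-formula bookkeeping lives.

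You should also know that the paper's own proof bypasses all of this. Both sides are analytic on $D_1$, so it suffices to take $x\in(-1,1)$; there one substitutes the single representation \eqref{MOS32} for $Q_\nu^\mu(x+{\rm i}0)$ and $Q_\nu^\mu(x-{\rm i}0)$ into the defining average \eqref{FerrersQ}. Since $\sqrt{(x\pm{\rm i}0)^2-1}=\pm{\rm i}\sqrt{1-x^2}$, the two boundary values produce the two terms in $u$ and $v$ verbatim, and the phases $\expe^{\mp\frac12{\rm i}\pi\mu}$ in \eqref{FerrersQ} exactly neutralize $\big((x\pm{\rm i}0)^2-1\big)^{\frac12\mu}=\expe^{\pm\frac12{\rm i}\pi\mu}\big(1-x^2\big)^{\frac12\mu}$. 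No connection formula is needed, and no limiting argument in $\nu$ either, since \eqref{MOS32} carries no restriction $\nu+\tfrac12\notin\Z$. Your route is likely salvageable, but as written it defers the proof to an unestablished identity and misidentifies the tool required to establish it.
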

\begin{proof}
Both sides of the stated identity are analytic functions on $D_1$, so it is sufficient to prove it for $x\in(-1,1)$.
If $x\in(-1,1)$ we use \eqref{FerrersQ} with $Q_\nu^\mu(x+{\rm i}0)$ and $Q_\nu^\mu (x-{\rm i}0)$ both
expressed through \eqref{MOS32}. Then the desired identity follows noting that $\sqrt{(x\pm {\rm i}0)^2-1}=\pm {\rm i}\sqrt{1-x^2}$.
\end{proof}

If we set $x=\cos\theta$, {$\mathop{\rm Re}\theta\in(0,\pi)$},
then Theorem~\ref{Fourier1} gives
\begin{gather*}
\Q_\nu^\mu(\cos\theta)= \sqrt\pi\, 2^{\mu-1}(\sin\theta)^\mu\frac{\Ga(\nu+\mu+1)}{\Ga\big(\nu+\frac32\big)}
\\ \qquad
{}\times\Bigg[\expe^{{\rm i}(\nu\!+\mu\!+1)\theta}\F{\mu\!+\frac12,\nu\!+\mu\!+1}{\nu+\frac32}{\expe^{2{\rm i}\theta}}
\!+\expe^{-{\rm i}(\nu+\mu+1)\theta}\F{\mu+\frac12,\nu+\mu+1}{\nu+\frac32}{\expe^{-2{\rm i}\theta}}\!\Bigg].
\end{gather*}
If $\theta\in(0,\pi)$ then the arguments $w=\expe^{\pm 2{\rm i}\theta}$ of the hypergeometric function lie on the unit circle $|w|=1$.
Provided the hypergeometric series converges at $\expe^{\pm 2{\rm i}\theta}$ we obtain (using Abel's theorem on power series)~\cite[equation~(14.13.2)]{NIST:DLMF}
\begin{gather}\label{Fourier}
\Q_\nu^\mu(\cos\theta)= \sqrt\pi\,2^\mu(\sin\theta)^\mu\sum_{k=0}^\infty \frac{\Ga(\nu+\mu+k+1)}{\Ga\big(\nu+k+\frac32\big)}
\frac{\big(\mu+\frac12\big)_k}{k!} \cos((\nu+\mu+2k+1)\theta) .
\end{gather}
Regarding the convergence of the series in~\eqref{Fourier} we have the following result. Statement $(b)$ of~Theo\-rem~\ref{Fourierconvergence} is more precise than the corresponding statement in~\cite[Section~14.13]{NIST:DLMF}.

\begin{thm}\label{Fourierconvergence}
Let $\theta\in(0,\pi)$, $\nu,\mu\in\C$ such that $\nu+\mu\in\C\setminus-\N$.
\begin{enumerate}\itemsep=0pt

\item[$(a)$] If $\mathop{\rm Re}\mu<0$ then the series in \eqref{Fourier} converges absolutely.

\item[$(b)$] If $0\le \mathop{\rm Re} \mu<\frac12$ then the series in \eqref{Fourier} converges, but, if $\theta\ne\frac12\pi$,
it does not converge absolutely.

\item[$(c)$] If $\mathop{\rm Re}\mu\ge \frac12$ and $\theta\ne \frac12\pi${,} then the series in \eqref{Fourier} diverges.
\end{enumerate}
\end{thm}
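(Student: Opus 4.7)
The plan is to derive all three statements from the Stirling asymptotic of the summand coefficients together with the classical convergence theory for the Gauss hypergeometric series on the unit circle, already implicitly set up in Theorem~\ref{Fourier1} and the derivation of~\eqref{Fourier}.

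First I would extract the asymptotic behaviour of
\begin{gather*}
c_k:=\frac{\Ga(\nu+\mu+k+1)}{\Ga\bigl(\nu+k+\tfrac32\bigr)}\frac{(\mu+\tfrac12)_k}{k!}.
\end{gather*}
Stirling's formula (or the standard ratio-of-Pochhammers asymptotic) gives $c_k\sim K\,k^{2\mu-1}$ with $K=\Ga(\nu+\tfrac32)/[\Ga(\mu+\tfrac12)\Ga(\nu+\mu+1)]$ nonzero, hence $|c_k|\sim|K|\,k^{2\mathop{\rm Re}\mu-1}$. A short computation also shows $c_{k+1}/c_k=1+(2\mu-1)/k+O(k^{-2})$, so $|c_{k+1}-c_k|=O(k^{2\mathop{\rm Re}\mu-2})$, and in particular $\sum_k|c_{k+1}-c_k|<\infty$ whenever $\mathop{\rm Re}\mu<\tfrac12$.

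For part $(a)$: when $\mathop{\rm Re}\mu<0$ we have $2\mathop{\rm Re}\mu-1<-1$, so $\sum|c_k|<\infty$; since $|\cos((\nu+\mu+2k+1)\theta)|\le e^{|\theta||\mathop{\rm Im}(\nu+\mu)|}$ is bounded in $k$ (the imaginary part of the argument is constant in $k$), absolute convergence follows. For part $(b)$ convergence, I would invoke the representation of Theorem~\ref{Fourier1}: the Fourier series is the expansion of the two hypergeometric series ${}_2F_1(\mu+\tfrac12,\nu+\mu+1;\nu+\tfrac32;e^{\pm2\mathrm{i}\theta})$, whose boundary parameter is $c-a-b=-2\mu$. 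Since $\theta\in(0,\pi)$ forces $e^{\pm2\mathrm{i}\theta}\ne1$, the classical result (equivalently Dirichlet's test applied to the sequence $c_k$, using $c_k\to0$, $\sum|c_{k+1}-c_k|<\infty$, and bounded partial sums of $\{e^{\pm2\mathrm{i}k\theta}\}$) yields convergence of each hypergeometric series and hence of~\eqref{Fourier}.

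For the remaining (harder) assertions in $(b)$ and $(c)$ I need to show that the cosine factor cannot conspire with the coefficients to accelerate cancellation. If $\mathop{\rm Im}(\nu+\mu)\ne0$, the identity $|\cos(x+\mathrm{i}y)|^2=\cos^2x+\sinh^2 y$ gives $|\cos((\nu+\mu+2k+1)\theta)|\ge|\sinh(\theta\mathop{\rm Im}(\nu+\mu))|>0$ uniformly in $k$, so the cosine factor drops out of divergence considerations. If $\nu+\mu$ is real, I would show that for $\theta\in(0,\pi)\setminus\{\pi/2\}$ the set $\{k:|\cos((\nu+\mu+2k+1)\theta)|\ge\tfrac12\}$ has positive lower density: for $\theta/\pi$ irrational this is Weyl equidistribution, and for $\theta/\pi$ rational with $\theta\ne\pi/2$ a direct periodic-orbit count shows that the cosine vanishes with density strictly less than $1$ (the value $\theta=\pi/2$ is the unique exception in $(0,\pi)$ where $(2k+1)\theta\equiv\pi/2\pmod\pi$ for every $k$, which is precisely why it must be excluded). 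Combined with $|c_k|\sim|K|\,k^{2\mathop{\rm Re}\mu-1}$ slowly varying, this delivers $\sum|c_k\cos((\nu+\mu+2k+1)\theta)|=\infty$ when $\mathop{\rm Re}\mu\ge0$, giving the non-absolute statement in~$(b)$, and shows that the general term does not tend to zero when $\mathop{\rm Re}\mu\ge\tfrac12$, giving divergence in~$(c)$.

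The routine pieces are Stirling and the Dirichlet-test bookkeeping; the main obstacle is the density argument for the cosine factor in the real case of $\nu+\mu$, where one has to split into the irrational and rational regimes of $\theta/\pi$ and confirm that only the excluded value $\theta=\pi/2$ produces a sequence of zeros of density one.
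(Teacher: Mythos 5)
Your proposal is correct; for part $(a)$ and the convergence half of $(b)$ it essentially coincides with the paper's argument (coefficient asymptotics $c_k\asymp k^{2\mathop{\rm Re}\mu-1}$ together with the classical behaviour of the Gauss series on $|w|=1$, i.e., absolute convergence for $\mathop{\rm Re}(c-a-b)>0$ and conditional convergence at $w\ne1$ for $-1<\mathop{\rm Re}(c-a-b)\le0$, equivalently Dirichlet's test). Where you genuinely diverge from the paper is in the non-absolute statement of $(b)$ and in $(c)$. The paper needs no equidistribution at all: Lemma~\ref{l3} reduces to real $\nu+\mu$ via $|\cos z|\ge|\cos(\mathop{\rm Re}z)|$ and then uses the elementary inequality $|\cos t|\ge\cos^2t=\tfrac12+\tfrac12\cos(2t)$, so that $\sum k^{-1}|\cos((a+2k)\theta)|$ splits into the divergent harmonic series plus a series convergent by Dirichlet's test whenever $\theta\ne\tfrac12\pi$; and for $(c)$ Lemma~\ref{l4} shows from the addition formula that $\cos(a+2k\theta)$ cannot tend to $0$ when $\sin(2\theta)\ne0$, so the terms of \eqref{Fourier} do not tend to zero. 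Your route --- the $|{\sinh}(\theta\mathop{\rm Im}(\nu+\mu))|$ lower bound when $\nu+\mu$ is nonreal, and in the real case a positive lower density for $\{k\colon|\cos((\nu+\mu+2k+1)\theta)|\ge\tfrac12\}$ via Weyl equidistribution for $\theta/\pi$ irrational and a periodic-orbit count for $\theta/\pi$ rational --- is sound and treats both halves uniformly, but it is heavier machinery than the paper's two one-line lemmas, and the rational case should be phrased as ``the finite orbit of $(\nu+\mu+2k+1)\theta$ modulo $\pi$ has gap at most $\tfrac12\pi$ once $2\theta\not\equiv0 \pmod\pi$, hence meets the arc $\{|\cos\phi|\ge\tfrac12\}$ of length $\tfrac23\pi$ with positive frequency'': merely saying the cosine ``vanishes with density strictly less than $1$'' is weaker than what your divergence estimate uses. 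One small correction: the asymptotic constant is $K=1/\Ga\big(\mu+\tfrac12\big)$, not $\Ga\big(\nu+\tfrac32\big)/\big[\Ga\big(\mu+\tfrac12\big)\Ga(\nu+\mu+1)\big]$; this is harmless since only $K\ne0$ matters, and the exceptional values $\mu\in\big\{{-}\tfrac12,-\tfrac32,\dots\big\}$, where the Pochhammer symbol makes the series terminate, fall inside case $(a)$ anyway.
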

\begin{proof}
$(a)$ It is known~\cite[Section~15.2(i)]{NIST:DLMF} that the Gauss hypergeometric series $\F{a,b}{c}{w}$ converges absolutely on the unit circle $|w|=1$ if $\mathop{\rm Re}(c-a-b)>0$. In~our case $a=\mu+\frac12$, $b=\nu+\mu+1$, $c=\nu+\frac32$ so
$c-a-b=-2\mu$. If $\mathop{\rm Re}\mu<0$ it follows that the series in \eqref{Fourier} is the sum of two absolutely convergent series and so is
itself absolutely convergent.

\looseness=-1
$(b)$ Suppose that $0\le\mathop{\rm Re}\mu<\frac12$.
If $-1<\mathop{\rm Re}(c-a-b)\le 0$ then the Gauss hypergeometric series converges conditionally at $|w|=1$, $w\ne 1$~\cite[Section~15.2(i)]{NIST:DLMF}.
It follows that the series in \eqref{Fourier} is the sum of two convergent series and so is itself convergent.
However, it is not true that the sum of two conditionally convergent series is conditionally convergent.
We still have to show that the series in \eqref{Fourier} does not converge absolutely if $\theta\ne \frac12\pi$.
According to~\cite[equation~(5.11.12)]{NIST:DLMF},
\begin{gather*}
\frac{\Ga(a+z)}{\Ga(b+z)}\sim z^{a-b}
\end{gather*}
as $z\to+\infty$.
Therefore, as $k\to\infty$,
\begin{gather*}
\frac{\Ga(\nu\!+\mu\!+k\!+1)}{\Ga\big(\nu+k+\frac32\big)}\frac{\big(\mu\!+\!\frac12\big)_k}{k!}
= \frac{\Ga(\nu\!+\mu\!+k\!+\!1)}{\Ga\big(\nu+k+\frac32\big)}
\frac{\Ga\big(\mu\!+\frac12\!+k\big)}{\Ga(k+1)\Ga\big(\mu+\frac12\big)}
\sim \frac{k^{\,\mu-\frac12} k^{\,\mu-\frac12}}{\Ga\big(\mu+\frac12\big)}=\frac{k^{2\mu-1}}{\Ga\big(\mu+\frac12\big)} .
\end{gather*}
Since $1/\Ga\big(\mu+\frac12\big)\ne 0$, there are positive constants $\kappa$ and $K$ such that
\begin{gather*} \left|\frac{\Ga(\nu+\mu+k+1)}{\Ga\big(\nu+k+\frac32\big)}\frac{\big(\mu+\frac12\big)_k}{k!}\right|\ge
\frac{\kappa}{k},
\end{gather*}
for $k\ge K$.
The second part of statement $(b)$ now follows from Lemma~\ref{l3}.

$(c)$ Suppose that $\mathop{\rm Re}\mu\ge \frac12$ and the series in \eqref{Fourier} converges. Then the terms of the series must converge to $0$.
It follows that $\cos((\nu+\mu+2k+1)\theta)\to 0$ as $k\to\infty$. By Lemma~\ref{l4} this is impossible unless $\theta=\frac12\pi$.
Therefore, the series in \eqref{Fourier} will diverge for $\theta\ne \frac12\pi$.
\end{proof}

\begin{Lemma}\label{l3}
Let $a\in \C$ and
$\theta\in(0,\pi)$.
Then
\begin{gather*}
\sum_{k=1}^\infty \frac1k |\cos((a+2k)\theta)|=\infty
\end{gather*}
unless $\cos a=0$ and $\theta=\frac12\pi$.
\end{Lemma}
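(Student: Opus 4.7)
The plan is to rewrite the summand in exponential form and then split according to whether $\theta/\pi$ is irrational or rational. Setting $A := \expe^{{\rm i}a\theta}$, $B := \expe^{-{\rm i}a\theta}$ (both nonzero) and $\zeta := \expe^{2{\rm i}\theta}$, which lies on the unit circle because $\theta$ is real, one has $2\cos((a+2k)\theta) = A\zeta^{k} + B\zeta^{-k}$, and since $|\zeta^{-k}|=1$ this yields
\[
2\,|\cos((a+2k)\theta)| = \bigl|A\zeta^{2k} + B\bigr|.
\]

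\emph{Case 1: $\theta/\pi$ irrational.} Then $2\theta/\pi$ is irrational as well, so by Weyl's equidistribution theorem the sequence $(\zeta^{2k})_{k\ge 1}$ is equidistributed on the unit circle. Since $A, B$ are nonzero, fixing a small $\epsilon > 0$ the points $\zeta^{2k}$ avoid the $\epsilon$-neighborhood of $-B/A$ on a set $S \subset \N$ of positive lower density, which gives a constant $c > 0$ with $|\cos((a+2k)\theta)| \ge c$ for $k \in S$. A standard lower bound on harmonic sums restricted to positive-density subsets then shows $\sum_{k \in S} 1/k = \infty$, from which the desired divergence follows.

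\emph{Case 2: $\theta = p\pi/q$ rational in lowest terms.} Then $\zeta^{2k}$ is periodic in $k$ with some period $N \mid q$, so $|\cos((a+2k)\theta)|$ is likewise periodic. Decomposing the sum into $N$ arithmetic progressions produces a finite sum of terms of the form $C_r \sum_{j \ge 0} 1/(r + jN)$ with $C_r := |\cos((a+2r)\theta)| \ge 0$ constant. Since all summands are non-negative and each tail $\sum_j 1/(r+jN)$ diverges, the full series is finite if and only if $C_r = 0$ for every $r$, equivalently $\cos((a+2k)\theta) = 0$ for every $k \in \N$. Subtracting this identity for two consecutive values of $k$ forces $2\theta \in \pi\Z$, so $\theta = \pi/2$; feeding this back in then pins down the congruence on $a$ that produces the lemma's exceptional configuration.

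The main obstacle is the equidistribution step in the irrational case: specifically, extracting from Weyl's theorem a positive-density set on which the summand is bounded below by a positive constant, and then invoking that the harmonic series restricted to such a set still diverges. The rational case is essentially routine periodicity bookkeeping, and the exceptional configuration then falls out of a short final computation with $\theta = \pi/2$ substituted into $\cos((a+2k)\theta) = 0$.
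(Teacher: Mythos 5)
Your argument is correct in outline (each step you flag---Weyl equidistribution, the divergence of $\sum_{k\in S}1/k$ over a set $S$ of positive lower density, and the periodicity bookkeeping in the rational case---is standard and fillable), but it takes a genuinely different route from the paper, which needs no case split on whether $\theta/\pi$ is rational. The paper first reduces to real $a$ via $|\cos z|\ge |\cos(\mathop{\rm Re} z)|$, then bounds $|\cos((a+2k)\theta)|\ge \cos^2((a+2k)\theta)=\tfrac12+\tfrac12\cos(2(a+2k)\theta)$; the series $\sum 1/(2k)$ diverges, while $\sum \cos(2(a+2k)\theta)/(2k)$ converges by the Dirichlet test because the partial sums $\sum_{k=1}^n\cos(2(a+2k)\theta)$ are bounded (a geometric sum with ratio $\expe^{4{\rm i}\theta}\ne 1$ precisely when $\theta\ne\frac12\pi$). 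That single estimate does the work of both of your cases at once and avoids equidistribution entirely, which is the main economy of the paper's proof; your rewriting $2|\cos((a+2k)\theta)|=\big|A\zeta^{2k}+B\big|$ is a clean way to handle complex $a$ (if $a\notin\R$ then $|A|\ne|B|$ and the terms are even bounded below uniformly), playing the same role as the paper's inequality $|\cos z|\ge|\cos(\mathop{\rm Re} z)|$. What your route buys in return is an exact description of the exceptional configuration: your Case 2 shows that convergence forces $\theta=\frac12\pi$ together with $\cos(a\theta)=0$, i.e., $a$ an odd integer---which is in fact the correct condition; the lemma's literal ``$\cos a=0$'' is a slip (take $a=1$, $\theta=\frac12\pi$: every term vanishes), though this discrepancy is harmless for the application in Theorem~\ref{Fourierconvergence}, which only invokes the lemma when $\theta\ne\frac12\pi$.
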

\begin{proof}
If $\theta=\frac12\pi$ then $|\cos((a+2k)\theta)|$ is independent of $k$ and the assertion follows.
Now suppose that $\theta\ne \frac12\pi$.
Since $|\cos z|\ge |\cos(\mathop{\rm Re} z)|$ for all $z\in\C$ it is enough to consider $a\in\R$.
Then $|\cos((a+2k)\theta)|\ge \cos^2((a+2k)\theta)$.
Therefore, it is enough to show that
\begin{gather*}
\sum_{k=1}^\infty \frac1k \cos^2((a+2k)\theta) =\infty .
\end{gather*}
Now
\begin{gather*}
\frac1k\cos^2((a+2k)\theta) =\frac1{2k}+\frac1{2k} \cos(2(a+2k)\theta) .
\end{gather*}
The series $\sum_{k=1}^\infty 1/(2k)$ diverges. The series
$\sum_{k=1}^\infty\cos(2(a+2k)\theta)/(2k)$ converges by the Dirichlet test
provided the partial sums $\sum_{k=1}^n \cos(2(a+2k)\theta)$ form a bounded sequence.
This is true for~$\theta\in(0,\pi)$, $\theta\ne \frac12\pi$.
\end{proof}

\begin{Lemma}\label{l4}
Let $a,b\in\C$ and $\sin b\ne 0$. Then the sequence $\cos(a+bn)$ does not converge to $0$ as $n\to\infty$.
\end{Lemma}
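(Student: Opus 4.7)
My plan is to argue by contradiction: assume $\cos(a+bn)\to 0$ and use a trigonometric identity to force $\sin(a+bn)\to 0$ as well, which contradicts the Pythagorean identity $\cos^2 z+\sin^2 z=1$ (an algebraic identity valid for all $z\in\C$).

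The key step is the difference formula
\begin{gather*}
\cos(a+b(n+1))-\cos(a+b(n-1))=-2\sin(b)\sin(a+bn),
\end{gather*}
which holds for complex arguments since it is an identity in $\C$. Under the assumption $\cos(a+bn)\to 0$, the left-hand side, being the difference of two subsequences of $\cos(a+bn)$, tends to $0$. Since $\sin b\ne 0$, we can divide to conclude $\sin(a+bn)\to 0$.

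Then combining with the hypothesis gives $\cos^2(a+bn)+\sin^2(a+bn)\to 0$; but this quantity equals $1$ for every $n$, yielding the desired contradiction. No obstacle is expected: the only point to be careful about is that all identities used are purely algebraic in $\C$, so they remain valid for complex $a,b$, and convergence of $\cos(a+bn)$ to $0$ automatically yields convergence of the shifted sequences $\cos(a+b(n\pm 1))$ to $0$.
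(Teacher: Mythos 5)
Your proof is correct and follows essentially the same route as the paper: the paper uses the addition formula $\cos(a+b(n+1))=\cos(a+bn)\cos b-\sin(a+bn)\sin b$ to force $\sin(a+bn)\to 0$, while you use the equivalent difference identity, and both conclude by contradicting $\cos^2 z+\sin^2 z=1$. No issues.
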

\begin{proof}
Suppose that $\cos(a+bn)\to 0$ as $n\to\infty$.
We have
\begin{gather*}
\cos(a+b(n+1))=\cos(a+bn)\cos b-\sin(a+bn)\sin b.
\end{gather*}
If we let $n\to\infty$ and use $\sin b\ne 0$ we obtain that $\sin(a+bn)\to 0$. Since $\cos^2x+\sin^2x=1$ this is a contradiction.
\end{proof}

\section{{Convergence regions of the} Gauss hypergeometric series}\label{sec7}

In the hypergeometric representations of $\Q_\nu^\mu(x)$ derived in Sections \ref{sec3}--\ref{sec5} we worked with the principal value of
the hypergeometric function defined on $\C\setminus[1,\infty)$. If we wish to use the hypergeometric series~\eqref{Fseries}
we have to add the condition $|w_j|<1$.
Therefore, it is of interest to~determine the regions in the $x$-plane on which $|w_j|<1$. In~most cases these regions are obvious but not in all of them.

\subsection{Group I}
\begin{itemize}\itemsep=0pt
\item The hypergeometric functions in Theorem~\ref{tI1} have the argument {$w_1=(1-x)/2$}.
Now $|w_1| <1$ gives the condition $|1-x|<2$ describing a disk centered at $x=1$ with radius $2$.

\item The hypergeometric functions in Theorem~\ref{tI2} have the argument {$w_2=(1+x)/2$}.
Then $|w_2|<1$ is satisfied if and only if $|1+x|<2$.

\item The hypergeometric functions in Theorem~\ref{tI3} have the argument {$w_3=(x-1)/(x+1)$}.
Then $|w_3|<1$ is satisfied if and only if $\mathop{\rm Re} x>0$.

\item The hypergeometric functions in Theorem~\ref{tI4} have the argument {$w_4=(x+1)/(x-1)$}.
Then $|w_4|<1$ is satisfied if and only if $\mathop{\rm Re} x<0$.

\item The hypergeometric functions in Theorem~\ref{tI5} have the argument {$w_5=2/(1+x)$}.
Then $|w_5|<1$ is satisfied if and only if $|1+x|>2$.

\item The hypergeometric function in Theorem~\ref{tI6} have the argument {$w_6=2/(1-x)$}.
Then $|w_6|<1$ is satisfied if and only if $|1-x|>2$.
\end{itemize}

\subsection{Group II}
\begin{itemize}\itemsep=0pt
\item The hypergeometric functions in Theorem~\ref{tII1} have the argument $w_7=1-x^2$.
Then $|w_7|<1$ is satisfied if and only if $|1-x||1+x|<1$. This is the shaded region in Figure~\ref{fig2} bounded by a {lemniscate}.
\begin{figure}[ht]
 \centering
 \includegraphics[scale=.68]{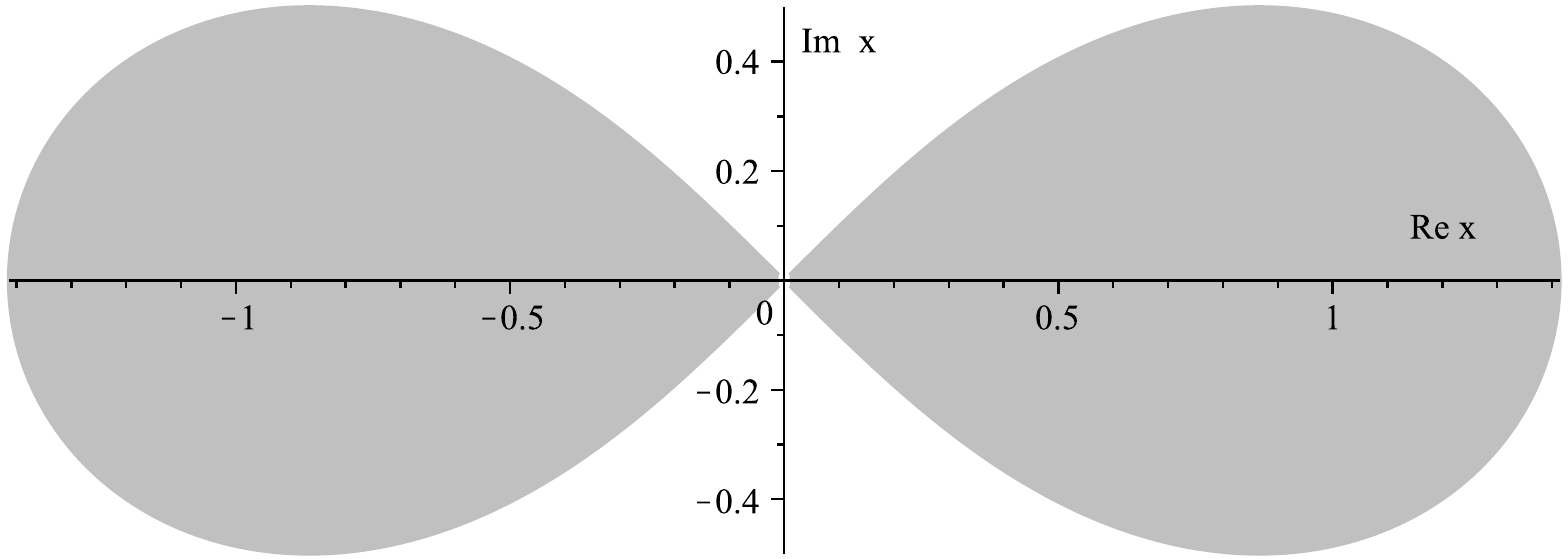}
 \caption{The region $|w_7|<1$.}\label{fig2}
\end{figure}

\item The hypergeometric functions in Theorem~\ref{tII2} have the argument {$w_8=1/\big(1-x^2\big)$}.
Then $|w_8|<1$ is satisfied if and only if $|1+x||1-x|>1$, that is, if $x$ lies in the unshaded region in Figure~\ref{fig2}.

\item The hypergeometric functions in Theorem~\ref{tII3} have the argument $w_9=x^2$.
Then $|w_9| <1$ is satisfied if and only if $|x|<1$.

\item The hypergeometric functions in Theorem~\ref{tII4} have the argument {$w_{10}=1/x^2$}.
Then $|w_{10}|<1$ is satisfied if and only if $|x|>1$.

\item The hypergeometric functions in Theorem~\ref{tII5} have the argument {$w_{11}=\big(x^2-1\big)/x^2$}.
Then $|w_{11}|<1$ is equivalent to $\mathop{\rm Re} x^2>\frac12$ or $(\mathop{\rm Re} x)^2-(\mathop{\rm Im} x)^2>\frac12$. This determines the shaded region
bounded by a hyperbola depicted in Figure~\ref{fig3}.

\begin{figure}[ht] \centering
 \includegraphics[scale=.63]{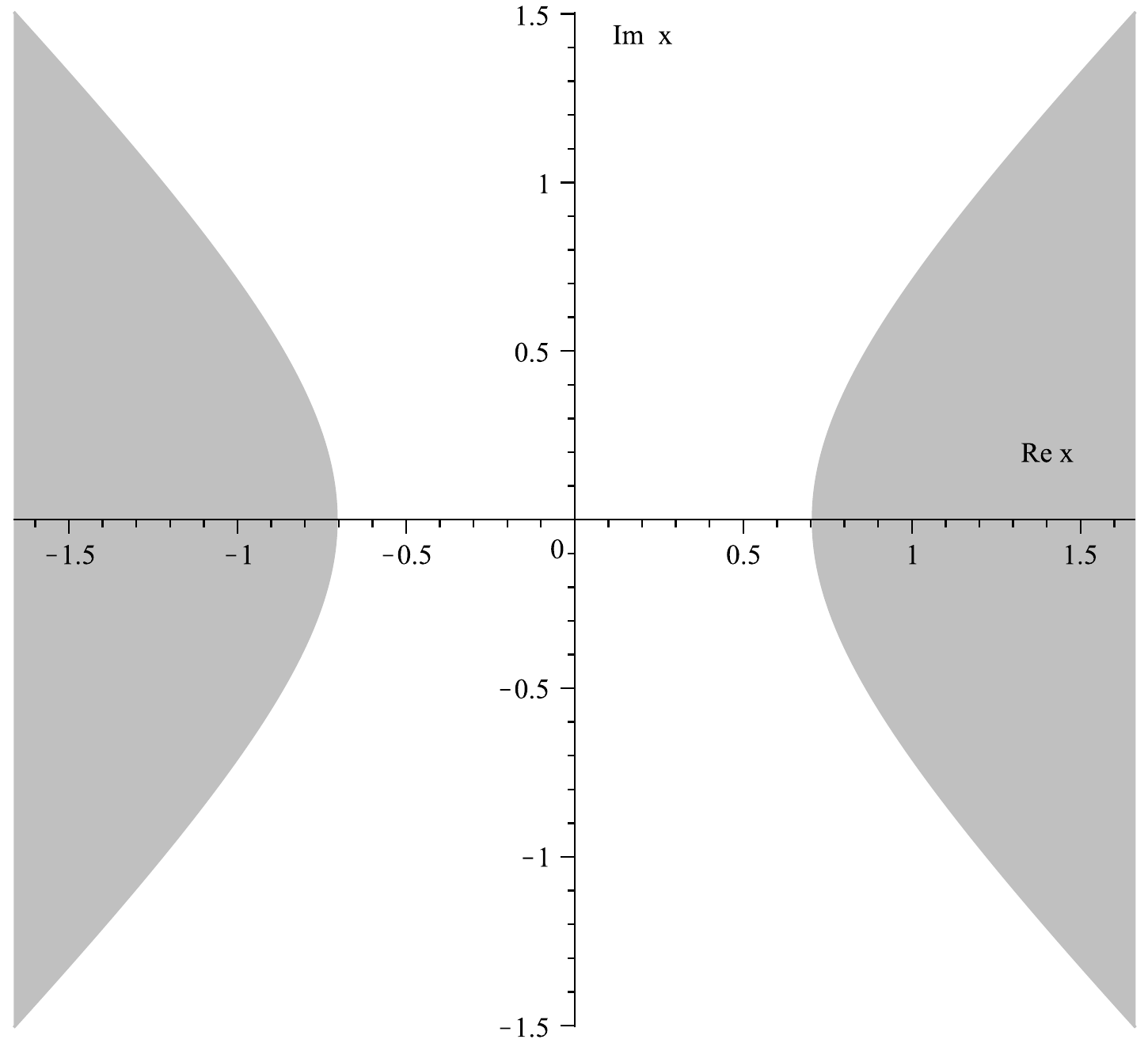}
 \caption{The region $|w_{11}|<1$.}\label{fig3}
\end{figure}

\item The hypergeometric functions in Theorem~\ref{tII6} have the argument {$w_{12}=x^2/\big(x^2-1\big)$}.
Then $|w_{12}|<1$ is satisfied if and only if $(\mathop{\rm Re} x)^2-(\mathop{\rm Im} x)^2<\frac12$. This determines the unshaded region in the $x$-plane
in Figure~\ref{fig3}.
\end{itemize}

\subsection{Group III}
For abbreviation, let
\begin{gather*}
y={\rm i}\sqrt{1-x^2}
\end{gather*}
for $x\in D_1$.
\begin{itemize}
\item The hypergeometric functions in Theorem~\ref{tIII1} (upper sign) have the argument
\begin{gather*}
w_{13}=\frac{-x+y}{2y}= \frac{w_{14}}{w_{14}-1},
\qquad \text{where}\quad
w_{14}=\frac{x-y}{x+y} .
\end{gather*}
Therefore, $|w_{13}|<1$ is equivalent to $\mathop{\rm Re} w_{14}<\frac12$. Let $x\in D_1$. Then $x=\cos\theta$, $\theta=\alpha+ {\rm i}\beta$, where $\alpha\in(0,\pi)$ and $\beta\in\R$, so
\begin{gather*}
w_{14}=\frac{\cos\theta-{\rm i}\sin\theta}{\cos\theta+{\rm i}\sin\theta}= \expe^{-2{\rm i}\theta}= \expe^{-2{\rm i}\alpha} \expe^{2\beta} .
\end{gather*}
Hence, $|w_{13}|{<}1$ is equivalent to
${\expe^{2\beta}\cos(2\alpha){<}\frac12}$.
Then $\frac14\pi\le \alpha\le \frac34\pi$ or
$\beta{<}-\frac12 \ln(2\cos(2\alpha))$. In~the $x$-plane the curve
$\beta=-\frac12 \ln(2\cos(2\alpha))$,
for $0<\alpha<\frac14\pi$
is given by
\begin{gather}\label{curve}
 x=\cos(\alpha+ {\rm i}\beta)=\frac12\big(t+t^{-1}\big)\cos\alpha+\frac12{\rm i}\big(t-t^{-1}\big)\sin\alpha,
\end{gather}
where $t=\sqrt{2\cos(2\alpha)}$.
This curve and the corresponding curve for $\frac34\pi<\alpha<\pi$ are shown in Figure~\ref{fig4}.
Therefore, for $x\in D_1$, $|w_{13}|<1$ is satisfied if and only if $x$ lies in the unshaded region in Figure~\ref{fig4}.
\begin{figure}[h!]
 \centering
 \includegraphics[scale=.63]{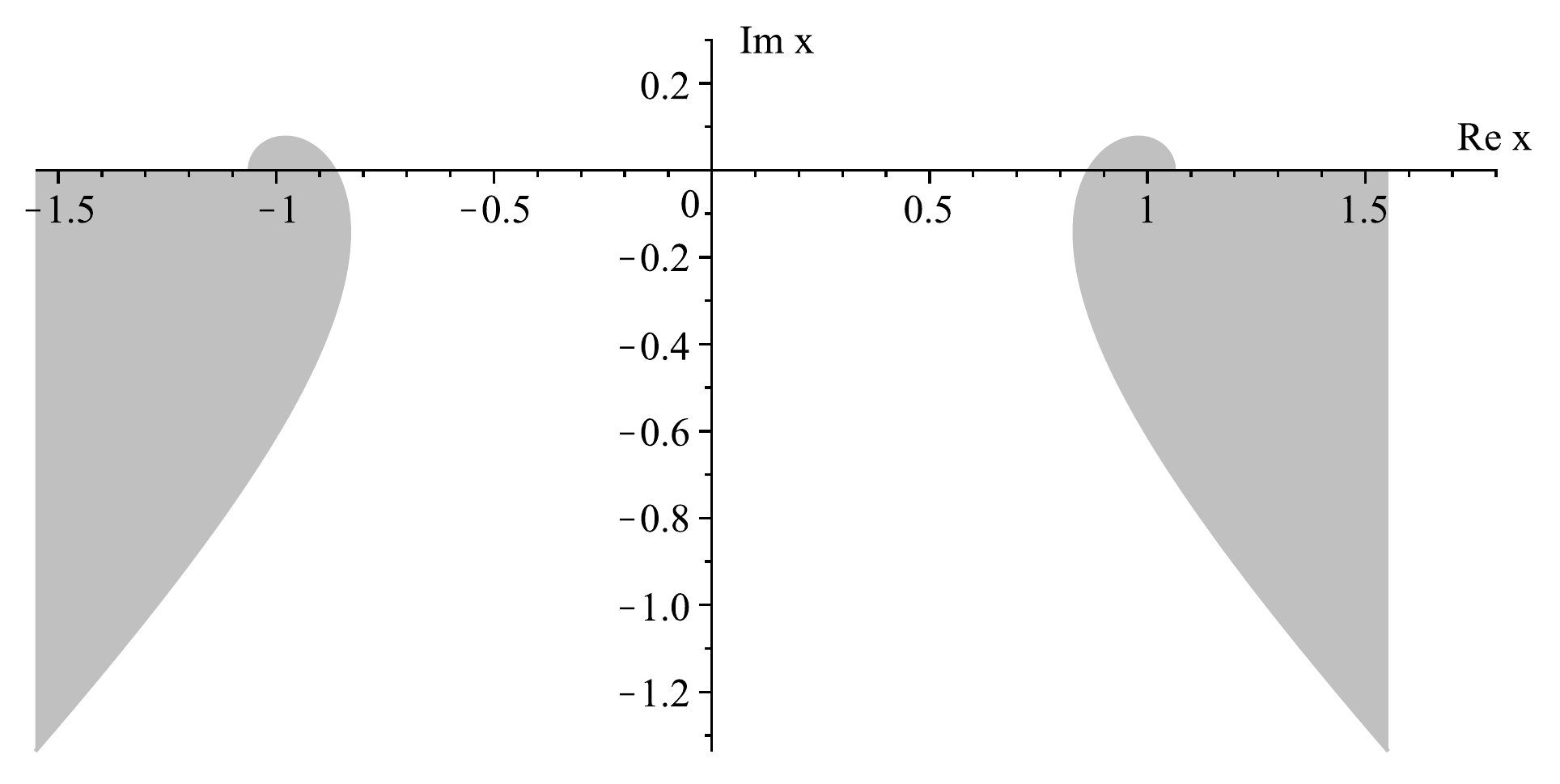}
 \caption{The region $|w_{16}|<1$ ($|w_{13}|>1$).}\label{fig4}
\end{figure}

\item The hypergeometric functions in Theorem~\ref{tIII1} (lower sign) have the argument
\begin{gather*}
w_{17}=\frac{x+y}{2y}.
\end{gather*}
The condition $|w_{17}|\!<\!1$ is satisfied if and only if $x$ lies in the unshaded region of Figure~\ref{fig5}.
\begin{figure}[h!]
 \centering
 \includegraphics[scale=.63]{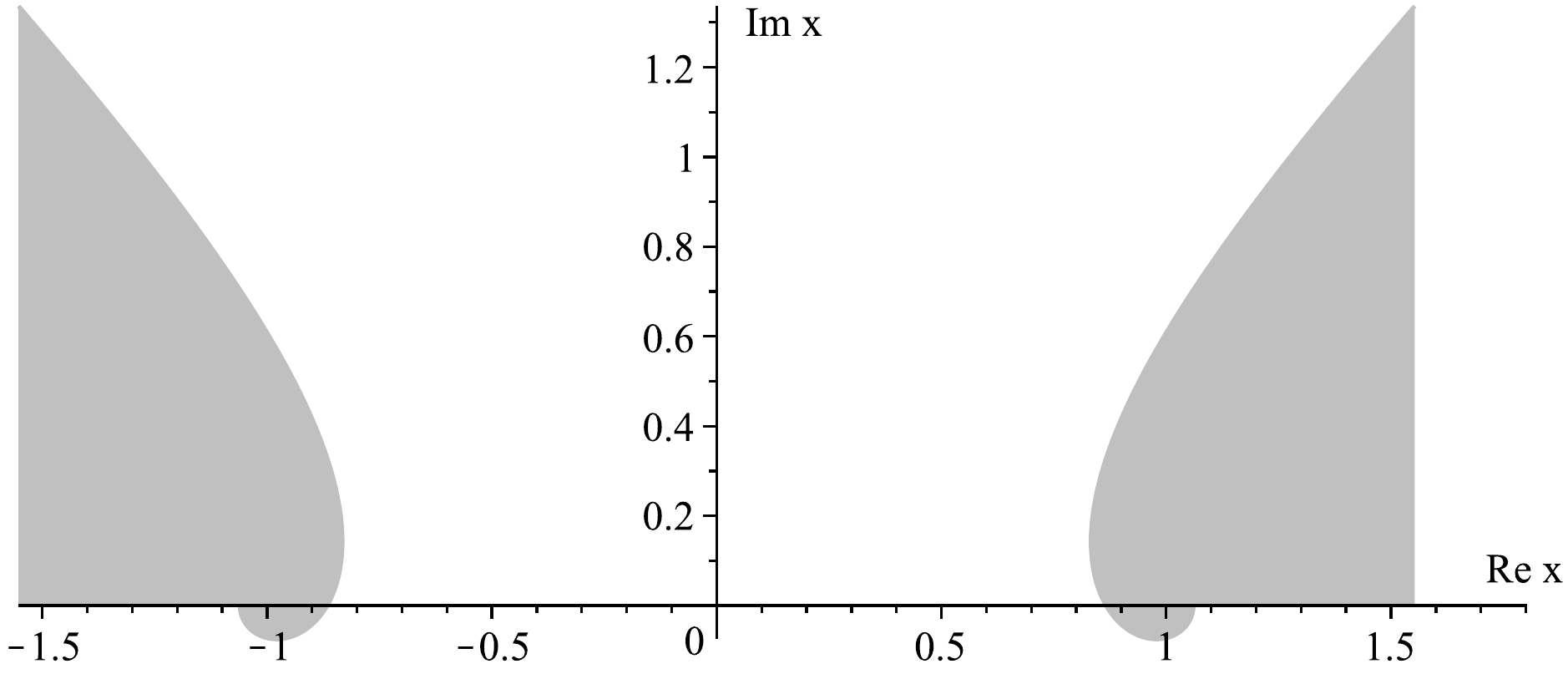}
 \caption{The region
$|w_{15}|<1$ ($|w_{17}|>1$).}\label{fig5}
\end{figure}

\item The hypergeometric functions in Theorem~\ref{tIII2} (upper sign) have the argument
\begin{gather*}
w_{14}=\frac{x-y}{x+y}.
\end{gather*}
We take $\sqrt{x^2-1}={\rm i}\sqrt{1-x^2}$ for $x\in D_1$.
Then using $x=\cos\theta$, $\mathop{\rm Re}\theta\in(0,\pi)$ gives $w_{14}=\expe^{-2{\rm i}\theta}$
so $|w_{14}|=\expe^{2\mathop{\rm Im}\theta}$. Then $|w_{14}|<1$ means $\mathop{\rm Im}\theta<0$ which therefore implies that
$\mathop{\rm Im} x>0$.
Then $|w_{14}|=\expe^{2\beta}$ so $|w_{14}|<1$ is satisfied if and only if $\mathop{\rm Im} x>0$.

\item The hypergeometric functions in Theorem~\ref{tIII2} (lower sign) have the argument
\begin{gather*}
w_{18}=\frac{x+y}{x-y} .
\end{gather*}
Since $w_{18}=1/w_{14}$,
then $|w_{18}|<1$ if and only if $\mathop{\rm Im} x<0$.
Of course, these
results depend on the choice of the sign of the root $\sqrt{x^2-1}$.

\item The hypergeometric functions in Theorem~\ref{tIII3} (upper sign) have the argument
\begin{gather*}
w_{15}=\frac{2y}{x+y}=\frac1{w_{17}} .
\end{gather*}
Then $|w_{15}|<1$ is equivalent to $|w_{17}|>1$ and this means that $x$ lies in the shaded region of Figure~\ref{fig5}.

\item The hypergeometric functions in Theorem~\ref{tIII3} (lower sign) have the argument
\begin{gather*}
w_{16}=\frac{2y}{-x+y}=\frac{1}{w_{13}} .
\end{gather*}
Then $|w_{16}|<1$ is equivalent to $|w_{13}|>1$ and this means that $x$ lies in the shaded region of Figure~\ref{fig5}.

We close this paper with the following interesting observations. In~the arguments $w_j$, $j=13,\dots,18$, we
replaced $\sqrt{x^2-1}$ by ${\rm i}\sqrt{1-x^2}$ because Ferrers functions are naturally defined on the domain $D_1$.
However, if we are interested in hypergeometric representations of the associated Legendre function $Q_\nu^\mu(x)$
we will use $\sqrt{x^2-1}:=\sqrt{x-1}\sqrt{x+1}$ which changes the domains on which $|w_j|<1$.
Let us denote these modifications of $w_j$ by $w_j^\ast$.
Since $w_{13}(x)=w_{13}^\ast(x)$ for $\mathop{\rm Im} x>0$ the part of the curve $|w^\ast_{13}|=1$ lying in the region $\mathop{\rm Re} x>0$, $\mathop{\rm Im} x>0$ is given by~\eqref{curve} for $0<\alpha<\frac16\pi$.
The parts of the curve
in the other quadrants are reflections of this arc at the real and imaginary axis.
The curve surrounding~$1$ is
shown in Figure~\ref{fig6}.
This means that the hypergeometric representation~\eqref{MOS31} with the Gauss hypergeometric series in place of {${}_2F_1$}
holds for all~$z$ that lie outside these curves.
Moreover, we obtain that $|w^\ast_{14}|<1$ for all $x\in D_2$. Therefore, the hypergeometric representation~\eqref{MOS32} with the Gauss hypergeometric series in place of
{${}_2F_1$} holds for all $z\in D_2$.
This appears to be the only representation of $Q_\nu^\mu$ valid on $D_2$ by the Gauss hypergeometric series.
\begin{figure}[h!]
 \centering
 \includegraphics[scale=.63]{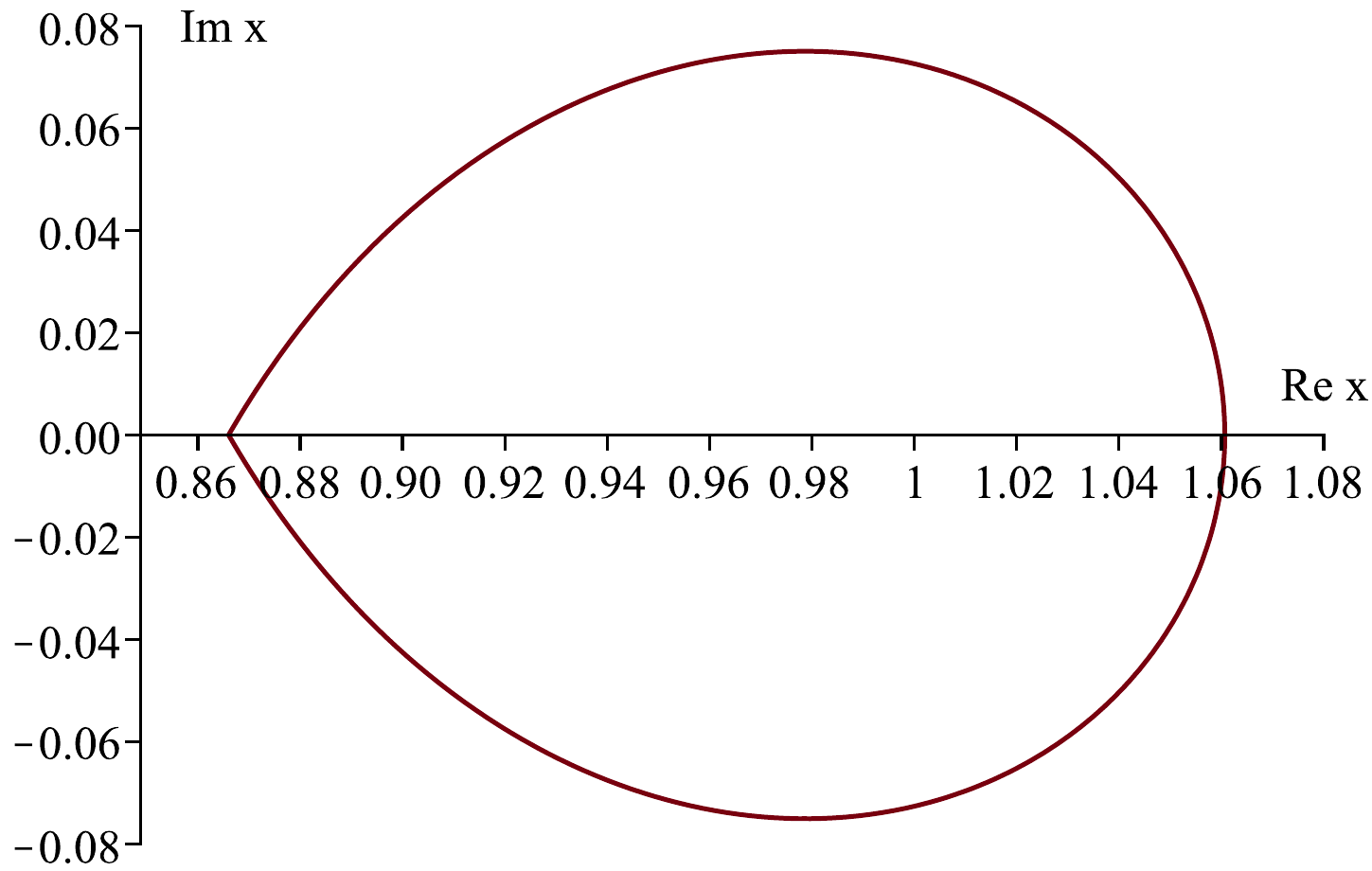}
 \caption{The curve $|w^\ast_{13}|=1$.
 Note that by analyzing the equation of the curve one can see that the innermost points occur at
 $x=\pm\frac{\sqrt{3}}{2}\approx \pm 0.866025$ and the outermost points occur at $x=\pm\frac{3\sqrt{2}}{4}\approx\pm 1.06066$.
 Also, the points with the largest absolute value of their imaginary parts on the curve occur at~$\mathop{\rm Re} x=\pm\sqrt{\frac{15}{32}+\frac{7\sqrt{5}}{32}}\approx \pm 0.978718$
 and $\mathop{\rm Im} x=\pm \sqrt{-\frac{11}{32}+\frac{5\sqrt{5}}{32}}\approx \pm 0.0750708$.}\label{fig6}
\end{figure}
\end{itemize}

\appendix

\section{Olbricht's 72 solutions of the associated Legendre equation}
\label{Olbrichtseventytwo}

Richard Olbricht~\cite[{pp.~17--20}]{Olbricht1888} gave a list of 72 solutions of the associated Legendre equation~\eqref{Legendre}
in terms of the Gauss hypergeometric function ${}_2F_1(a,b;c;z)$.
He does not state the domains of definition of these solutions and does not identify them in terms of known solutions.
Therefore, it might be useful to go through the list and add a few remarks to each entry.

\medskip

We will regularly use Euler's and Pfaff's transformations~\eqref{Euler}--\eqref{Pfaff2}
and the domains~\eqref{D1}--\eqref{D3}.
Olbricht's solutions will be denoted by $\L{{\rm I}}{j}$, $\L{{\rm II}}{j}$, $\L{{\rm III}}{j}$, $j=1,2,\dots,24$.

\subsection{The Group I arguments}
Consider
\begin{gather}\label{1}
\L{{\rm I}}{1}(x):=\left(\frac{1-x}{2}\right)^{\frac12\mu}\left(\frac{1+x}{2}\right)^{\frac12\mu} {}_2F_1\left(\mu-\nu,\nu+\mu+1;1+\mu;\frac{1-x}{2}\right).
\end{gather}
If we use principal values of the powers and also of the hypergeometric function {${}_2F_1$} then
$\L{{\rm I}}{1}$ is analytic on the domain $D_1$.
Using \eqref{Euler} and~\cite[equation~(14.3.1)]{NIST:DLMF} Olbricht has
\begin{gather*}
\L{{\rm I}}{1}(x)=\Gamma(1+\mu) \P_\nu^{-\mu}(x)
\end{gather*}
for $x\in D_1$.
If $\mathbf{L}^I_1$ denotes the right-hand side of \eqref{1} with {${}_2F_1$} replaced
by Olver's $\FF(a,b;c;z):=\frac{1}{\Gamma(c)}{{}_2F_1}(a,b;c;z)$
\cite[equation~(15.2.2)]{NIST:DLMF}
then
\begin{gather*}
\mathbf{L}^I_1(x)=\P_\nu^{-\mu}(x)
\end{gather*}
and this equation is true for all $\nu,\mu\in\C$.
The function
\begin{gather*}
\L{{\rm I}}{2}(x):=\left(\frac{1-x}{2}\right)^{-\frac12\mu}\left(\frac{1+x}{2}\right)^{\frac12\mu} {}_2F_1\left({-}\nu,\nu+1;1-\mu;\frac{1-x}{2}\right)
\end{gather*}
is analytic on $D_1$.
By applying \eqref{Euler} to $\L{{\rm I}}{1}$ and changing $\mu$ to $-\mu$,
\begin{gather*}
\L{{\rm I}}{2}(x)=\Gamma(1-\mu) \P_\nu^{\mu}(x),
\end{gather*}
for $x\in D_1$. Olbricht has
\begin{gather*}
\L{{\rm I}}{3}(x):=\left(\frac{1-x}{2}\right)^{\frac12\mu}\left(\frac{1+x}{2}\right)^{-\frac12\mu} {}_2F_1\left({-}\nu,\nu+1;1+\mu;\frac{1-x}{2}\right),
\end{gather*}
and by~\eqref{Euler}, $\L{{\rm I}}{1}=\L{{\rm I}}{3}$. Also,
\begin{gather*}
\L{{\rm I}}{4}(x):=\left(\frac{1-x}{2}\right)^{-\frac12\mu}\left(\frac{1+x}{2}\right)^{-\frac12\mu} {}_2F_1\left({-}\nu-\mu,\nu-\mu+1;1-\mu;\frac{1-x}{2}\right),
\end{gather*}
and by~\eqref{Euler}, $\L{{\rm I}}{2}=\L{{\rm I}}{4}$. One also has
\begin{gather*}
\L{{\rm I}}{5}(x):=\L{{\rm I}}{1}(-x),
\\
\L{{\rm I}}{6}(x):=\L{{\rm I}}{2}(-x),
\\
\L{{\rm I}}{7}(x):=\L{{\rm I}}{3}(-x),
\\
\L{{\rm I}}{8}(x):=\L{{\rm I}}{4}(-x),
\\
\L{{\rm I}}{9}(x):=\left(\frac{2}{1-x}\right)^{-\nu}\left(\frac{x+1}{x-1}\right)^{\frac12\mu}
{}_2F_1\left({-}\nu,\mu-\nu;-2\nu;\frac{2}{1-x}\right).
\end{gather*}
Using principal values, $\L{{\rm I}}{9}$ is analytic on $D_3$.
Using~\cite[Entry 24, p.~161]{MOS} and~\eqref{Euler} with~\cite[equation~(14.9.14)]{NIST:DLMF}
\begin{gather*}
\QQ_\nu^\mu(x)=\QQ_\nu^{-\mu}(x)
\end{gather*}
and~\cite[equation~(14.3.10)]{NIST:DLMF}
\begin{gather*}
\QQ_\nu^\mu(x):=\expe^{-\mu\pi {\rm i}}\frac{Q_\nu^\mu(x)}{\Gamma(\nu+\mu+1)},
\end{gather*}
we find
\begin{gather*}
\L{{\rm I}}{9}(x)= \frac{4^{-\nu}}{\sqrt\pi}\Gamma\left(\frac12-\nu\right) \QQ_{-\nu-1}^\mu(-x)
\end{gather*}
for $x\in D_3$. We also have
\begin{gather*}
\L{{\rm I}}{10}(x):=\left(\frac{2}{1-x}\right)^{\nu+1}\left(\frac{x+1}{x-1}\right)^{\frac12\mu} {}_2F_1\left(\nu+\mu+1,\nu+1;2\nu+2;\frac{2}{1-x}\right),
\end{gather*}
{and} $\L{{\rm I}}{10}$ is obtained from $\L{{\rm I}}{9}$ by replacing $\nu \mapsto -\nu-1$. Hence
\begin{gather*}
\L{{\rm I}}{10}(x)= \frac{4^{\nu+1}}{\sqrt\pi}\Gamma\left(\nu+\frac32\right) \QQ_\nu^\mu(-x)
\end{gather*}
for $x\in D_3$. Also,
\begin{gather*}
\L{{\rm I}}{11}(x):=\left(\frac{2}{1-x}\right)^{-\nu}\left(\frac{x+1}{x-1}\right)^{-\frac12\mu}
{}_2F_1\left({-}\nu,-\nu-\mu;-2\nu;\frac{2}{1-x}\right),
\end{gather*}
and $\L{{\rm I}}{11}$ is obtained from $\L{{\rm I}}{9}$ by replacing $\mu\mapsto -\mu$. Hence $\L{{\rm I}}{11}=\L{{\rm I}}{9}$.
One also has
\begin{gather*}
\L{{\rm I}}{12}(x):=\left(\frac{2}{1-x}\right)^{\nu+1}\left(\frac{x+1}{x-1}\right)^{-\frac12\mu}
{}_2F_1\left(\nu+1,\nu-\mu+1;2\nu+2;\frac{2}{1-x}\right),
\end{gather*}
where $\L{{\rm I}}{12}$ is obtained from $\L{{\rm I}}{10}$ by replacing $\mu\mapsto -\mu$, so $\L{{\rm I}}{12}=\L{{\rm I}}{10}$.
Also,
\begin{gather*}
\L{{\rm I}}{13}(x):=\L{{\rm I}}{9}(-x),
\end{gather*}
and using principal values, $\L{{\rm I}}{13}(x)$ is analytic on $D_2$, and therefore
\begin{gather*}
\L{{\rm I}}{13}(x)= \frac{4^{-\nu}}{\sqrt\pi}\Gamma\left(\frac12-\nu\right) \QQ_{-\nu-1}^\mu(x)
\end{gather*}
for $x\in D_2$. For $x\in D_2$, we also have
\begin{gather*}\allowdisplaybreaks
\L{{\rm I}}{14}(x):=\L{{\rm I}}{10}(-x),
\\
\L{{\rm I}}{15}(x):=\L{{\rm I}}{11}(-x),
\\
\L{{\rm I}}{16}(x):=\L{{\rm I}}{12}(-x),
\\
\L{{\rm I}}{17}(x):=\left(\frac{x-1}{x+1}\right)^{\frac12\mu}\left(\frac2{1+x}\right)^{-\nu} {}_2F_1\left({-}\nu,\mu-\nu;1+\mu;\frac{x-1}{x+1}\right).
\end{gather*}
This function is analytic on $D_2$.
By~\cite[Section~4.1.2, Entry 3]{MOS}
\begin{gather*}
\L{{\rm I}}{17}(x)=\Gamma(1+\mu)P_\nu^{-\mu}(x)
\end{gather*}
for $x\in D_2$. One also has
\begin{gather*}
\L{{\rm I}}{18}(x):=\left(\frac{x-1}{x+1}\right)^{-\frac12\mu}\left(\frac2{1+x}\right)^{-\nu} {}_2F_1\left({-}\nu,-\mu-\nu;1-\mu;\frac{x-1}{x+1}\right),
\end{gather*}
where $\L{{\rm I}}{18}$ is obtained from $\L{{\rm I}}{17}$ by replacing $\mu\mapsto -\mu$. Hence
\begin{gather*}
\L{{\rm I}}{18}(x):=\Gamma(1-\mu)P_\nu^{\mu}(x)
\end{gather*}
for $x\in D_2$. Also,
\begin{gather*}
\L{{\rm I}}{19}(x):=\left(\frac{x-1}{x+1}\right)^{\frac12\mu}\left(\frac2{1-x}\right)^{\nu+1} {}_2F_1\left(\nu+\mu+1,\nu+1;1+\mu;\frac{x-1}{x+1}\right),
\end{gather*}
and by \eqref{Euler}, $\L{{\rm I}}{19}=\L{{\rm I}}{17}$. Also,
\begin{gather*}
\L{{\rm I}}{20}(x):=\left(\frac{x-1}{x+1}\right)^{-\frac12\mu}\left(\frac2{1+x}\right)^{\nu+1} {}_2F_1\left(\nu+1,\nu-\mu+1;1-\mu;\frac{x-1}{x+1}\right),
\end{gather*}
and by \eqref{Euler}, $\L{{\rm I}}{20}=\L{{\rm I}}{18}$. Also,
\begin{gather*}
\L{{\rm I}}{21}(x):=\L{{\rm I}}{17}(-x),
\\
\L{{\rm I}}{22}(x):=\L{{\rm I}}{18}(-x),
\\
\L{{\rm I}}{23}(x):=\L{{\rm I}}{19}(-x),
\\
\L{{\rm I}}{24}(x):=\L{{\rm I}}{21}(-x).
\end{gather*}

\subsection{The Group II arguments}

The first function in Group II of Olbricht's list is
\begin{gather*}
\L{{\rm II}}{1}(x):=\big(1-x^2\big)^{\frac12\mu} {}_2F_1\left(\frac12\mu-\frac12\nu,\frac12\nu+\frac12\mu+\frac12;\frac12;x^2\right).
\end{gather*}
It is an even solution of \eqref{Legendre} on $D_1$ uniquely determined by the initial conditions
$y(0)=1$, $y'(0)=0$. By~\cite[equation~(14.5.1)]{NIST:DLMF},
\begin{gather*}
\L{{\rm II}}{1}(x)=2^{-\mu-1}\pi^{{-\frac12}}\Gamma\left(\frac12\nu-\frac12\mu+1\right)
\Gamma\left({-}\frac12\nu-\frac12\mu+\frac12\right)\big(\P_\nu^\mu(x)+\P_\nu^\mu(-x)\big)
\end{gather*}
for $x\in D_1$. We also have
\begin{gather*}
\L{{\rm II}}{2}(x):=x\big(1-x^2\big)^{\frac12\mu} {}_2F_1\left(\frac12\mu-\frac12\nu+\frac12,\frac12\nu+\frac12\mu+1;\frac32;x^2\right).
\end{gather*}
This function is analytic on $D_1$. It is an odd solution of \eqref{Legendre} on $D_1$ uniquely determined by the initial conditions
$y(0)=0$, $y'(0)=1$. Using~\cite[equation~(14.5.2)]{NIST:DLMF},
\begin{gather*}
\L{{\rm II}}{2}(x):=2^{-\mu-2}\pi^{{-\frac12}}\Gamma\left(\frac12\nu-\frac12\mu+\frac12\right) \Gamma\left({-}\frac12\nu-\frac12\mu\right)\big(\P_\nu^\mu(-x)-\P_\nu^\mu(x)\big),
\end{gather*}
for $x\in D_1$. Also,
\begin{gather*}
\L{{\rm II}}{3}(x):=\big(1-x^2\big)^{-\frac12\mu} {}_2F_1\left({-}\frac12\nu-\frac12\mu,\frac12\nu-\frac12\mu+\frac12;\frac12;x^2\right),
\end{gather*}
and by \eqref{Euler}, $\L{{\rm II}}{3}=\L{{\rm II}}{1}$.
Furthermore,
\begin{gather*}
\L{{\rm II}}{4}(x):=x\big(1-x^2\big)^{-\frac12\mu} {}_2F_1\left({-}\frac12\nu-\frac12\mu+\frac12,\frac12\nu-\frac12\mu+1;\frac32;x^2\right),
\end{gather*}
and by \eqref{Euler}, $\L{{\rm II}}{4}=\L{{\rm II}}{2}$.
Also,
\begin{gather*}
\L{{\rm II}}{5}(x):=\big(1-x^2\big)^{\frac12\mu}{}_2F_1\left(\frac12\mu-\frac12\nu,\frac12\nu+\frac12\mu+\frac12;1+\mu;1-x^2\right).
\end{gather*}
This function is analytic on $D_1\setminus {\rm i}\R$. By~\cite[Section~4.1.2, Entry 5]{MOS},
\begin{gather*}
\L{{\rm II}}{5}(x)=2^\mu\Gamma(1+\mu) \P_\nu^{-\mu}(x),
\end{gather*}
for $x\in D_1^+$. One also has
\begin{gather*}
\L{{\rm II}}{6}(x):=x\big(1-x^2\big)^{\frac12\mu}{}_2F_1\left(\frac12\mu-\frac12\nu+\frac12,\frac12\nu+\frac12\mu+1;1+\mu;1-x^2\right),
\end{gather*}
and by \eqref{Euler}, $\L{{\rm II}}{6}=\L{{\rm II}}{5}$ for $x\in D_1^+$. Also,
\begin{gather*}
\L{{\rm II}}{7}(x):=\big(1-x^2\big)^{-\frac12\mu}{}_2F_1\left({-}\frac12\nu-\frac12\mu,\frac12\nu-\frac12\mu+\frac12;1-\mu;1-x^2\right),
\end{gather*}
and $\L{{\rm II}}{7}$ is obtained from $\L{{\rm II}}{5}$ by replacing $\mu\mapsto -\mu$.
Therefore,
\begin{gather*}
\L{{\rm II}}{7}(x)=2^{-\mu}\Gamma(1-\mu) \P_\nu^\mu(x)
\end{gather*}
for $x\in D_1^+$. Also,
\begin{gather*}
\L{{\rm II}}{8}(x):=x\big(1-x^2\big)^{-\frac12\mu}{}_2F_1\left(\frac12\nu-\frac12\mu+1,-\frac12\nu-\frac12\mu+\frac12;1-\mu;1-x^2\right),
\end{gather*}
and by \eqref{Euler}, $\L{{\rm II}}{8}(x)=\L{{\rm II}}{7}(x)$ for $x\in D_1^+$. Also,
\begin{gather*}
\L{{\rm II}}{9}(x):=
{x^{\nu-\mu}}\big(x^2-1\big)^{\frac12\mu} {}_2F_1\left(\frac12\mu-\frac12\nu,\frac12\mu-\frac12\nu+\frac12;\frac12-\nu;\frac1{x^2}\right).
\end{gather*}
We use $\big(x^2-1\big)^\alpha$ as an abbreviation for $(x-1)^\alpha(x+1)^\alpha$, so $\big(x^2-1\big)^\alpha$ is analytic on $D_2$.
Olbricht has $\big(1-x^2\big)^{\frac12\mu}$ in place of $\big(x^2-1\big)^{\frac12\mu}$ but this has the disadvantage that when using
principal values the function is not analytic for any real $x$.
$\L{{\rm II}}{9}$ is analytic on $D_2$. By~\cite[Section~4.1.2, Entry 28]{MOS},
\begin{gather*}
\L{{\rm II}}{9}(x)=2^{-\nu}\pi^{{-\frac12}}\Gamma\left(\frac12-\nu\right)
\QQ_{-\nu-1}^\mu(x)
\end{gather*}
for $x\in D_2$. Also,
\begin{gather*}
\L{{\rm II}}{10}(x):=\left(\frac1x\right)^{\nu+\mu+1}\big(x^2-1\big)^{\frac12\mu} {}_2F_1\left(\frac12\nu+\frac12\mu+\frac12,\frac12\nu+\frac12\mu+1;\nu+\frac32;\frac1{x^2}\right).
\end{gather*}
Olbricht has $\big(1-x^2\big)^{\frac12\mu}$ in place of $\big(x^2-1\big)^{\frac12\mu}${, and}
$\L{{\rm II}}{10}$ is obtained from $\L{{\rm II}}{9}$ by replacing $\nu\mapsto -\nu-1$. Hence
\begin{gather*}
\L{{\rm II}}{10}(x)=2^{\nu+1}\pi^{{-\frac12}}\Gamma\left(\nu+\frac32\right)
\QQ_\nu^\mu(x)
\end{gather*}
for $x\in D_2$. Also,
 \begin{gather*}
\L{{\rm II}}{11}(x):={x^{{{\mu-\nu-1}}}}\big(x^2-1\big)^{-\frac12\mu} {}_2F_1\left(\frac12\nu-\frac12\mu+\frac12,\frac12\nu-\frac12\mu+1;\nu+\frac32;\frac1{x^2}\right).
\end{gather*}
Olbricht has $\big(1-x^2\big)^{\frac12\mu}$ in place of $\big(x^2-1\big)^{\frac12\mu}${, and}
$\L{{\rm II}}{11}$ is obtained from $\L{{\rm II}}{10}$ by replacing $\mu\mapsto -\mu$. Hence
$\L{{\rm II}}{11}=\L{{\rm II}}{10}$. Also,
\begin{gather*}
\L{{\rm II}}{12}(x):={x^{\nu+\mu}}\big(x^2-1\big)^{-\frac12\mu} {}_2F_1\left({-}\frac12\nu-\frac12\mu,-\frac12\nu-\frac12\mu+\frac12;\frac12-\nu;\frac1{x^2}\right).
\end{gather*}
Olbricht has $\big(1-x^2\big)^{\frac12\mu}$ in place of $\big(x^2-1\big)^{\frac12\mu}${, and}
$\L{{\rm II}}{12}$ is obtained from $\L{{\rm II}}{9}$ by replacing $\mu\mapsto -\mu$. Hence
$\L{{\rm II}}{12}=\L{{\rm II}}{9}$. Also,
\begin{gather*}
\L{{\rm II}}{13}(x):=\big(x^2-1\big)^{\frac12\nu} {}_2F_1\left(\frac12\mu-\frac12\nu,-\frac12\mu-\frac12\nu;\frac12-\nu;\frac{1}{1-x^2}\right).
\end{gather*}
Olbricht has $\big(1-x^2\big)^{\frac12\nu}$ in place of $\big(x^2-1\big)^{\frac12\nu}${, and} $\L{{\rm II}}{13}$ is analytic on $D_2$.
By \eqref{Pfaff1}, $\L{{\rm II}}{13}=\L{{\rm II}}{9}$. More precisely, we use \eqref{Pfaff1} for $x>1$ and then apply the identity theorem for analytic functions.
Also,
\begin{gather*}
\L{{\rm II}}{14}(x):=\big(x^2-1\big)^{-\frac12\nu-\frac12} {}_2F_1\left(\frac12\nu+\frac12\mu+\frac12,\frac12\nu-\frac12\mu+\frac12;\nu+\frac32;\frac{1}{1-x^2}\right).
\end{gather*}
Olbricht has $\big(1-x^2\big)^{-\frac12\nu-\frac12}$ in place of $\big(x^2-1\big)^{-\frac12\nu-\frac12}$, and by \eqref{Pfaff1}, $\L{{\rm II}}{14}=\L{{\rm II}}{10}$.
Also,
\begin{gather*}
\L{{\rm II}}{15}(x):=x\big(x^2-1\big)^{\frac12\nu-\frac12} {}_2F_1\left(\frac12\mu-\frac12\nu+\frac12,-\frac12\nu-\frac12\mu+\frac12;\frac12-\nu;\frac{1}{1-x^2}\right).
\end{gather*}
Olbricht has $\big(1-x^2\big)^{\frac12\nu-\frac12}$ in place of $\big(x^2-1\big)^{\frac12\nu-\frac12}${, and b}%
y \eqref{Pfaff1}, $\L{{\rm II}}{15}=\L{{\rm II}}{12}$.
Also,
\begin{gather*}
\L{{\rm II}}{16}(x):=x\big(x^2-1\big)^{\frac12\nu-1} {}_2F_1\left(\frac12\nu+\frac12\mu+1,\frac12\nu-\frac12\mu+1;\nu+\frac32;\frac{1}{1-x^2}\right).
\end{gather*}
Olbricht has $\big(1-x^2\big)^{\frac12\nu-1}$ in place of $\big(x^2-1\big)^{\frac12\nu-1}$, and by \eqref{Pfaff1}, $\L{{\rm II}}{16}=\L{{\rm II}}{11}$.
Also,
\begin{gather*}
\L{{\rm II}}{17}(x):=\big(1-x^2\big)^{\frac12\nu}{}_2F_1\left(\frac12\mu-\frac12\nu,-\frac12\mu-\frac12\nu; \frac12;\frac{x^2}{x^2-1}\right),
\end{gather*}
and by \eqref{Pfaff1}, $\L{{\rm II}}{17}=\L{{\rm II}}{1}$.
Also,
\begin{gather*}
\L{{\rm II}}{18}(x):=\big(1-x^2\big)^{-\frac12\nu-\frac12} {}_2F_1\left(\frac12\nu+\frac12\mu+\frac12,\frac12\nu-\frac12\mu+\frac12;\frac12; \frac{x^2}{x^2-1}\right),
\end{gather*}
and by \eqref{Pfaff1}, $\L{{\rm II}}{18}=\L{{\rm II}}{3}$.
Also,
\begin{gather*}
\L{{\rm II}}{19}(x):=x\big(1-x^2\big)^{\frac12\nu-\frac12} {}_2F_1\left(\frac12\mu-\frac12\nu+\frac12,-\frac12\nu-\frac12\mu+\frac12;\frac32; \frac{x^2}{x^2-1}\right),
\end{gather*}
and by \eqref{Pfaff1}, $\L{{\rm II}}{19}=\L{{\rm II}}{2}$.
Also,
\begin{gather*}
\L{{\rm II}}{20}(x):=x\big(1-x^2\big)^{-\frac12\nu-1} {}_2F_1\left(\frac12\nu+\frac12\mu+1,\frac12\nu-\frac12\mu+1;\frac32;\frac{x^2}{x^2-1}\right),
\end{gather*}
and by \eqref{Pfaff1}, $\L{{\rm II}}{20}=\L{{\rm II}}{4}$.
Also,
\begin{gather*}
\L{{\rm II}}{21}(x):=x^{{\nu-\mu}}\big(1-x^2\big)^{\frac12\mu}{}_2F_1 \left(\frac12\mu-\frac12\nu,\frac12\mu-\frac12\nu+\frac12;1+\mu;
\frac{x^2-1}{x^2}\right),
\end{gather*}
and by \eqref{Pfaff1}, $\L{{\rm II}}{21}=\L{{\rm II}}{5}$.
Also,
\begin{gather*}
\L{{\rm II}}{22}(x):=x^{{-\nu-\mu-1}}\big(1-x^2\big)^{\frac12\mu} {}_2F_1\left(\frac12\nu+\frac12\mu+\frac12,\frac12\nu+\frac12\mu+1;1+\mu;\frac{x^2-1}{x^2}\right),
\end{gather*}
and by \eqref{Pfaff1}, $\L{{\rm II}}{22}=\L{{\rm II}}{6}$.
Also,
\begin{gather*}
\L{{\rm II}}{23}(x):=x^{{\mu-\nu-1}}\big(1-x^2\big)^{-\frac12\mu} {}_2F_1\left(\frac12\nu-\frac12\mu+1,\frac12\nu-\frac12\mu+\frac12;1-\mu;\frac{x^2-1}{x^2}\right),
\end{gather*}
and by \eqref{Pfaff1}, $\L{{\rm II}}{23}=\L{{\rm II}}{7}$.
Also,
\begin{gather*}
\L{{\rm II}}{24}(x):=x^{\mu+\nu}\big(1-x^2\big)^{-\frac12\mu} {}_2F_1\left({-}\frac12\nu-\frac12\mu+\frac12,-\frac12\mu-\frac12\nu;1-\mu;\frac{x^2-1}{x^2}\right),
\end{gather*}
and by \eqref{Pfaff1}, $\L{{\rm II}}{24}=\L{{\rm II}}{8}$.

\subsection{The Group III arguments}

In this section there appears the function $\sqrt{x^2-1}$. There are two versions of this function:
\begin{gather*}
y_1:={\rm i}\sqrt{1-x^2},\qquad \text{for}\quad x\in D_1,
\\
y_2:=x\sqrt{1-x^{-2}},\qquad \text{for}\quad x\in\C\setminus[-1,1],
\end{gather*}
where the root denotes its principal value.
If $\pm\mathop{\rm Im} x>0$ then $y_1=\pm y_2$. Each entry in Olbricht's list has two versions depending on whether $\sqrt{x^2-1}$
has the meaning $y_k$, $k=1,2$.
We will denote
these functions by $\L{{\rm III}}{j,k}$, where $1\le j\le 24$, $1\le k\le 2$.

In Entries 1, 2, 3, 4 of Olbricht's list the hypergeometric function has the argument
\begin{gather*}
w_{17}=\frac{\sqrt{x^2-1}+x}{2\sqrt{x^2-1}} .
\end{gather*}
If we use $y_1$ for $\sqrt{x^2-1}$ then the function
\begin{gather*}
w_{17,1}=\frac{y_1+x}{2y_1}
\end{gather*}
is a conformal map from $D_1$ to the complex plane cut along the rays $(-\infty,0]$ and $[1,\infty)$.
To see this let $x=\cos\theta$. This is a conformal map from the strip $S=\{\theta\colon \mathop{\rm Re} \theta\in(0,\pi)\}$ onto $D_1$.
Then
\begin{gather*}
\frac{y_1+x}{2y_1} =\frac12 -\frac{\rm i}{2}\cot\theta.
\end{gather*}
Now $\cot\theta$ is a conformal map from $S$ to $\C\setminus((- {\rm i}\infty,-{\rm i}]\cup [{\rm i}, {\rm i}\infty))$.
Therefore, using the principal value of the hypergeometric function, the function ${}_2F_1(a,b;c;w_{17,1})$ the function is analytic on~$D_1$.
On the other hand, if we use $y_2$ for $\sqrt{x^2-1}$ then the values of
\begin{gather*}
w_{17,2}=\frac{y_2+x}{2y_2}
\end{gather*}
lie on the ray $[1,\infty)$ for $x>1$, so
${}_2F_1(a,b;c;w_{17,2})$ cannot be defined
for $x>1$ when using the principal value
of the hypergeometric function. Therefore we will not
allow $y_2$ in Entries $1,2,3,4$ of Olbrichts's list.

In Entries 5, 6, 7, 8 the hypergeometric function has argument
\begin{gather*}
w_{13}=\frac{\sqrt{x^2-1}-x}{2\sqrt{x^2-1}} .
\end{gather*}
Since $y_1$ is an even function of $x$ ($y_2$ is odd) we have
\begin{gather*}
w_{13,1}(x)=\frac{y_1-x}{2y_1}=w_{17,1}(-x)
\end{gather*}
so we can define ${}_2F_1(a,b;c;w_{13,1})$ again on $D_1$.
However, the situation is now different for $y_2$.
If we use $y_2$ for $\sqrt{x^2-1}$ then the function
\begin{gather*}
w_{13,2}=\frac{y_2-x}{2y_2}
\end{gather*}
is analytic on $D_2$ and its range is $\big\{z\in\C\colon \mathop{\rm Re} z<\frac12\big\}\setminus\big({-}\infty,-\frac12\big)$.
Therefore,
${}_2F_1(a,b;c;w_{13,2})$
is analytic on $D_2$.
This follows as in the proof of Theorem~\ref{tIII1}.

In the following list of 24 solutions of Olbricht the factors in front of the hypergeometric function are
rewritten. This is necessary in order to obtain reasonable branch cuts.
For example, solution 7 of Olbricht is
\begin{gather*}
\left(\frac{\sqrt{x^2-1}-x}{2\sqrt{x^2-1}}\right)^{\frac12\nu+\frac12} \left(\frac{\sqrt{x^2-1}+x}{2\sqrt{x^2-1}}\right)^{-\frac12\nu}{}_2F_1\left(\frac12-\mu,\mu+\frac12;\nu+\frac32;
\frac{\sqrt{x^2-1}-x}{2\sqrt{x^2-1}}\right).
\end{gather*}
If we choose $y_2$ for $\sqrt{x^2-1}$ then, up to a constant factor, this solution should agree with the
associated Legendre function $Q_\nu^\mu(x)$. However, the basis of the first power is a negative number for $x>1$. Using that
\begin{gather*}
\Big(x-\sqrt{x^2-1}\Big)\Big(x+\sqrt{x^2-1}\Big)=1,
\end{gather*}
we rewrite the solution as
\begin{gather*}
(2y_2)^{-\frac12}(x+y_2)^{-\nu-\frac12} {}_2F_1\left(\frac12-\mu,\mu+\frac12;\nu+\frac32;
\frac{y_2-x}{2y_2}\right).
\end{gather*}
This function is now analytic on $D_2$ as it should be.

The first entry in Olbricht's list is
\begin{gather*}
\L{{\rm III}}{1,1}(x):=(2y_1)^\nu {}_2F_1\left(\mu-\nu,-\mu-\nu;\frac12-\nu;\frac{y_1+x}{2y_1}\right)=\L{{\rm III}}{5,1}(-x).
\end{gather*}
The function $\L{{\rm III}}{5,1}$ will be identified below.
Also,
\begin{gather*}
\L{{\rm III}}{2,1}(x):=(2y_1)^{-\frac12}(y_1-x)^{\nu+\frac12} {}_2F_1\left(\frac12-\mu,\mu+\frac12;\frac12-\nu;\frac{y_1+x}{2y_1}\right),
\end{gather*}
and by \eqref{Euler}, $\L{{\rm III}}{2,1}=\L{{\rm III}}{1,1}$.
Also,
\begin{gather*}
\L{{\rm III}}{3,1}(x):=(2y_1)^{-\frac12}(y_1-x)^{-\nu-\frac12} {}_2F_1\left(\frac12-\mu,\mu+\frac12;\nu+\frac32;\frac{y_1+x}{2y_1}\right),
\end{gather*}
and $\L{{\rm III}}{3,1}$ is obtained from $\L{{\rm III}}{2,1}$ by replacing $\nu\mapsto -\nu-1$. Also,{\samepage
\begin{gather*}
\L{{\rm III}}{4,1}(x):=(2y_1)^{-\nu-1} {}_2F_1\left(\nu-\mu+1,\nu+\mu+1;\nu+\frac32;\frac{y_1+x}{2y_1}\right),
\end{gather*}
and by \eqref{Euler}, $\L{{\rm III}}{4,1}=\L{{\rm III}}{3,1}$.}

The fifth entry in Olbricht's list is
\begin{gather*}
\L{{\rm III}}{5,k}(x):=(2y_k)^\nu {}_2F_1\left(\mu-\nu,-\mu-\nu;\frac12-\nu;\frac{y_k-x}{2y_k}\right),
\end{gather*}
and this function is analytic on $D_k$ for $k=1,2$.
By~\cite[Section~4.1.2, Entry 31]{MOS} and \eqref{Euler},
\begin{gather*}
 \L{{\rm III}}{5,2}(x)=\pi^{-1/2}\Gamma\left(\frac12-\nu\right) \QQ_{-\nu-1}^\mu(x)\qquad\text{for}\quad x\in D_2.
\end{gather*}
Since $y_1=y_2$ for $\mathop{\rm Im} x>0$,
\begin{gather*}
 \L{{\rm III}}{5,1}(x)=\pi^{-1/2}\Gamma\left(\frac12-\nu\right) \QQ_{-\nu-1}^\mu(x)\qquad\text{if}\quad \mathop{\rm Im} x>0.
\end{gather*}
We know from~\cite[p.~147, equation~(21)]{Schafke63} that
\begin{gather*}
 \QQ_{-\nu-1}^\mu\big(1+(x-1)\expe^{2\pi {\rm i}}\big)=\expe^{-\pi {\rm i}\mu}\QQ_{-\nu-1}^\mu(x)-
 \frac{\pi {\rm i}}{\Gamma(-\nu-\mu)}P_{-\nu-1}^{-\mu}(x) .
\end{gather*}
Therefore,
\begin{gather*}
\L{{\rm III}}{5,1}(x)=\pi^{-1/2}\Gamma\left(\frac12-\nu\right)\left(\expe^{-\pi {\rm i}\mu}\QQ_{-\nu-1}^\mu(x)-\frac{\pi {\rm i}}{\Gamma(-\nu-\mu)}P_\nu^{-\mu}(x)
\right)\qquad\text{if}\quad \mathop{\rm Im} x<0.
\end{gather*}
Entries 6, 7, 8 in Olbricht's list are
\begin{gather*}
\L{{\rm III}}{6,k}(x):=(2y_k)^{-\frac12}(x+y_k)^{\nu+\frac12}
{}_2F_1\left(\frac12-\mu,\mu+\frac12;\frac12-\nu;\frac{y_k-x}{2y_k}\right),
\\
\L{{\rm III}}{7,k}(x):=(2y_k)^{-\frac12}(x+y_k)^{-\nu-\frac12}
{}_2F_1\left(\frac12-\mu,\mu+\frac12;\nu+\frac32;\frac{y_k-x}{2y_k}\right),
\\
\L{{\rm III}}{8,k}(x):=(2y_k)^{-\nu-1}
{}_2F_1\left(\nu-\mu+1,\nu+\mu+1;\nu+\frac32;\frac{y_k-x}{2y_k}\right).
\end{gather*}
By \eqref{Euler}, $\L{{\rm III}}{6,k}=\L{{\rm III}}{5,k}$ and $\L{{\rm III}}{8,k}=\L{{\rm III}}{7,k}$.
$\L{{\rm III}}{7,k}$ is obtained from $\L{{\rm III}}{6,k}$ by replacing $\nu\mapsto -\nu-1$.

In Entries 9, 10, 11, 12 the hypergeometric function has the argument
\begin{gather*}
w_{15}=\frac{2\sqrt{x^2-1}}{\sqrt{x^2-1}+x} .
\end{gather*}
If $\sqrt{x^2-1}$ is replaced by $y_1$ or $y_2$ then purely imaginary $x$ are mapped to the branch cut $[1,\infty)$ of the hypergeometric function. Therefore, we will work on $D_k^+$.
The function
\begin{gather*}
\L{{\rm III}}{9,k}(x):=(2y_k)^\mu(x+y_k)^{\nu-\mu} {}_2F_1\left(\mu-\nu,\mu+\frac12;1+2\mu;\frac{2y_k}{y_k+x}\right)
\end{gather*}
is analytic on $D_k^+$ for $k=1,2$.
By~\cite[Section~4.1.2, Entry 15]{MOS},
\begin{gather*}
 \L{{\rm III}}{9,2}(x)=4^\mu\Gamma(1+\mu) P_\nu^{-\mu}(x)\qquad\text{for}\quad x\in D_2^+.
\end{gather*}
This implies
\begin{gather*}
 \L{{\rm III}}{9,1}(x)={\rm e}^{\frac12{\rm i}\pi\mu}4^\mu\Gamma(1+\mu) \P_\nu^{-\mu}(x)\qquad\text{for}\quad x\in D_1^+.
\end{gather*}
Also,
\begin{gather*}
\L{{\rm III}}{10,k}(x):= (2y_k)^{-\mu}(x+y_k)^{\mu+\nu}{}_2F_1\left({-}\nu-\mu,\frac12-\mu;1-2\mu;\frac{2y_k}{y_k+x}\right)
\end{gather*}
so $\L{{\rm III}}{10,k}$ is obtained from $\L{{\rm III}}{9,k}$ by replacing $\mu\mapsto -\mu$. Hence
\begin{gather*}
\L{{\rm III}}{10,2}(x)=4^{-\mu}\Gamma(1-\mu) P_\nu^\mu(x)\qquad\text{for}\quad x\in D_2^+.
\end{gather*}
Also,
\begin{gather*}
\L{{\rm III}}{11,k}(x):=(2y_k)^{-\mu}(y_k+x)^{-1+\mu-\nu} {}_2F_1\left(\nu-\mu+1,\frac12-\mu;1-2\mu;\frac{2y_k}{y_k+x}\right),
\end{gather*}
and $\L{{\rm III}}{11,k}$ is obtained from $\L{{\rm III}}{10,k}$ by replacing $\nu\mapsto -\nu-1$. By \eqref{Euler},
$\L{{\rm III}}{11,k}=\L{{\rm III}}{10,k}$ on $D_k^+$. Also,
{\samepage\begin{gather*}
\L{{\rm III}}{12,k}(x):=(2y_k)^\mu(y_k+x)^{-1-\mu-\nu} {}_2F_1\left(\nu+\mu+1,\mu+\frac12;1+2\mu;\frac{2y_k}{y_k+x}\right) .
\end{gather*}
$\L{{\rm III}}{12,k}$ is obtained from $\L{{\rm III}}{9,k}$ by replacing $\nu\mapsto -\nu-1$. By \eqref{Euler},
$\L{{\rm III}}{12,k}=\L{{\rm III}}{9,k}$ on $D_k^+$.}

Entries 13, 14, 15, 16 in Olbricht's list use the argument
\begin{gather*}
w_{16}=\frac{2\sqrt{x^2-1}}{\sqrt{x^2-1}-x} .
\end{gather*}
Again purely imaginary $x$ are mapped to the branch cut $[1,\infty)$ of the hypergeometric function.
Olbricht has
\begin{gather*}
\L{{\rm III}}{13,k}(x):=(2y_k)^\mu(x-y_k)^{\nu-\mu} {}_2F_1\left(\mu-\nu,\mu+\frac12;1+2\mu;\frac{2y_k}{y_k-x}\right),
\\
\L{{\rm III}}{14,k}(x):=(2y_k)^{-\mu}(x-y_k)^{\mu+\nu} {}_2F_1\left({-}\nu-\mu,\frac12-\mu;1-2\mu;\frac{2y_k}{y_k-x}\right),
\\
\L{{\rm III}}{15}(x):=(2y_k)^{-\mu}(x-y_k)^{-1+\mu-\nu} {}_2F_1\left(\nu-\mu+1,\frac12-\mu;1-2\mu;\frac{2y_k}{y_k-x}\right),
\\
\L{{\rm III}}{16}(x):=(2y_k)^\mu(x-y_k)^{-1-\mu-\nu} {}_2F_1\left(\nu+\mu+1,\mu+\frac12;1+2\mu;\frac{2y_k}{y_k-x}\right).
\end{gather*}
Using \eqref{Pfaff1} these functions can be reduced to Entries 9, 10, 11, 12 as follows
\begin{gather*}
\L{{\rm III}}{j,k}=\L{{\rm III}}{j-4,k} \qquad \text{for}\quad j=13,14,15,16.
\end{gather*}
Entries 17 to 24 in Olbricht's list are
\begin{gather*}\allowdisplaybreaks
\L{{\rm III}}{17,k}(x):=(2y_k)^\mu(y_k-x)^{\nu-\mu} {}_2F_1\left(\mu-\nu,\mu+\frac12;\frac12-\nu;\frac{x+y_k}{x-y_k}\right),
\\
\L{{\rm III}}{18,k}(x):=(2y_k)^{-\mu}(y_k-x)^{\mu+\nu} {}_2F_1\left({-}\nu-\mu,\frac12-\mu;\frac12-\nu;\frac{x+y_k}{x-y_k}\right),
\\
\L{{\rm III}}{19,k}(x):=(2y_k)^\mu(y_k-x)^{-\nu-\mu-1}{}_2F_1\left(\frac12+\mu,\nu+\mu+1;\nu+\frac32;\frac{x+y_k}{x-y_k}\right),
\\
\L{{\rm III}}{20,k}(x):=(2y_k)^{-\mu}(y_k-x)^{\mu-\nu-1}{}_2F_1\left(\frac12-\mu,\nu-\mu+1;\nu+\frac32;\frac{x+y_k}{x-y_k}\right),
\\
\L{{\rm III}}{21,k}(x):=(2y_k)^\mu(x+y_k)^{\nu-\mu} {}_2F_1\left(\mu-\nu,\mu+\frac12;\frac12-\nu;\frac{x-y_k}{x+y_k}\right),
\\
\L{{\rm III}}{22,k}(x):=(2y_k)^{-\mu}(x+y_k)^{\mu+\nu} {}_2F_1\left({-}\nu-\mu,\frac12-\mu;\frac12-\nu;\frac{x-y_k}{x+y_k}\right),
\\
\L{{\rm III}}{23,k}(x):=(2y_k)^\mu(x+y_k)^{-1-\mu-\nu}{}_2F_1\left(\frac12+\mu,\nu+\mu+1;\nu+\frac32;\frac{x-y_k}{x+y_k}\right),
\\
\L{{\rm III}}{24,k}(x):=(2y_k)^{-\mu}(x+y_k)^{-1+\mu-\nu}{}_2F_1\left(\frac12-\mu,\nu-\mu+1;\nu+\frac32; \frac{x-y_k}{x+y_k}\right).
\end{gather*}
Using \eqref{Pfaff1}, these functions can be reduced to Entries 1 to 8 as follows
\begin{gather*}
\L{{\rm III}}{j,k}=\L{{\rm III}}{j-16,k} \qquad \text{for}\quad j=17,18,\dots,24.
\end{gather*}

\section[Values on the cut [1,infty) of the Gauss hypergeometric function]
{Values on the cut $\boldsymbol{[1,\infty)}$ of the Gauss \\hypergeometric function}\label{hypercutsection}

The hypergeometric function has a branch cut along the ray $[1,\infty)$. Along this branch cut it has limiting values
$\F{a,b}{c}{x\pm {\rm i}0}$, $x>1$, that are usually different when we approach the branch cut from above or below.
Since these limiting values play a role in this paper, we note the following results.

\begin{thm}\label{Theorem1}
Let $a,b\in\C$, $c\in\C\setminus-\N_0$, $x\in(1,\infty)$. Then
\begin{gather*}
\F{a,b}{c}{x\pm {\rm i}0}=\frac{\Ga(c)\Ga(b-a)}{\Ga(b)\Ga(c-a)}\expe^{\pm {\rm i}\pi a}x^{-a}
\F{a,a-c+1}{a-b+1}{\frac1x}\nonumber
\\ \hphantom{\F{a,b}{c}{x\pm {\rm i}0}=}
{}+\frac{\Ga(c)\Ga(a-b)}{\Ga(a)\Ga(c-b)}\expe^{\pm {\rm i}\pi b}x^{-b}
\F{b,b-c+1}{b-a+1}{\frac1x}\!.
\end{gather*}
\end{thm}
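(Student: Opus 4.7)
The plan is to reduce the identity to the classical connection formula between solutions of the hypergeometric equation at the singular points $w=0$ and $w=\infty$, and then carefully track the values of the multi-valued factors $(-w)^{-a}$ and $(-w)^{-b}$ as $w$ approaches the cut $[1,\infty)$ from either side.

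First I invoke the standard identity \cite[equation~(15.10.21)]{NIST:DLMF}, which expresses $\F{a,b}{c}{w}$ as a linear combination of two terms of the shape $(-w)^{-a}\,\F{a,a-c+1}{a-b+1}{1/w}$ and $(-w)^{-b}\,\F{b,b-c+1}{b-a+1}{1/w}$ with coefficients $\Ga(c)\Ga(b-a)/(\Ga(b)\Ga(c-a))$ and $\Ga(c)\Ga(a-b)/(\Ga(a)\Ga(c-b))$ respectively, valid for $w\in\C\setminus[0,\infty)$ and $a-b\notin\Z$, with the principal branch of $(-w)^{-a}$ and $(-w)^{-b}$. The implicit restriction $a-b\notin\Z$ is required for the gamma factors $\Ga(b-a)$ and $\Ga(a-b)$ to be finite; the excluded case may be recovered by a standard limiting argument but does not enter into the subsequent computation.

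Next I specialize to $w=x\pm{\rm i}\epsilon$ with $x>1$ and let $\epsilon\to 0^+$. For the upper sign, $-w=-x-{\rm i}\epsilon$ approaches the negative real axis from below, so in the principal branch $\arg(-w)\to-\pi$, which gives $(-w)^{-a}\to\expe^{{\rm i}\pi a}x^{-a}$ and analogously for the exponent $b$. For the lower sign, $-w$ approaches the negative real axis from above, $\arg(-w)\to+\pi$, and one obtains the opposite phase factor $\expe^{-{\rm i}\pi a}x^{-a}$. Simultaneously $1/w\to 1/x\in(0,1)$ from opposite sides; since $1/x$ lies strictly in the interior of the disk of convergence of both tail hypergeometric series, those factors are continuous across the cut and take their principal values at $1/x$ in the limit.

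Substituting these boundary values into the connection formula produces the claimed identity, with the signs of the exponential prefactors matching the $\pm$ in the statement. The main point of care is the branch bookkeeping in the middle step---tracking which side of the negative real axis $-w$ approaches as $w$ crosses the cut $[1,\infty)$ from above or below; this is the only nontrivial computation, and all remaining steps reduce to direct substitution.
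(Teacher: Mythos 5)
Your proof is correct and follows essentially the same route as the paper: the paper likewise starts from the DLMF connection formula linking $w=0$ and $w=\infty$ (stated there via \cite[equation~(15.8.2)]{NIST:DLMF} together with Euler's reflection formula), valid on $\C\setminus[0,\infty)$, and then takes boundary values $z=x\pm{\rm i}\epsilon$, $\epsilon\to0^{+}$, exactly as you do. Your branch bookkeeping for $(-w)^{-a}$, $(-w)^{-b}$ and the continuity of the series at $1/x\in(0,1)$ are both correct, so no changes are needed.
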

\begin{proof}
The connection relation~\cite[equation~(15.8.2)]{NIST:DLMF} and \eqref{Eulerreflect} give
\begin{gather*}
\F{a,b}{c}{z}=\frac{\Ga(c)\Ga(b-a)}{\Ga(b)\Ga(c-a)}
(-z)^{-a}\F{a,a-c+1}{a-b+1}{\frac1z}
\\ \hphantom{\F{a,b}{c}{z}=}
{}+\frac{\Ga(c)\Ga(a-b)}{\Ga(a)\Ga(c-b)}
(-z)^{-b}\F{b,b-c+1}{b-a+1}{\frac1z}\!.
\end{gather*}
This identity holds for all $z\in\C\setminus[0,\infty)$.
Setting $z=x+i\epsilon$, $x>1$, $\epsilon\in\R\setminus\{0\}$, and letting $\epsilon\to0^+$ and $\epsilon\to0^-$,
we obtain the stated result.
\end{proof}

We note that the limiting values of the generalized hypergeometric function on the branch cut have recently been investigated by Karp and Prilepkina~\cite{KarpPrilepkina17}.
Using~\cite[equations~(15.8.3), (15.8.4) and~(15.8.5)]{NIST:DLMF}, one may also obtain alternative
representations of the limiting values of~the hypergeometric function on the branch cut.

\begin{thm}
Let $a,b\in\C$, $c\in\C\setminus-\N_0$, $x\in(1,\infty)$. Then
\begin{gather*}
\F{a,b}{c}{x\pm {\rm i}0}= \frac{
\Ga(c)\Ga(c-a-b)}{\Ga(c-a)\Ga(c-b)}
x^{1-c}\F{a-c+1,b-c+1}{a+b-c+1}{1-x}
\\ \hphantom{\F{a,b}{c}{x\pm {\rm i}0}=}
{}+\expe^{\pm {\rm i}\pi(a+b-c)}\frac{\Ga(c)\Ga(a\!+b\!-c)}{\Ga(a)\Ga(b)}
(x\!-1)^{c-a-b}\F{c-a,c-b}{c\!-a\!-b\!+1}{1\!-x}\!.
\end{gather*}
\end{thm}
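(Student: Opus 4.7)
The plan is to mimic the proof of Theorem~\ref{Theorem1} but use the $z\leftrightarrow 1-z$ connection formula instead of the $z\leftrightarrow 1/z$ one. Specifically, I would start from the standard identity \cite[equation~(15.8.4)]{NIST:DLMF},
\begin{gather*}
\F{a,b}{c}{z}=\frac{\Ga(c)\Ga(c-a-b)}{\Ga(c-a)\Ga(c-b)}\F{a,b}{a+b-c+1}{1-z}\\
\hphantom{\F{a,b}{c}{z}=}
{}+\frac{\Ga(c)\Ga(a+b-c)}{\Ga(a)\Ga(b)}(1-z)^{c-a-b}\F{c-a,c-b}{c-a-b+1}{1-z},
\end{gather*}
which is valid on $\C\setminus[0,\infty)$ with principal branches, and then take the limit $z=x\pm {\rm i}\epsilon$ with $\epsilon\to 0^+$ for $x\in(1,\infty)$.

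On the right-hand side, the two inner hypergeometric functions have argument $1-z$, which tends to the negative real number $1-x$ from either side of the real axis; since $1-x\in\C\setminus[1,\infty)$, these are continuous and the limit simply replaces $1-z$ by $1-x$ in both factors. The only part that depends on the side of approach is the prefactor $(1-z)^{c-a-b}$. Writing $1-z=(1-x)\mp {\rm i}\epsilon$, one has $\arg(1-z)\to\mp\pi$ as $\epsilon\to 0^+$, so
\begin{gather*}
(1-z)^{c-a-b}\longrightarrow (x-1)^{c-a-b}\expe^{\mp {\rm i}\pi(c-a-b)}
=(x-1)^{c-a-b}\expe^{\pm {\rm i}\pi(a+b-c)},
\end{gather*}
which supplies the $\expe^{\pm{\rm i}\pi(a+b-c)}$ factor in the second term of the claimed identity.

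It remains to bring the first term into the stated shape. Apply Euler's transformation \eqref{Euler} to $\F{a,b}{a+b-c+1}{1-x}$, where the hypergeometric $c$-parameter is now $a+b-c+1$: the prefactor becomes $(1-(1-x))^{(a+b-c+1)-a-b}=x^{1-c}$, and the new numerator parameters are $(a+b-c+1)-a=b-c+1$ and $(a+b-c+1)-b=a-c+1$. This yields exactly $x^{1-c}\F{a-c+1,b-c+1}{a+b-c+1}{1-x}$, and combined with the second term above gives the stated formula.

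The proof is essentially bookkeeping once the connection formula is invoked, so I do not anticipate a real obstacle; the only point requiring care is the determination of the correct sign in $\arg(1-z)$ as $z$ approaches the cut from above versus below, so that the exponential factor matches the $\pm$ convention used in $x\pm{\rm i}0$.
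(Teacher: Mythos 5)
Your proof is correct and takes essentially the paper's approach: the paper proves Theorem~\ref{Theorem1} by taking boundary values of the $1/z$ connection formula and then states that this theorem follows in the same way from the $1-z$ connection formula \cite[equation~(15.8.4)]{NIST:DLMF}, which is precisely your argument (with the extra Euler transformation to reach the stated parameters, and a correct determination of $\arg(1-z)\to\mp\pi$). One minor slip: with principal branches that connection formula holds on $\C\setminus((-\infty,0]\cup[1,\infty))$ rather than on $\C\setminus[0,\infty)$, but since you only invoke it in the open upper and lower half-planes this does not affect the argument.
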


\begin{thm}
Let $x\in(1,\infty)$. Then
\begin{gather*}
\F{a,b}{c}{x\pm {\rm i}0}=
\frac{\Ga(c)\Ga(c-a-b)}{\Ga(c-a)\Ga(c-b)}
x^{-a}\F{a,a-c+1}{a+b-c+1}{1-\frac{1}{x}}
\\ \hphantom{\F{a,b}{c}{x\pm {\rm i}0}=}
{}+\expe^{\pm {\rm i}\pi(a+b-c)}\frac{\Ga(c)\Ga(a+b-c)}{\Ga(a)\Ga(b)}
(x-1)^{c-a-b}x^{a-c}
\\\hphantom{\F{a,b}{c}{x\pm {\rm i}0}=+}
{}\times\F{1-a,c-a}{c-a-b+1}{1-\frac{1}{x}}\!.
\end{gather*}
\end{thm}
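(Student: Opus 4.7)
The plan is to mirror the argument used for Theorem~\ref{Theorem1} exactly, but starting from a different connection relation for the Gauss hypergeometric function. Specifically, I would begin from the DLMF identity \cite[equation~(15.8.5)]{NIST:DLMF}, which expresses $\F{a,b}{c}{z}$ as a linear combination of two hypergeometric functions with argument $1-1/z$:
\begin{gather*}
\F{a,b}{c}{z}=\frac{\Ga(c)\Ga(c-a-b)}{\Ga(c-a)\Ga(c-b)} z^{-a}\F{a,a-c+1}{a+b-c+1}{1-\tfrac{1}{z}}
\\
{}+\frac{\Ga(c)\Ga(a+b-c)}{\Ga(a)\Ga(b)} (1-z)^{c-a-b} z^{a-c}\F{1-a,c-a}{c-a-b+1}{1-\tfrac{1}{z}},
\end{gather*}
valid for $z\in\C\setminus[0,\infty)$ with the principal branches of the powers. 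The proof then consists of substituting $z=x\pm {\rm i}\epsilon$ with $x>1$ and $\epsilon>0$, and letting $\epsilon\to 0^+$.

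Next I would check that all of the non-branch factors pass cleanly to the limit. The powers $z^{-a}$ and $z^{a-c}$ use the principal branch and are continuous across the positive real axis from either side, so they limit to $x^{-a}$ and $x^{a-c}$ respectively. The hypergeometric arguments $1-1/z$ limit to $1-1/x\in(0,1)$, strictly inside the unit disc, so the two hypergeometric factors on the right-hand side are analytic in a neighborhood of this point and the limits are just their values at $1-1/x$. Hence these pieces contribute no phase.

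The only delicate step — and therefore the main (mild) obstacle — is the branch tracking of the factor $(1-z)^{c-a-b}$. For $z=x+{\rm i}\epsilon$ with $x>1$ and $\epsilon>0$ small, $1-z=-(x-1)-{\rm i}\epsilon$ lies in the fourth quadrant and its principal argument tends to $-\pi$ as $\epsilon\to 0^+$. Using the principal branch of the power one obtains $(1-z)^{c-a-b}\to \expe^{-{\rm i}\pi(c-a-b)}(x-1)^{c-a-b}=\expe^{{\rm i}\pi(a+b-c)}(x-1)^{c-a-b}$. For $z=x-{\rm i}\epsilon$ the point $1-z$ approaches the negative real axis from above with argument $+\pi$, yielding the conjugate phase $\expe^{-{\rm i}\pi(a+b-c)}(x-1)^{c-a-b}$. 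Combining both cases produces the common factor $\expe^{\pm{\rm i}\pi(a+b-c)}(x-1)^{c-a-b}$ attached to the second term, exactly as stated. Collecting everything yields the claimed identity, with the convergence of the series on the right automatic because $0<1-1/x<1$.
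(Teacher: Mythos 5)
Your proposal is correct and follows essentially the same route as the paper: the paper proves Theorem~\ref{Theorem1} by exactly this boundary-value argument applied to the DLMF connection formula (15.8.2), and then notes that the remaining cut formulas, including this one, follow in the same way from (15.8.3)--(15.8.5); your derivation from (15.8.5) with $z=x\pm{\rm i}\epsilon$, the continuity of $z^{-a}$, $z^{a-c}$ and of the hypergeometric factors at $1-1/x\in(0,1)$, and the phase $\expe^{\pm{\rm i}\pi(a+b-c)}$ extracted from $(1-z)^{c-a-b}$ is precisely that computation. Only trivial slips: the quoted identity is valid for $z\notin(-\infty,0]\cup[1,\infty)$ rather than for all $z\in\C\setminus[0,\infty)$ (harmless, since your points $x\pm{\rm i}\epsilon$ lie in the true domain), and for $z=x+{\rm i}\epsilon$ the point $1-z$ lies in the third, not the fourth, quadrant, though its argument does tend to $-\pi$ as you say.
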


\begin{thm}\label{Theorem4}
Let $x\in(1,\infty)$. Then
\begin{gather*}
\F{a,b}{c}{x\pm {\rm i}0}=
\expe^{\pm {\rm i}\pi a}\frac{\Ga(c)\Ga(b-a)}{\Ga(b)\Ga(c-a)}
(x-1)^{-a}\F{a,c-b}{a-b+1}{\frac1{1-x}}
\\ \hphantom{\F{a,b}{c}{x\pm {\rm i}0}=}
{}+\expe^{\pm {\rm i}\pi b}\frac{\Ga(c)\Ga(a-b)}{\Ga(a)\Ga(c-b)}
(x-1)^b\F{b,c-a}{b-a+1}{\frac1{1-x}}\!.
\end{gather*}
\end{thm}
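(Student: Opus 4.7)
The plan is to mirror the proof of Theorem~\ref{Theorem1}, but starting from the connection formula~\cite[equation~(15.10.21) or~(15.8.5)]{NIST:DLMF} that relates ${}_2F_1(a,b;c;z)$ to two copies of ${}_2F_1$ with argument $1/(1-z)$, namely
\begin{gather*}
\F{a,b}{c}{z}=\frac{\Ga(c)\Ga(b-a)}{\Ga(b)\Ga(c-a)}(1-z)^{-a}\F{a,c-b}{a-b+1}{\frac{1}{1-z}}\\
\hphantom{\F{a,b}{c}{z}=}{}+\frac{\Ga(c)\Ga(a-b)}{\Ga(a)\Ga(c-b)}(1-z)^{-b}\F{b,c-a}{b-a+1}{\frac{1}{1-z}},
\end{gather*}
valid for $z\in\C\setminus[0,\infty)$ with principal values of the powers and of the hypergeometric functions.

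I would then specialize to $z=x\pm{\rm i}\varepsilon$ with $x>1$ and $\varepsilon>0$, and pass to the limit $\varepsilon\to 0^+$. The inner argument satisfies $1/(1-z)\to 1/(1-x)\in(-\infty,0)$, which lies safely in the domain of analyticity $\C\setminus[1,\infty)$ of the hypergeometric function, so both inner ${}_2F_1$'s converge by continuity to $\F{a,c-b}{a-b+1}{\frac1{1-x}}$ and $\F{b,c-a}{b-a+1}{\frac1{1-x}}$ respectively, independently of the sign of $\varepsilon$.

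The only step that needs care is the evaluation of the prefactors $(1-z)^{-a}$ and $(1-z)^{-b}$ on the principal branch. For $\varepsilon>0$, we have $1-z=-(x-1)-{\rm i}\varepsilon$, which has argument tending to $-\pi$ from above, so $(1-z)^{-a}\to(x-1)^{-a}\expe^{{\rm i}\pi a}$; analogously $(1-z)^{-b}\to(x-1)^{-b}\expe^{{\rm i}\pi b}$. For $\varepsilon<0$ the argument of $1-z$ tends to $+\pi$ from below, giving the conjugate phases $\expe^{-{\rm i}\pi a}$ and $\expe^{-{\rm i}\pi b}$. Combining these limits with the connection formula yields the claimed identity.

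The main (and really only) obstacle is the branch bookkeeping for the powers $(1-z)^{-a}$, $(1-z)^{-b}$ under the principal-value convention: one must correctly identify that $1-z$ approaches the negative real axis \emph{from below} when $z\to x+{\rm i}0$ and \emph{from above} when $z\to x-{\rm i}0$, which is the opposite of what naive intuition based on the sign of $\mathop{\rm Im} z$ might suggest. Once that is resolved, the rest is a direct substitution, exactly parallel to the proof of Theorem~\ref{Theorem1}.
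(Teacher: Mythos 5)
Your proposal is correct and is essentially the paper's own route: the paper proves Theorem~\ref{Theorem1} by taking boundary values $z=x\pm{\rm i}\epsilon$, $\epsilon\to0^{+}$, in a connection formula and then states that the remaining cut formulas, including Theorem~\ref{Theorem4}, follow in the same way from \cite[equations~(15.8.3), (15.8.4) and~(15.8.5)]{NIST:DLMF}, which is exactly your argument via (15.8.5) with the correct branch bookkeeping $\arg(1-z)\to\mp\pi$ as $z\to x\pm{\rm i}0$. One remark: your (correct) limit gives $(x-1)^{-b}{\rm e}^{\pm{\rm i}\pi b}$ in the second term, whereas the theorem as printed shows $(x-1)^{b}$; the exponent $-b$ is the right one (consistent with the $x^{-b}$ behaviour as $x\to\infty$ and with Theorem~\ref{Theorem1}), so the printed statement appears to contain a sign typo rather than your proof a gap.
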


\pdfbookmark[1]{References}{ref}
\LastPageEnding

\end{document}